\begin{document}
\newcommand{\M}{{\mathcal M}}
\newcommand{\loc}{{\mathrm{loc}}}
\newcommand{\core}{C_0^{\infty}(\Omega)}
\newcommand{\sob}{W^{1,p}(\Omega)}
\newcommand{\sobloc}{W^{1,p}_{\mathrm{loc}}(\Omega)}
\newcommand{\merhav}{{\mathcal D}^{1,p}}
\newcommand{\be}{\begin{equation}}
\newcommand{\ee}{\end{equation}}
\newcommand{\mysection}[1]{\section{#1}\setcounter{equation}{0}}
\newcommand{\laplace}{\Delta}
\newcommand{\pl}{\laplace_p}
\newcommand{\grad}{\nabla}
\newcommand{\pd}{\partial}
\newcommand{\bo}{\pd}
\newcommand{\csub}{\subset \subset}
\newcommand{\sm}{\setminus}
\newcommand{\ssm}{:}
\newcommand{\diver}{\mathrm{div}\,}
\newcommand{\bea}{\begin{eqnarray}}
\newcommand{\eea}{\end{eqnarray}}
\newcommand{\bean}{\begin{eqnarray*}}
\newcommand{\eean}{\end{eqnarray*}}
\newcommand{\thkl}{\rule[-.5mm]{.3mm}{3mm}}
\newcommand{\cw}{\stackrel{\rightharpoonup}{\rightharpoonup}}
\newcommand{\id}{\operatorname{id}}
\newcommand{\supp}{\operatorname{supp}}
\newcommand{\wlim}{\mbox{ w-lim }}
\newcommand{\mymu}{{x_N^{-p_*}}}
\newcommand{\R}{{\mathbb R}}
\newcommand{\N}{{\mathbb N}}
\newcommand{\Z}{{\mathbb Z}}
\newcommand{\Q}{{\mathbb Q}}
\newcommand{\abs}[1]{\lvert#1\rvert}
\newtheorem{theorem}{Theorem}[section]
\newtheorem{corollary}[theorem]{Corollary}
\newtheorem{lemma}[theorem]{Lemma}
\newtheorem{notation}[theorem]{Notation}
\newtheorem{definition}[theorem]{Definition}
\newtheorem{remark}[theorem]{Remark}
\newtheorem{proposition}[theorem]{Proposition}
\newtheorem{assertion}[theorem]{Assertion}
\newtheorem{problem}[theorem]{Problem}
\newtheorem{conjecture}[theorem]{Conjecture}
\newtheorem{question}[theorem]{Question}
\newtheorem{example}[theorem]{Example}
\newtheorem{Thm}[theorem]{Theorem}
\newtheorem{Lem}[theorem]{Lemma}
\newtheorem{Pro}[theorem]{Proposition}
\newtheorem{Def}[theorem]{Definition}
\newtheorem{Exa}[theorem]{Example}
\newtheorem{Exs}[theorem]{Examples}
\newtheorem{Rems}[theorem]{Remarks}
\newtheorem{Rem}[theorem]{Remark}

\newtheorem{Cor}[theorem]{Corollary}
\newtheorem{Conj}[theorem]{Conjecture}
\newtheorem{Prob}[theorem]{Problem}
\newtheorem{Ques}[theorem]{Question}
\newtheorem*{corollary*}{Corollary}
\newtheorem*{theorem*}{Theorem}
\newcommand{\pf}{\noindent \mbox{{\bf Proof}: }}

\renewcommand{\theequation}{\thesection.\arabic{equation}}
\catcode`@=11 \@addtoreset{equation}{section} \catcode`@=12
\newcommand{\Real}{\mathbb{R}}
\newcommand{\real}{\mathbb{R}}
\newcommand{\Nat}{\mathbb{N}}
\newcommand{\ZZ}{\mathbb{Z}}
\newcommand{\CC}{\mathbb{C}}
\newcommand{\Pess}{\opname{Pess}}
\newcommand{\Proof}{\mbox{\noindent {\bf Proof} \hspace{2mm}}}
\newcommand{\mbinom}[2]{\left (\!\!{\renewcommand{\arraystretch}{0.5}
\mbox{$\begin{array}[c]{c}  #1\\ #2  \end{array}$}}\!\! \right )}
\newcommand{\brang}[1]{\langle #1 \rangle}
\newcommand{\vstrut}[1]{\rule{0mm}{#1mm}}
\newcommand{\rec}[1]{\frac{1}{#1}}
\newcommand{\set}[1]{\{#1\}}
\newcommand{\dist}[2]{$\mbox{\rm dist}\,(#1,#2)$}
\newcommand{\opname}[1]{\mbox{\rm #1}\,}
\newcommand{\mb}[1]{\;\mbox{ #1 }\;}
\newcommand{\undersym}[2]
 {{\renewcommand{\arraystretch}{0.5}  \mbox{$\begin{array}[t]{c}
 #1\\ #2  \end{array}$}}}
\newlength{\wex}  \newlength{\hex}
\newcommand{\understack}[3]{%
 \settowidth{\wex}{\mbox{$#3$}} \settoheight{\hex}{\mbox{$#1$}}
 \hspace{\wex}  \raisebox{-1.2\hex}{\makebox[-\wex][c]{$#2$}}
 \makebox[\wex][c]{$#1$}   }%
\newcommand{\smit}[1]{\mbox{\small \it #1}}
\newcommand{\lgit}[1]{\mbox{\large \it #1}}
\newcommand{\scts}[1]{\scriptstyle #1}
\newcommand{\scss}[1]{\scriptscriptstyle #1}
\newcommand{\txts}[1]{\textstyle #1}
\newcommand{\dsps}[1]{\displaystyle #1}
\newcommand{\dx}{\,\mathrm{d}x}
\newcommand{\dy}{\,\mathrm{d}y}
\newcommand{\dz}{\,\mathrm{d}z}
\newcommand{\dt}{\,\mathrm{d}t}
\newcommand{\dr}{\,\mathrm{d}r}
\newcommand{\du}{\,\mathrm{d}u}
\newcommand{\dv}{\,\mathrm{d}v}
\newcommand{\dV}{\,\mathrm{d}V}
\newcommand{\ds}{\,\mathrm{d}s}
\newcommand{\dS}{\,\mathrm{d}S}
\newcommand{\dk}{\,\mathrm{d}k}

\newcommand{\dphi}{\,\mathrm{d}\phi}
\newcommand{\dtau}{\,\mathrm{d}\tau}
\newcommand{\dxi}{\,\mathrm{d}\xi}
\newcommand{\deta}{\,\mathrm{d}\eta}
\newcommand{\dsigma}{\,\mathrm{d}\sigma}
\newcommand{\dtheta}{\,\mathrm{d}\theta}
\newcommand{\dnu}{\,\mathrm{d}\nu}
\newcommand{\Bint}{\bigintsss}

\def\ga{\alpha}     \def\gb{\beta}       \def\gg{\gamma}
\def\gc{\chi}       \def\gd{\delta}      \def\ge{\epsilon}
\def\gth{\theta}                         \def\vge{\varepsilon}
\def\gf{\phi}       \def\vgf{\varphi}    \def\gh{\eta}
\def\gi{\iota}      \def\gk{\kappa}      \def\gl{\lambda}
\def\gm{\mu}        \def\gn{\nu}         \def\gp{\pi}
\def\vgp{\varpi}    \def\gr{\rho}        \def\vgr{\varrho}
\def\gs{\sigma}     \def\vgs{\varsigma}  \def\gt{\tau}
\def\gu{\upsilon}   \def\gv{\vartheta}   \def\gw{\omega}
\def\gx{\xi}        \def\gy{\psi}        \def\gz{\zeta}
\def\Gg{\Gamma}     \def\Gd{\Delta}      \def\Gf{\Phi}
\def\Gth{\Theta}
\def\Gl{\Lambda}    \def\Gs{\Sigma}      \def\Gp{\Pi}
\def\Gw{\Omega}     \def\Gx{\Xi}         \def\Gy{\Psi}

\newcommand{\cqfd}{\begin{flushright} 
                    $\Box$
                   \end{flushright}}

\renewcommand{\div}{\mathrm{div}}
\newcommand{\red}[1]{{\color{red} #1}}


\title[Hardy spaces and heat kernel regularity]{Hardy spaces and heat kernel regularity}

\author{Baptiste Devyver}
\address{Baptiste Devyver, Department of Mathematics,  Technion - Israel Institute of Technology, Haifa 32000, Israel}
\email{baptiste.devyver@univ-nantes.fr}

 \maketitle
\newcommand{\dnorm}[1]{\thkl #1 \thkl\,}

\begin{abstract}

In this paper, we show the equivalence between the boundedness of the Riesz transform $d\Delta^{-1/2}$ on $L^p$, $p\in (2,p_0)$, and the equality $H^p=L^p$, $p\in(2,p_0)$, in the class of manifold whose measure is doubling and for which the scaled Poincar\'{e} inequalities hold. Here, $H^p$ is a Hardy space of exact $1-$forms, naturally associated with the Riesz transform.

\end{abstract}


\section{Introduction}

In this article, we will be concerned with questions related to the boundedness of the Riesz transform on manifolds. Since the seminal work of Coulhon and Duong \cite{CD}, who gave sufficient conditions on the heat kernel so that the Riesz transform is bounded on $L^p$ for $1<p\leq 2$, several authors have investigated both necessary and sufficient conditions for the boundedness of the Riesz transform on manifolds. For $p>2$, one of the main achievements is the following result due to Auscher, Coulhon, Duong and Hofmann \cite{ACDH}: if the manifold satisfies the scaled Poincar\'{e} inequalities and the Riemannian measure is doubling, then the boundedness of the Riesz transform on $L^q$ for $q\in (2,p)$ is equivalent to the following bounds on the gradient of the heat kernel: for every $q\in (2,p)$,

\begin{equation}\label{grad}
||\nabla e^{-t^2\Delta}||_{q,q}\leq \frac{C_q}{t},\qquad\forall t>0.
\end{equation}
However, the question to find meaningful geometric conditions so that the gradient estimates for the heat kernel \eqref{grad} hold is a difficult problem, and not much is known about it, although some progress has been recently made in the understanding of inequalities that are stronger than \eqref{grad} -- the so-called Gaussian estimates for the heat kernel of the Hodge Laplacian acting on $1-$forms (see \cite{CZ}, \cite{D}). See also \cite{AC} where the gradient estimates \eqref{grad} are proved to be equivalent under some conditions to reverse H\"{o}lder inequalities for the gradient of harmonic functions).\\

Very recently, another way to tackle the problem of boundedness of the Riesz transform has been investigated: it consists in introducing Hardy spaces of forms $H^p$ for $1\leq p\leq \infty$, which are adapted to the problem in the sense that the Riesz transform is \textit{always} bounded from  $H^p$ to $H^p$ for every $1\leq p\leq \infty$. Several authors have independantly performed such a construction: see \cite{AMR} for the case of Hardy spaces associated the Hodge Laplacian on a manifold, and \cite{HLMMY} for the case of Hardy spaces associated to second-order elliptic operators on $\R^n$. Let us emphasize that the construction of the Hardy spaces requires that the Riemannian measure be doubling. A natural question (asked by the authors in \cite{AMR}) is the following:

\begin{Prob}

Let $1<p<\infty$. Under which conditions $H^p=L^p$?

\end{Prob}
It is shown in \cite{AMR}, Corollary 1.2 that if the Riemannian measure is doubling, then for every $2\leq p<\infty$, 

$$L^p\subset H^p.$$
Consequently, for every $2\leq p<\infty$, the Riesz transform is bounded $L^p\rightarrow H^p$. But it could happen \textit{a priori} that this Hardy space $H^p$ is ``too big", that is that one could find an intermediate space $L^p\varsubsetneqq \tilde{H}^p \varsubsetneqq H^p$, on which the Riesz transform is bounded. One of the main results of this article provides a hint that this should not happen, and that indeed $H^p$ is the correct space to be considered. The result can be very roughly stated as follows:

\begin{Thm}\label{main}{\em (Main result)} Assume that $M$ is a connected, complete, non-compact Riemannian manifold such that the Riemannian measure is doubling and the scaled Poincar\'{e} inequalities hold. Then for every $2<p<\infty$, the following are equivalent:

\begin{enumerate}

 \item For every $q\in (2,p)$, $H^q=L^q$ with equivalent norms.
 
 \item For every $q\in (2,p)$, the Riesz transform $d\Delta^{-1/2}$ is bounded on $L^q$.

\end{enumerate}
If one of these two equivalent conditions is satisfied, then for every $q\in (2,p)$,

$$||u||_q\simeq ||u||_{H^q}\simeq ||d\Delta^{-1/2}u||_q,\qquad \forall u\in C_0^\infty(M).$$

\end{Thm}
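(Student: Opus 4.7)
\emph{Proof plan.}

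\textbf{Easy direction $(1)\Rightarrow(2)$.} The Hardy space $H^p$ of \cite{AMR} is built precisely so that the Riesz transform $d\Delta^{-1/2}$ is bounded from $L^p$ into $H^p$ for every $p\in[2,\infty)$ (this is the content of the discussion immediately preceding the theorem, combined with the inclusion $L^p\subset H^p$). If $(1)$ holds, then for every $q\in(2,p)$ this boundedness, together with the norm equivalence $H^q=L^q$, gives at once the boundedness of $d\Delta^{-1/2}$ on $L^q$, as well as the final chain $||u||_q\simeq ||u||_{H^q}\simeq ||d\Delta^{-1/2}u||_q$.

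\textbf{Hard direction $(2)\Rightarrow(1)$: reduction.} I would first invoke the Auscher--Coulhon--Duong--Hofmann theorem \cite{ACDH}, which, under doubling and scaled Poincar\'e, converts the $L^q$ boundedness of $d\Delta^{-1/2}$ (valid for all $q\in(2,p)$) into the gradient heat kernel bound \eqref{grad} on the same range of exponents. Since $L^q\subset H^q$ is already in \cite{AMR}, the task reduces to proving $H^q\subset L^q$ for every $q\in(2,p)$, now working with doubling, scaled Poincar\'e, and \eqref{grad}.

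\textbf{Main step.} To establish $H^q\hookrightarrow L^q$, I would use a molecular decomposition of $H^q$: every $\omega\in H^q$ can be written as $\omega=\sum_k \lambda_k m_k$ with $\{\lambda_k\}\in\ell^q$ controlled by $||\omega||_{H^q}$, where each molecule $m_k$ is an exact $1$-form of the form $d(b_k)$, built from the heat semigroup applied to a localised function and satisfying suitable moment cancellation on dyadic annuli of a ball. The hypothesis \eqref{grad}, combined with Davies--Gaffney off-diagonal estimates for $e^{-t^2\Delta}$ (which follow from doubling and Poincar\'e), should yield a uniform bound $||m_k||_q\lesssim 1$; summation then produces $||\omega||_q\lesssim ||\omega||_{H^q}$, as desired.

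\textbf{Main obstacle.} The delicate point is that the molecules must themselves be \emph{exact} $1$-forms in order to match the structure of $H^q$, and one only has access to the $L^q\to L^q$ gradient estimate \eqref{grad} rather than pointwise Gaussian bounds on $\nabla e^{-t\Delta}$. Making the annular off-diagonal summation converge under this weaker information is the core technical difficulty: it forces one to work with a tent-space realisation of $H^q$ and to perform a careful $L^2\to L^q$ interpolation across dyadic annuli, which is precisely the step where doubling, scaled Poincar\'e, and \eqref{grad} come together decisively.
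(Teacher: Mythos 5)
Your reduction is the same as the paper's: the implication $(1)\Rightarrow(2)$ via $L^q\subset H^q_0$ and the $H^q_0\to H^q_{1,d}$ boundedness of $d\Delta^{-1/2}$, and then, for $(2)\Rightarrow(1)$, the use of the easy part of Theorem 1.3 in \cite{ACDH} to convert $L^q$ boundedness of the Riesz transform into the gradient estimates $(\nabla_q)$, so that everything rests on proving $H^q_{1,d}\hookrightarrow \overline{\mathcal{R}(d)\cap L^q}^{L^q}$ under \eqref{D}, \eqref{P} and $(\nabla_p)$. The gap is in your main step. You propose a molecular decomposition of $H^q$ with $\ell^q$ coefficients and uniformly $L^q$-bounded exact molecules, but no such decomposition is available (or expected) in the range $q>2$: atomic/molecular decompositions of the tent spaces, and hence of the Hardy spaces of \cite{AMR}, are a feature of the range $p\leq 1$ (extended to $1<p\leq 2$ by interpolation/duality), and for $p>2$ the spaces $H^p_{1,d}$ are defined through the conical square function $\mathcal{Q}_{d^\star,\psi}$ with no molecular description. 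Moreover, even granting such a decomposition, the final summation would not close: uniform bounds $\|m_k\|_q\lesssim 1$ together with $\{\lambda_k\}\in\ell^q$ do not give $\|\sum_k\lambda_k m_k\|_q\lesssim\|\{\lambda_k\}\|_{\ell^q}$ for $q>1$ without an additional almost-orthogonality or bounded-overlap argument, which is exactly the information a molecular theory for $q>2$ would have to encode and which you do not supply.

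What the paper does instead is to avoid any decomposition of $\omega$ itself: it writes the identity on $H^2_{1,d}$ through a Calder\'on reproducing formula $c\,\omega=\mathcal{S}\mathcal{Q}_N\omega$, where $\mathcal{Q}_N\omega=(t^2\Delta)^N e^{-t^2\Delta}(t d^\star\omega)$ maps $\omega$ into a tent space and $\mathcal{S}H=\int_0^\infty t\nabla e^{-t^2\Delta}H_t\,\frac{dt}{t}$ maps back, and then proves $\|\mathcal{S}\mathcal{Q}_N\omega\|_q\leq C\|\omega\|_{H^q_{1,d}}$ (Theorem \ref{main_est}). This is done by a good-$\lambda$/maximal function scheme in the spirit of \cite{ACDH} and \cite{AC}: with $A_r=I-(I-e^{-r^2\Delta})^n$ one proves, on every ball $B$ of radius $r$, an $L^2$ estimate for $\mathcal{S}(I-A_r)F$ in terms of maximal functions of conical square functions $\mathcal{A}_\gamma F^j$ (using Davies--Gaffney estimates and a Taylor expansion killing the first $n$ moments), and an $L^q$ estimate for $\mathcal{S}A_rF$ in terms of $\bigl(\mathcal{M}(|\mathcal{S}F|^2)\bigr)^{1/2}$ (using $(\nabla_q)$ via (3.12) of \cite{ACDH}); Proposition 3.2 and Corollary 3.3 of \cite{AC} then yield the $L^q$ bound, and the $T^{2,q}$ norms of the $F^j$ are comparable to $\|\omega\|_{H^q_{1,d}}$ by the aperture-independence of tent space norms and the $\psi$-independence of the Hardy norms. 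If you want to repair your argument, this reproducing-formula plus good-$\lambda$ route is the one that actually exploits $(\nabla_q)$, doubling and Poincar\'e in the way your last paragraph anticipates.
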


\begin{Rem}
{\em

The precise  formulation of Theorem \ref{main} is given in Corollary \ref{main2}. Here, we just mention that the space $H^q$ is a Hardy space of exact $1-$forms, and the equality $H^q=L^q$ should be understood as the fact that $H^q$ is equal to the closure in $L^q$ of the space of $L^2\cap L^q$ exact $1-$forms.

}
\end{Rem}
The last part of Theorem \ref{main} is reminiscent of the classical result that on $\R^n$, if $p\geq1$,

$$||u||_{H^p}\simeq ||d\Delta^{-1/2}u||_p$$
(see \cite{St}, Proposition 3, p.123), and supports the claim that indeed $H^p$ is the good space to be considered. Let us further mention that in general, one has only

$$||u||_{H^q}\simeq ||d\Delta^{-1/2}u||_{H^q},\qquad \forall u\in C_0^\infty(M)$$
(see \cite{AMR}, Theorem 5.11). We ask the following open question:

\begin{Prob}

Does the result of Theorem \ref{main} remain true without the assumption that $M$ satisfies the scaled Poincar\'{e} inequalities?

\end{Prob}
The proof of Theorem \ref{main} consists in showing that the gradient estimates for the heat kernel \eqref{grad} are sufficient to have the equality $H^p=L^p$, and relies heavily on techniques developped in \cite{ACDH}. Let us mention that it is claimed in \cite{AMR}, Theorem 8.5 that $H^p=L^p$ if the heat kernel of the Hodge Laplacian acting on differential $1-$forms satisfies a Gaussian estimate -- in fact, the proof relies on an unpublished manuscript of Auscher, Duong and McIntosh. Under the additional hypothesis that the heat kernel satisfies Gaussian upper-bounds, we can recover this result as a corollary of Theorem \ref{main}.\\

The plan of this article is as follows: in Section 2, we introduce the notations and definitions that will be needed (in particular, we recall the definition of the Hardy spaces). In Section 3, we present our results in greater details. In Section 4, we give the proof of a result concerning the area functional. In Section 5, we prove Theorem \ref{main}. 


\section{Preliminaries}

In this section, we recall some definitions and notations about the tent spaces and Hardy spaces for the Laplacian and the Dirac operator. References for this are \cite{CMS} and \cite{AMR}. 

\subsection{Notations and definitions}

For two positive functions $f$ and $g$, we write 

$$f\simeq g$$
if there is a constant $C$ such that $C^{-1}g\leq f\leq Cg$.\\ 

In all the article, $(M,g)$ will be a complete Riemannian manifold. We will denote by $\mathrm{d}x$ the Riemannian measure on $M$. The measure of a measurable set $E$ will be simply denoted by $|E|$. For $x\in M$ and $r>0$, we denote by $V(x,r)$ the measure of the geodesic ball $B(x,r)$ of center $x$ and radius $r$. We will say that the measure on $M$ is \textit{doubling} if there exists a constant $C>0$ such that

\begin{equation}\label{D}\tag{D}
V(x,2r)\leq CV(x,r),
\end{equation}
for every $x\in M$ and every $r>0$. If $\mu$ is doubling, then there exists $\kappa>0$ and $C$ such that

$$V(x,\theta r)\leq C\theta^\kappa V(x,r),$$
for every $x\in M$, $r>0$ and $\theta>1$. We will denote by $\beta$ the smallest integer strictly greater than $\frac{\kappa}{2}$. The fact that the measure is doubling has also the following consequence, which is classical:

\begin{Lem}\label{equiv_vol}

Assume \eqref{D}. Then for every $\varepsilon>0$, there is a constant $C_\varepsilon$ such that

$$C_\varepsilon^{-1}V(x,t)\leq V(y,t)\leq C_\varepsilon V(x,t),$$
for every $x, y\in M$ and $t>0$ such that $\frac{d(x,y)}{t}\leq \varepsilon$.

\end{Lem}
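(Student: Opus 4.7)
The plan is to reduce this to the polynomial-volume-growth consequence of \eqref{D} that is stated just above the lemma, namely $V(x,\theta r)\leq C\theta^{\kappa} V(x,r)$ for $\theta>1$.

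First I would fix $x,y\in M$ and $t>0$ with $d(x,y)\leq \varepsilon t$, and observe via the triangle inequality that
\[
B(x,t)\subset B(y,(1+\varepsilon)t)\quad\text{and}\quad B(y,t)\subset B(x,(1+\varepsilon)t),
\]
since any $z\in B(x,t)$ satisfies $d(y,z)\leq d(y,x)+d(x,z)<\varepsilon t + t=(1+\varepsilon)t$, and symmetrically. Taking measures gives
\[
V(x,t)\leq V(y,(1+\varepsilon)t),\qquad V(y,t)\leq V(x,(1+\varepsilon)t).
\]

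Next I would apply the polynomial growth bound with $\theta=1+\varepsilon>1$ and $r=t$ to the right-hand sides, obtaining
\[
V(y,(1+\varepsilon)t)\leq C(1+\varepsilon)^{\kappa}V(y,t),\qquad V(x,(1+\varepsilon)t)\leq C(1+\varepsilon)^{\kappa}V(x,t).
\]
Combining these with the inclusions above yields
\[
V(x,t)\leq C(1+\varepsilon)^{\kappa}V(y,t)\quad\text{and}\quad V(y,t)\leq C(1+\varepsilon)^{\kappa}V(x,t),
\]
so the choice $C_{\varepsilon}:=C(1+\varepsilon)^{\kappa}$ works, completing the proof.

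There is really no obstacle here: the only ingredients are the triangle inequality and the doubling-induced polynomial bound, both already in hand. The only mild point to watch is that $1+\varepsilon>1$ so that the polynomial bound is applicable; if one preferred, one could absorb the $\varepsilon\to 0$ case trivially by noting $V(x,t)=V(y,t)$ when $x=y$, but since the stated bound only needs $d(x,y)/t\le \varepsilon$ and the constants are allowed to depend on $\varepsilon$, no such refinement is required.
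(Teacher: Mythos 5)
Your proof is correct: the inclusions $B(x,t)\subset B(y,(1+\varepsilon)t)$ and $B(y,t)\subset B(x,(1+\varepsilon)t)$ combined with the bound $V(\cdot,\theta t)\leq C\theta^{\kappa}V(\cdot,t)$ for $\theta=1+\varepsilon>1$ give the stated comparability with $C_\varepsilon=C(1+\varepsilon)^{\kappa}$. The paper states this lemma without proof as a classical consequence of \eqref{D}, and your argument is precisely the standard one it has in mind.
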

We will say that $M$ satisfies the \textit{scaled Poincar\'{e} inequalities} if there is a constant $C$ such that for every ball $B$ of radius $r$, and every function $f\in C^\infty(B)$,

\begin{equation}\label{P}\tag{P}
\int_B|f-f_B|^2\leq Cr^2\int_B|\nabla f|^2,
\end{equation}
where $f_B=\frac{1}{|B|}\int_B f$ is the average of $f$ on $B$. It is known that \eqref{D} together with \eqref{P} are equivalent to the so-called Li-Yau estimates for the heat kernel:

\begin{equation}\label{LY}\tag{LY}
\frac{C_1}{V(x,t)}e^{-c_1\frac{d^2(x,y)}{t}}\leq e^{-t\Delta}(x,y)\leq \frac{C_2}{V(x,t)}e^{-c_2\frac{d^2(x,y)}{t}},
\end{equation}
for every $x,y\in M$ and $t>0$. We will also consider the Gaussian estimates for the heat kernel associated to the Hodge Laplacian $\Delta_k=dd^\star+d^\star d$, acting on $k-$forms:

\begin{equation}\label{G_k}\tag{$G_k$}
|e^{-t\Delta_k}(x,y)|\leq \frac{C}{V(x,t)}e^{-\frac{d^2(x,y)}{t}},
\end{equation}
for every $x,y\in M$ and $t>0$. In particular, \eqref{D} together with \eqref{P} imply \eqref{G_k} for $k=0$. We now introduce the $L^p$ gradient estimates for the heat kernel:

\begin{equation}\label{grad_p}\tag{$\nabla_p$}
||\nabla e^{-t^2\Delta}||_{p,p}\leq \frac{C}{t},\qquad\forall t>0.
\end{equation}
For more details about inequality \eqref{grad_p}, see \cite{ACDH}. We simply remark that, according to \cite{ACDH}, Gaussian estimates for the heat kernel of both $\Delta$ and $\Delta_1$ (that is, \eqref{G_k} for $k=0,1$), imply that \eqref{grad_p} holds for every $2\leq p<\infty$. Although recently some progress has been made in finding conditions under which \eqref{G_k} for $k=1$ holds (see \cite{CZ}, \cite{D}), the corresponding problem for the weaker gradient estimates \eqref{grad_p} is so far completely open.\\

We recall some $L^2$ off-diagonal estimates -- the so-called ``Davies-Gaffney estimates" -- for the gradient of the heat kernel (these estimates originates, in the case of the heat kernel itself, with the works of Davies and Gaffney, see \cite{ACDH} for references).

\begin{Lem}\label{L2-off}

There exists two positive constants $C$ and $c$ such that for every subsets $E,F\subset M$ and every $t>0$,

$$||\chi_F t\nabla e^{-t^2\Delta}\chi_E||_{2,2}\leq C e^{-c\frac{d^2(E,F)}{t^2}}.$$

\end{Lem}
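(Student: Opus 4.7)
The plan is to adapt Davies' exponential multiplier method to the operator $t\nabla e^{-t^2\Delta}$. Fix a smooth, bounded, 1-Lipschitz weight $\phi\colon M\to[0,\infty)$ with $\phi\equiv 0$ on $E$ and $\phi\geq d(E,F)$ on $F$ (for instance a mollification of $x\mapsto\min(d(x,E),d(E,F))$), and a parameter $\alpha>0$. Setting $s=t^2$ and $u_s=e^{-s\Delta}u$ for $u\in L^2$ supported in $E$, I introduce the weighted energies
\[
I(s):=\int_M e^{2\alpha\phi}u_s^2\dx,\qquad J(s):=\int_M e^{2\alpha\phi}|\nabla u_s|^2\dx.
\]
The strategy is to establish $sJ(s)\leq C\|u\|_2^2\,e^{c_0\alpha^2 s}$ uniformly; since $\phi\geq d(E,F)$ on $F$, this yields $\|\chi_F\,t\nabla e^{-t^2\Delta}u\|_2\leq\sqrt{sJ(s)}\,e^{-\alpha d(E,F)}$, and the lemma then follows by optimization over $\alpha>0$.

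First, I derive an integrated control of $J$. Differentiating $I$ in $s$ (using $\partial_s u_s=-\Delta u_s$) and integrating by parts gives $I'(s)+2J(s)=-4\alpha\int_M e^{2\alpha\phi}u_s\,\nabla\phi\cdot\nabla u_s\dx$, and a Cauchy--Schwarz absorption (using $|\nabla\phi|\leq 1$) produces $I'(s)+J(s)\leq 4\alpha^2 I(s)$. Since $\phi\equiv 0$ on $\supp u$, Gronwall's lemma gives $I(s)\leq \|u\|_2^2\,e^{4\alpha^2 s}$, and integrating the energy inequality over $[s/2,s]$ yields $\int_{s/2}^{s}J(\sigma)\,\mathrm{d}\sigma\leq\|u\|_2^2\,e^{4\alpha^2 s}$. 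To upgrade this to a pointwise bound I differentiate $J$ itself: a second integration by parts yields $J'(s)=-2\int e^{2\alpha\phi}(\Delta u_s)^2\dx+4\alpha\int e^{2\alpha\phi}(\Delta u_s)\,\nabla\phi\cdot\nabla u_s\dx$, and another Cauchy--Schwarz gives the quasi-monotonicity $J'(s)\leq 4\alpha^2 J(s)$; equivalently $e^{-4\alpha^2 s}J(s)$ is non-increasing, so $J(s)\leq e^{2\alpha^2 s}J(\sigma)$ for all $\sigma\in[s/2,s]$.

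Averaging this last inequality in $\sigma$ and combining with the integrated control gives $(s/2)J(s)\leq e^{2\alpha^2 s}\int_{s/2}^{s}J(\sigma)\,\mathrm{d}\sigma\leq\|u\|_2^2\,e^{6\alpha^2 s}$, which is the promised estimate. Since $sJ(s)=\|e^{\alpha\phi}\,t\nabla e^{-t^2\Delta}u\|_2^2$, restriction to $F$ and optimization of $3\alpha^2 t^2-\alpha d(E,F)$ in $\alpha$ at $\alpha^{*}=d(E,F)/(6t^2)$ completes the proof (one may take $c=1/12$). The main technical obstacle is exactly this passage from an integrated bound on $J$ to a pointwise one: the first energy inequality only directly controls $J$ on average, and it is the additional quasi-monotonicity, obtained from the same integration-by-parts mechanism applied one order higher, that makes the upgrade possible.
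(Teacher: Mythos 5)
Your argument is correct, and it is worth noting that the paper does not prove this lemma at all: it simply cites \cite{ACDH}, p.~21, where the gradient Gaffney estimate is obtained by a Caccioppoli-type cutoff argument, namely writing $\int \eta^2|\nabla u_t|^2=\int \eta^2 u_t\Delta u_t-2\int \eta u_t\nabla\eta\cdot\nabla u_t$, absorbing, and invoking the Davies--Gaffney bounds for $e^{-t\Delta}$ and for $t\Delta e^{-t\Delta}$ (the latter coming from analyticity of the semigroup). Your route is genuinely different and arguably more self-contained: you run Davies' exponential-weight perturbation directly on the weighted energy $J(s)=\int e^{2\alpha\phi}|\nabla u_s|^2$, and the key observation -- that the same integration-by-parts mechanism one order higher yields the quasi-monotonicity $J'\leq C\alpha^2 J$, which upgrades the integrated bound $\int_{s/2}^s J\leq \|u\|_2^2e^{4\alpha^2 s}$ to a pointwise one -- is exactly what replaces the appeal to the $t\Delta e^{-t\Delta}$ off-diagonal estimate; your computations ($I'+J\leq 4\alpha^2 I$, the formula for $J'$, the averaging step, and the optimization $\alpha^*=d(E,F)/(6t^2)$ giving $c=1/12$) all check out. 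Two small technical points you should acknowledge rather than hide in the word ``mollification'': on a general manifold one cannot smooth $\min(d(\cdot,E),d(E,F))$ while keeping the Lipschitz constant exactly $1$, so either work directly with the bounded Lipschitz weight (the chain rule and $|\nabla\phi|\leq 1$ a.e.\ suffice for all the identities used) or accept constant $1+\varepsilon$ and let $\varepsilon\to 0$; and the differentiations of $I$, $J$ and the integrations by parts require $u_s,\nabla u_s,\Delta u_s,\nabla\Delta u_s\in L^2$ together with completeness (cutoff functions), all of which hold by spectral theory since $\Delta u_s$ lies in the form domain. With these routine justifications supplied, your proof stands on its own, whereas the paper's reference requires importing the analyticity-based bound on $t\Delta e^{-t\Delta}$.
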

For a proof, see \cite{ACDH}, p.21. Finally, we state without proof a technical result, which is a slight variation on Lemma 3.2 in \cite{ACDH} and is proved using $L^p$ off-diagonal estimates for the gradient of the heat kernel. For a ball $B$ of radius $r$, let $C_1=4B$, and for $i\geq2$, $C_i=2^{i+1}B\setminus 2^iB$. For a given function $f$, define $f_i=\chi_{C_i}f$.

\begin{Lem}[\cite{ACDH}, Lemma 3.2]\label{ultra}

Assume \eqref{D}, \eqref{G_k} for $k=0$ and \eqref{grad_p}. Then for every $q\in [2,p)$,

\begin{equation}\label{1_ultra_p}
\left(\frac{1}{|B|}\int_B |t\nabla e^{-t^2\Delta} f_1|^q\right)^{1/q}\leq C\left(\frac{1}{|4B|}\int_{4B}f^2\right)^{1/2},
\end{equation}
and

\begin{equation}\label{i_ultra_p}
\left(\frac{1}{|B|}\int_B |t\nabla e^{-t^2\Delta} f_i|^q\right)^{1/q}\leq Ce^{-c4^i\left(\frac{r}{t}\right)^2}\left(\frac{1}{|2^{i+1}B|}\int_{C_i}f^2\right)^{1/2}.
\end{equation}
Furthermore, for $q=2$ the corresponding inequalities hold without assuming \eqref{grad_p}.

\end{Lem}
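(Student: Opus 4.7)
The plan is to adapt the argument of \cite{ACDH}, Lemma 3.2, to the present formulation (semigroup parameter $t^2$ in place of $t$). I split the proof into the base case $q=2$, which uses only Lemma \ref{L2-off}, and the case $q\in(2,p)$, which additionally uses \eqref{grad_p}.

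For $q=2$, I apply Lemma \ref{L2-off} with $E=C_i$ and $F=B$; since $f_i=\chi_{C_i}f$,
$$\|t\nabla e^{-t^2\Delta}f_i\|_{L^2(B)}\leq Ce^{-c\,d(B,C_i)^2/t^2}\|f\|_{L^2(C_i)}.$$
For $i\geq 2$ one has $d(B,C_i)\geq(2^{i-1}-1)r$, so the exponential factor is at most $e^{-c'4^i(r/t)^2}$. Dividing by $|B|^{1/2}$ and normalizing the right-hand side by $|2^{i+1}B|^{1/2}$ produces a volume ratio $|2^{i+1}B|^{1/2}/|B|^{1/2}\leq C\,2^{i\kappa/2}$ coming from \eqref{D}; this polynomial factor is absorbed into the Gaussian after slightly shrinking $c$. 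The case $i=1$ is immediate.

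For $q\in(2,p)$, I factor
$$t\nabla e^{-t^2\Delta}f_i=\bigl(t\nabla e^{-t^2\Delta/2}\bigr)\bigl(e^{-t^2\Delta/2}f_i\bigr)$$
and use two ingredients. First, interpolating \eqref{grad_p} with the trivial $L^2$ bound from Lemma \ref{L2-off} (taking $E=F=M$) yields
$$\|\chi_F\,t\nabla e^{-t^2\Delta/2}\chi_E\|_{q,q}\leq Ce^{-c\,d(E,F)^2/t^2},\qquad q\in[2,p].$$
Second, Cauchy--Schwarz applied to the heat kernel, using \eqref{G_k} with $k=0$, gives the pointwise bound
$$\bigl|e^{-t^2\Delta/2}f_i(x)\bigr|\leq \frac{C}{V(x,t)^{1/2}}e^{-c\,d(x,C_i)^2/t^2}\|f\|_{L^2(C_i)}.$$
Writing $g_i=e^{-t^2\Delta/2}f_i$ and decomposing $g_i=\sum_{j\geq 0}\chi_{A_j}g_i$ over dyadic annuli $A_j=2^{j+1}B\setminus 2^jB$ around $B$ (with $A_0=2B$), I apply the $L^q$ off-diagonal bound to the gradient factor on each piece and use the pointwise bound to control $\|\chi_{A_j}g_i\|_q$. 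The product of the two Gaussian decays (in the distances $d(B,A_j)$ and $d(A_j,C_i)$) renders the series convergent and forces the leading factor $e^{-c\,4^i(r/t)^2}$; the volume normalizations combine via \eqref{D} and Lemma \ref{equiv_vol} to yield the required $|2^{i+1}B|^{-1/2}$ on the right. For $f_1$ the off-diagonal decay of $g_1$ is unavailable, but the $L^2$-to-$L^\infty$ ultracontractivity of $e^{-t^2\Delta/2}$ derived from \eqref{G_k} for $k=0$, combined with $\supp(f_1)\subset 4B$, delivers directly the required normalization $|B|^{1/q}|4B|^{-1/2}$.

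The main technical obstacle is the bookkeeping of the volume normalizations: the occurrences of $V(x,t)$ for $x$ in different annuli $A_j$ must be turned uniformly into $|2^{i+1}B|$. This requires repeated use of \eqref{D} and Lemma \ref{equiv_vol}, with separate attention to the regimes $t\leq r$ and $t>r$ so that the Gaussian factor dominates any polynomial volume growth appearing in the summation.
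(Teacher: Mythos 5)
The paper itself states this lemma without proof, quoting it from \cite{ACDH} (Lemma 3.2), so there is no internal proof to compare with; your overall architecture---Davies--Gaffney for $q=2$, the factorization $t\nabla e^{-t^2\Delta}=\bigl(t\nabla e^{-t^2\Delta/2}\bigr)e^{-t^2\Delta/2}$ with $L^q$ off-diagonal bounds for the first factor obtained by interpolating \eqref{grad_p} with Lemma \ref{L2-off}, pointwise bounds for the second factor from \eqref{G_k} with $k=0$, and a summation over annuli---is indeed the ACDH machinery the paper alludes to, and it does produce \eqref{i_ultra_p} in the regime $t\lesssim 2^i r$ where the Gaussian $e^{-c4^i(r/t)^2}$ genuinely decays.

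There is, however, a genuine gap in your treatment of the local estimate \eqref{1_ultra_p} for $q>2$: the claim that ultracontractivity of $e^{-t^2\Delta/2}$ ``delivers directly the required normalization $|B|^{1/q}|4B|^{-1/2}$'' is not correct. Ultracontractivity at time $t^2/2$ yields $\|e^{-t^2\Delta/2}f_1\|_\infty\lesssim V(\cdot,t)^{-1/2}\|f\|_{L^2(4B)}$, so after taking the $L^q(B)$ average one is left with a factor of order $\bigl(|4B|/V(x_B,t)\bigr)^{1/2-1/q}\lesssim (r/t)^{\kappa(1/2-1/q)}$, which blows up as $t/r\to 0$ and, unlike in \eqref{i_ultra_p}, cannot be absorbed since \eqref{1_ultra_p} carries no Gaussian factor. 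This loss is not an artifact of your method: on $\R^n$, take $f(x)=t^{-n/2}\phi\bigl((x-x_B)/t\bigr)$ with $\phi$ a fixed bump; parabolic scaling gives $t\nabla e^{-t^2\Delta}f=t^{-n/2}\bigl(\nabla e^{-\Delta}\phi\bigr)(\cdot/t)$, so the left side of \eqref{1_ultra_p} is $\gtrsim r^{-n/q}\,t^{n(1/q-1/2)}$ while the right side is $\simeq r^{-n/2}$, and the ratio $(r/t)^{n(1/2-1/q)}$ is unbounded for $q>2$. Hence the local estimate cannot be proved uniformly in $t\le r$ as you assert; it holds as written only for $t\simeq r$ (the setting of \cite{ACDH}, where the semigroup time is $r^2$), or with an extra factor $(r/t)^{\kappa(1/2-1/q)}$ (which is in fact harmless for the way the lemma is used in Lemma \ref{Lp_off_diago} and Proposition \ref{off-diago}, where it is always paired with $e^{-c(r/t)^2}$ or an integrable weight in $t$). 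A related, smaller caveat: your ``absorption of $2^{i\kappa/2}$ into the Gaussian'' requires $4^i(r/t)^2\gtrsim i$, i.e. $t\lesssim 2^i r$; for larger $t$ the exponential is essentially $1$, and your closing remark about treating ``the regimes $t\le r$ and $t>r$'' separately is not an argument---with only \eqref{grad_p} and no pointwise gradient kernel bounds, your ingredients give at best a polynomial loss there.
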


\subsection{Tent spaces}

For $x\in M$ and $\alpha>0$, the cone $\Gamma_\alpha(x)$ of aperture $\alpha$ and vertex $x$ is defined by

$$\Gamma_\alpha(x):=\{(t,y)\in (0,\infty)\times M\,:\,y\in B(x,\alpha t)\}.$$
For $\alpha=1$, we will denote $\Gamma(x)$ instead of $\Gamma_1(x)$. For a closed set $F\subset M$, let $\mathcal{R}(F)$ be the union of all cones $\Gamma(x)$ with vertex $x\in F$. If $O\subset M$ is an open set, the tent over $O$, denoted $T(O)$, is the complement in $M\times (0,\infty)$ of $\mathcal{R}(M\setminus O)$.\\

For a Hilbert space $\mathcal{H}$, and a family $F=(F_t)_{t>0}$ of measurable functions from $M$ to $\mathcal{H}$, define

$$\mathcal{A}_\alpha F(x)=\left(\int_{\Gamma_\alpha(x)} |F(t,y)|^2\frac{dy}{V(x,t)}\frac{dt}{t}\right)^{1/2}.$$
Here, $|F(t,y)|$ is the norm of $F(t,y):=F_t(y)$ in $\mathcal{H}$. When $\alpha=1$, we will write $\mathcal{A}$ instead of $\mathcal{A}_1$. For $1\leq p<\infty$, we define $T^{2,p}(\mathcal{H})$ the set (modulo equivalent classes) of such families $F$ such that $\mathcal{A}F\in L^p(M)$, equipped with the norm $||F ||_{T^{2,p}(\mathcal{H})}=||\mathcal{A}F||_{L^p(M)}$. It is known (see \cite{CMS}) that if the measure on $M$ is doubling \eqref{D}, then a different choice of $\alpha$ gives rise to equivalent norms, that is

$$||\mathcal{A}_\alpha F||_p\simeq ||\mathcal{A}F||_p,$$
for every $1\leq p<\infty$. From now on, we will write $T^{2,p}$ instead of $T^{2,p}(\mathcal{H})$ (in practice, the choice of $\mathcal{H}$ will be obvious). Let us remark that in the case $p=2$, if one assumes the doubling property \eqref{D}, then the norm in $T^{2,2}$ is equivalent to

$$||F||_{T^{2,2}}\simeq \left(\int_0^\infty \int_M |F(t,x)|^2\,dx\frac{dt}{t}\right)^{1/2}.$$
In fact, one has the following more precise result:

\begin{Lem}\label{tent_sup}

Assume \eqref{D}. Then for every $F$,

\begin{equation}\label{T22_equiv}
||F||_{T^{2,2}}\simeq \left(\int_0^\infty\int_M |F(t,y)|^2\,dy\frac{dt}{t}\right)^{1/2}.
\end{equation}
If, moreover, for some ball $B$ of radius $r$, $F$ is supported in $T(B)$, then

\begin{equation}\label{tent_sup2}
\int_B|\mathcal{A}F|^2\simeq \int_{T(B)}|F(t,y)|^2\,dy\frac{dt}{t},
\end{equation}
and for every $p>2$,

\begin{equation}\label{tent_sup_p}
\left(\int_B|\mathcal{A}F|^p\right)^{2/p}\leq C\int_0^r\frac{dt}{t} \left(\int_{B} |F(t,y)|^p\,dy \right)^{2/p}.
\end{equation}

\end{Lem}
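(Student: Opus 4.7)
The plan is to establish all three estimates by Fubini interchange combined with the volume-comparison Lemma \ref{equiv_vol}, with an additional maximal-function argument for \eqref{tent_sup_p}. None of the three statements should present any serious obstacle.

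First, for \eqref{T22_equiv}, I would unravel the definition of $\mathcal{A}F$ and exchange the $x$ and $y$ integrations using the symmetry $y\in B(x,t)\iff x\in B(y,t)$. This transforms $\int_M|\mathcal{A}F|^2\dx$ into
\begin{equation*}
\int_0^\infty\!\!\int_M |F(t,y)|^2\left(\int_{B(y,t)}\frac{\dx}{V(x,t)}\right)\frac{\dt}{t}\dy.
\end{equation*}
By Lemma \ref{equiv_vol} applied with $\varepsilon=1$, $V(x,t)\simeq V(y,t)$ whenever $x\in B(y,t)$, so the inner integral equals $\simeq |B(y,t)|/V(y,t)=1$, which yields \eqref{T22_equiv}.

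Next, for \eqref{tent_sup2}, I would first unpack the definition of the tent: $(t,y)\in T(B)$ is equivalent to $B(y,t)\subset B$, because $T(B)=M\times(0,\infty)\setminus \mathcal{R}(M\setminus B)$ and $(t,y)\in \mathcal{R}(M\setminus B)$ exactly when $B(y,t)$ meets $M\setminus B$. This has two consequences for any $F$ supported in $T(B)$: one has $\supp F(t,\cdot)\subset B$ for every $t$, and $F(t,\cdot)\equiv 0$ as soon as $t>r$. Running the same Fubini as above, but now integrating $x$ only over $B$, yields
\begin{equation*}
\int_B |\mathcal{A}F|^2\dx = \int_{T(B)} |F(t,y)|^2\left(\int_{B\cap B(y,t)}\frac{\dx}{V(x,t)}\right)\frac{\dt}{t}\dy,
\end{equation*}
and on the support $T(B)$ one has $B\cap B(y,t)=B(y,t)$, so the inner integral is again $\simeq 1$ by Lemma \ref{equiv_vol}, giving \eqref{tent_sup2}.

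Finally, for the $L^p$ estimate \eqref{tent_sup_p}, I would invoke Minkowski's integral inequality, valid since $p/2\geq 1$. Writing $\mathcal{A}F(x)^2=\int_0^r G_t(x)\frac{\dt}{t}$ with
\begin{equation*}
G_t(x):=\frac{1}{V(x,t)}\int_{B(x,t)}|F(t,y)|^2\dy
\end{equation*}
(the truncation at $t=r$ uses the support observation), Minkowski gives
\begin{equation*}
\left(\int_B \mathcal{A}F(x)^p\dx\right)^{2/p}=\|\mathcal{A}F^2\|_{L^{p/2}(B)}\leq \int_0^r \|G_t\|_{L^{p/2}(B)}\frac{\dt}{t}.
\end{equation*}
Since $G_t(x)\leq \mathcal{M}(|F(t,\cdot)|^2)(x)$ where $\mathcal{M}$ is the Hardy--Littlewood maximal operator, which is bounded on $L^{p/2}$ for $p/2>1$ under \eqref{D}, and since $\supp F(t,\cdot)\subset B$, I obtain $\|G_t\|_{L^{p/2}(B)}\lesssim \|F(t,\cdot)\|_{L^p(B)}^2$, which is exactly \eqref{tent_sup_p}. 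The only step requiring a bit of care is the identification of $T(B)$ as the set of $(t,y)$ with $B(y,t)\subset B$; once this is in place, the three estimates follow in parallel from the same Fubini rearrangement, plus the maximal function bound in the last case.
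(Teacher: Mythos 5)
Your argument is correct, and for \eqref{T22_equiv} and \eqref{tent_sup2} it is essentially the paper's proof: the same Fubini exchange using $\{x:(t,y)\in\Gamma(x)\}=B(y,t)$ together with Lemma \ref{equiv_vol} (the paper gets \eqref{tent_sup2} by noting $\mathcal{A}F$ vanishes off $B$ and quoting \eqref{T22_equiv}, while you rerun the Fubini over $x\in B$ and use $B\cap B(y,t)=B(y,t)$ on $T(B)$ — the same computation). For \eqref{tent_sup_p} you diverge slightly after the common Minkowski step: the paper applies Jensen/H\"older inside the ball $B(x,t)$ and then repeats the Fubini plus $V(x,t)\simeq V(y,t)$ argument, whereas you dominate $\frac{1}{V(x,t)}\int_{B(x,t)}|F_t|^2$ pointwise by $\mathcal{M}(|F_t|^2)$ and invoke the strong $(\frac{p}{2},\frac{p}{2})$ boundedness of the Hardy--Littlewood maximal operator under \eqref{D}; both routes are valid, yours being marginally shorter at the cost of importing the maximal theorem (which the paper uses elsewhere anyway), while the paper's stays at the level of Jensen and volume comparison. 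One shared, harmless imprecision: from $B(y,t)\subset B$ one can in general only conclude $t<2r$ rather than $t\leq r$ on an arbitrary doubling manifold, but the paper's own proof makes the same implicit truncation at $r$, and replacing $r$ by $2r$ changes nothing in the subsequent applications.
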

 
\begin{proof}
By Lemma \ref{equiv_vol}, if $d(x,y)\leq t$, then

$$V(x,t)\simeq V(y,t).$$
Thus, using the fact that for $(t,y)$ fixed,

$$\{x\in M\,;\,(t,y)\in \Gamma(x)\}=B(y,t),$$
there holds

$$\begin{array}{rcl}||F||^2_{T^{2,2}}=||\mathcal{A}F||^2_2&=&\int_M dx\int_{\Gamma(x)} |F(t,y)|^2 \frac{dy}{V(x,t)}\frac{dt}{t}\\\\
&\simeq& \int_M dx\int_{\Gamma(x)} |F(t,y)|^2 \frac{dy}{V(y,t)}\frac{dt}{t}\\\\
&\simeq& \int_0^\infty \frac{dt}{t}\int_M |F(t,y)|^2 \frac{V(y,t)}{V(y,t)} dy\\\\
&\simeq& \int_0^\infty\int_M |F(t,y)|^2\,dy\frac{dt}{t}.
\end{array}$$
This proves \eqref{T22_equiv}. If now $F$ is supported in $T(B)$, then for every $x\notin B$, $\mathcal{A}F(x)=0$. Therefore,

$$\int_B|\mathcal{A}F|^2=\int_M |\mathcal{A}F|^2.$$
Then, by \eqref{T22_equiv},

$$\int_B|\mathcal{A}F|^2\simeq \int_0^\infty\int_M |F(t,y)|^2\,dy\frac{dt}{t}=\int_{T(B)}|F(t,y)|^2\,dy\frac{dt}{t},$$
which proves \eqref{tent_sup2}. If $p>2$, then using successively Minkowski's integral inequality and H\"{o}lder's inequality,

$$\begin{array}{rcl}

\left(\int_B|\mathcal{A}F|^p\right)^{2/p}&\leq &\left(\int_B dx\left(\int_0^\infty\frac{dt}{t}\frac{1}{V(x,t)}\int_{B(x,t)}|F(t,y)|^2\,dy\right)^{p/2}\right)^{2/p}\\\\
&\leq& \int_0^\infty\frac{dt}{t} \left(\int_B \left(\frac{1}{V(x,t)}\int_{B(x,t)}|F(t,y)|^2\,dy\right)^{p/2}\right)^{2/p}\\\\
&\leq&  \int_0^\infty\frac{dt}{t} \left(\int_B \frac{1}{V(x,t)}\int_{B(x,t)}|F(t,y)|^p\,dy\right)^{2/p}
\end{array}$$
If $(t,y)\in T(B)$, then

$$\{x\in B\,:\,y\in B(x,t)\}=B(y,t),$$
and therefore, using Lemma \ref{equiv_vol} and the fact that $F$ is supported in $T(B)$, one has

$$\begin{array}{rcl}
\left(\int_B|\mathcal{A}F|^p\right)^{2/p}&\leq & \int_0^r\frac{dt}{t} \left(\int_{\{y\,:\,(t,y)\in T(B)\}} |F(t,y)|^p \frac{V(y,t)}{V(x,t)}\,dy\right)^{2/p}\\\\
&\leq& C\int_0^r\frac{dt}{t} \left(\int_{\{y\,:\,(t,y)\in T(B)\}} |F(t,y)|^p \,dy\right)^{2/p}
\end{array}$$
But it is clear that $\{y\,:\,(t,y)\in T(B)\}\subset B$, and therefore

$$\left(\int_B|\mathcal{A}F|^p\right)^{2/p}\leq C\int_0^r\frac{dt}{t} \left(\int_{B} |F(t,y)|^p\,dy \right)^{2/p}.$$

\end{proof}
We now recall a result from \cite{CMS}, about the boundedness on $T^{2,p}$ of some maximal function: define the maximal function $\mathcal{C}$ by

$$\mathcal{C}F(x)=\sup_{x\in T(B)} \left(\int_{T(B)} |F(t,y)|^2 dy\frac{dt}{t}\right)^{1/2},$$
where $T(B)$ is the tent over the geodesic ball $B$. Then for $p>2$, the maximal function $\mathcal{C}$ is bounded on $T^{2,p}$: 

\begin{Thm}[\cite{CMS}, Theorem 3] \label{CMS}
Assume \eqref{D}. Then for every $2<p<\infty$, there exists a constant $C_p>0$ such that, for every $F\in T^{2,p}$,

$$C_p^{-1}||\mathcal{C}F||_{L^p}\leq||F||_{T^{2,p}}\leq C_p ||\mathcal{C}F||_{L^p}.$$
Also, there is a constant $C$ such that for every $x\in M$, and every $F$,

\begin{equation}\label{tent_max}
\mathcal{C}F(x)\leq C\left(\mathcal{M}\left(|\mathcal{A}F|^2\right)(x)\right)^{1/2}.
\end{equation}

\end{Thm}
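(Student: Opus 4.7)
The theorem splits into the pointwise bound \eqref{tent_max} and the $L^p$ equivalence; the former is the substantive content, while each direction of the latter requires a different type of argument. I read the definition of $\mathcal{C}F$ as carrying the usual $|B|^{-1/2}$ normalization inside the square root, so that \eqref{tent_max} is dimensionally consistent with the Hardy--Littlewood maximal operator $\mathcal{M}$.

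To establish \eqref{tent_max}, fix $x\in M$ and a ball $B$ with $x\in B$, and repeat the Fubini calculation of Lemma \ref{tent_sup}. The key observations are that $(t,y)\in T(B)$ forces $B(y,t)\subset B$, and that $(t,y)\in\Gamma(u)\iff u\in B(y,t)$. Writing $1=|B(y,t)|/V(y,t)=V(y,t)^{-1}\int_{B(y,t)}du$ and swapping the order of integration, and using Lemma \ref{equiv_vol} to replace $V(y,t)$ by $V(u,t)$ up to a constant for $u\in B(y,t)$,
$$\int_{T(B)}|F(t,y)|^2\,dy\frac{dt}{t}\simeq\int_B\left(\int_{\Gamma(u)\cap T(B)}|F(t,y)|^2\frac{dy}{V(u,t)}\frac{dt}{t}\right)du\leq\int_B|\mathcal{A}F(u)|^2\,du.$$
Dividing by $|B|$ gives an upper bound $C\,\mathcal{M}(|\mathcal{A}F|^2)(x)$, and taking the supremum over balls $B\ni x$ yields \eqref{tent_max}.

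The inequality $\|\mathcal{C}F\|_p\leq C_p\|F\|_{T^{2,p}}$ is then immediate from \eqref{tent_max} and the $L^{p/2}$-boundedness of $\mathcal{M}$ (valid because $p>2$ and \eqref{D} is assumed):
$$\|\mathcal{C}F\|_p\leq C\|\mathcal{M}(|\mathcal{A}F|^2)\|_{p/2}^{1/2}\leq C\|\mathcal{A}F\|_p=C\|F\|_{T^{2,p}}.$$

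The reverse inequality $\|F\|_{T^{2,p}}\leq C_p\|\mathcal{C}F\|_p$ is the main obstacle; there can be no pointwise counterpart of \eqref{tent_max} in the other direction, because $\Gamma(u)$ is unbounded in $t$ while each $T(B)$ is not, and a dyadic decomposition in $t$ of $\Gamma(u)$ would give a divergent sum of $\mathcal{C}F(u)^2$'s. My plan is to prove a good-$\lambda$ inequality
$$|\{\mathcal{A}F>2\lambda,\ \mathcal{C}F\leq\gamma\lambda\}|\leq C\gamma^2|\{\mathcal{A}F>\lambda\}|,\qquad\gamma\text{ small}.$$
The strategy is: Whitney-decompose the upper level set $E_\lambda^{\ast}=\{\mathcal{M}\chi_{\{\mathcal{A}F>\lambda\}}>1/2\}$ into balls $B_j$; split $F=F_j'+F_j''$ on each $B_j$ according to whether the tent variable $(t,y)$ sits over a suitable dilate of $B_j$; use the $T^{2,2}$ identity \eqref{tent_sup2} of Lemma \ref{tent_sup} together with the pointwise smallness $\mathcal{C}F\leq\gamma\lambda$ at any base point of $B_j\setminus E_\lambda^{\ast}$ to bound $\int_{B_j}|\mathcal{A}F_j'|^2\leq C|B_j|(\gamma\lambda)^2$; and exploit the off-tent geometry to show $\mathcal{A}F_j''\leq\lambda$ on $B_j\setminus E_\lambda^{\ast}$. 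Summing over $j$ yields the good-$\lambda$ inequality, and the standard integration $\lambda\mapsto p\int\lambda^{p-1}(\cdot)\,d\lambda$ then delivers $\|\mathcal{A}F\|_p\leq C_p\|\mathcal{C}F\|_p$: the $C\gamma^2\|\mathcal{A}F\|_p^p$ contribution is absorbed into the left side provided $\gamma$ satisfies $C\gamma^2<2^{-p}$, which can be arranged. A preliminary truncation (e.g. restricting to $t>\varepsilon$) is needed to guarantee $\|\mathcal{A}F\|_p<\infty$ before absorbing, after which one passes to the limit by monotone convergence.
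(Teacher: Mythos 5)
Your handling of \eqref{tent_max} and of the easy direction is correct and coincides with the paper's own argument: the Fubini computation of Lemma \ref{tent_sup} gives $\int_{T(B)}|F|^2\,dy\,\frac{dt}{t}\leq C\int_B|\mathcal{A}F|^2$ for every ball, hence \eqref{tent_max}, and then $\|\mathcal{C}F\|_p\leq C\|\mathcal{A}F\|_p$ follows from the $L^{p/2}$-boundedness of $\mathcal{M}$. For the hard direction $\|\mathcal{A}F\|_p\leq C_p\|\mathcal{C}F\|_p$ the paper does not argue directly: it invokes duality of tent spaces (Theorem 4.4 in \cite{AMR} and the proof of Theorem 3 in \cite{CMS}), whereas you propose a good-$\lambda$ proof. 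That route is viable in principle, but as sketched it has a genuine gap at its decisive step.

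The gap is the far-part estimate. Your Whitney balls $B_j$ decompose $E^*_\lambda=\{\mathcal{M}\chi_{\{\mathcal{A}F>\lambda\}}>1/2\}$, so $B_j\setminus E^*_\lambda=\emptyset$ and the claim that $\mathcal{A}F_j''\leq\lambda$ ``on $B_j\setminus E^*_\lambda$'' is vacuous; what the Chebyshev step actually needs is $\mathcal{A}F_j''(x)\leq\lambda$ at the points $x\in B_j$ where $\mathcal{A}F(x)>2\lambda$, and those points lie \emph{inside} $\{\mathcal{A}F>\lambda\}$. Such a pointwise bound does not follow from the off-tent geometry alone: for $x\in B_j$ the portion of $\Gamma(x)$ lying outside $T(cB_j)$ has $t\gtrsim r_j$ and is therefore contained in a cone $\Gamma_\alpha(\bar x_j)$ of aperture $\alpha>1$ at a nearby point $\bar x_j$ outside the level set, so what you obtain is $\mathcal{A}F_j''(x)\lesssim\mathcal{A}_\alpha F(\bar x_j)$ -- and there is no pointwise control of $\mathcal{A}_\alpha F(\bar x_j)$ by $\mathcal{A}F(\bar x_j)\leq\lambda$ when $\alpha>1$ (your own dyadic remark already rules out bounding the far part by $\mathcal{C}F$). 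The standard repairs are: either state the good-$\lambda$ inequality with $|\{\mathcal{A}_\alpha F>\lambda\}|$ on the right (Whitney-decomposing that set, so that $\bar x_j$ really satisfies $\mathcal{A}_\alpha F(\bar x_j)\leq\lambda$) and then use the comparability $\|\mathcal{A}_\alpha F\|_p\simeq\|\mathcal{A}F\|_p$ under \eqref{D} before absorbing; or carry out the averaging/Fubini argument over $B(y,t)\setminus\{\mathcal{A}F>\lambda\}$, which is where $E^*_\lambda$ genuinely enters, but this yields integrated rather than pointwise control and changes the structure of the estimate. Relatedly, the smallness $\mathcal{C}F\leq\gamma\lambda$ used for the local bound $\int_{B_j}|\mathcal{A}F_j'|^2\leq C|B_j|(\gamma\lambda)^2$ must be taken at a point of the good-$\lambda$ set inside $B_j$ (if that set is empty there is nothing to prove), not at a point of $B_j\setminus E^*_\lambda$; being outside $E^*_\lambda$ gives information about $\mathcal{M}\chi_{\{\mathcal{A}F>\lambda\}}$, not about $\mathcal{C}F$. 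Until the far-part mechanism is supplied, the proposed proof of $\|F\|_{T^{2,p}}\leq C_p\|\mathcal{C}F\|_p$ is incomplete, whereas the duality argument cited by the paper avoids these issues entirely.
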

In \cite{CMS} this result is proved in the case of $M=\R^n$, and the proof extends to the homogeneous spaces setting. For the sake of completeness and since inequality \eqref{tent_max} will be of great use to us, we provide indications of a proof which works also in the homogeneous spaces setting.\\\\

\noindent \textit{Proof of Theorem \ref{CMS}:}\\\\
The inequality 

$$||\mathcal{A}F||_{L^p}\leq C_p ||\mathcal{C}F||_{L^p}$$
for $2<p<\infty$ is a consequence of the duality of the tent spaces, which extends to the case of homogeneous spaces (see Theorem 4.4 in \cite{AMR} and the proof of Theorem 3 in \cite{CMS}). The inequality 

$$C_p^{-1}||\mathcal{C}F||_{L^p}\leq||\mathcal{A}F||_{L^p}$$
follows at one from \eqref{tent_max} and the strong $(\frac{p}{2},\frac{p}{2})$ type of the Hardy-Littlewood maximal function. It remains to prove \eqref{tent_max}. 
By similar arguments than in Lemma \ref{tent_sup}, one has, for any ball $B$,

$$\int_{T(B)}|F(t,y)|^2dy\frac{dt}{t}\leq C\int_B|\mathcal{A}F|^2.$$
Therefore,

$$\frac{1}{|B|}\int_{T(B)}|F(t,y)|^2dy\frac{dt}{t}\leq C \inf_{x\in B} \mathcal{M}(|\mathcal{A}F|^2)(x).$$
Taking the supremum over the set of balls $B$ containing a fixed point $x\in M$ yields \eqref{tent_max}. 

\cqfd

\subsection{Hardy spaces}

Now we recall the definition of Hardy spaces, following \cite{AMR}. Denote by $D=d+d^\star$ the Dirac operator, and $\Delta_k=dd^\star+d^\star d$ the Laplacian acting on $k-$forms. For $k=0$, we will simply write $\Delta$ instead of $\Delta_0$. \\

First, let us consider the case $p=2$. Define $H_k^2=\overline{\mathcal{R}(D)\cap L^2(\Lambda^k T^\star M)}^{L^2},$ $H^2_{k,d}=\overline{\mathcal{R}(d)\cap L^2(\Lambda^k T^\star M)}^{L^2},$ $H^2_{k,d^\star}=\overline{\mathcal{R}(d^\star)\cap L^2(\Lambda^k T^\star M)}^{L^2}$. They are Hardy spaces of $k-$forms associated respectively to $D$, $d$ and $d^\star$. We have the orthogonal decomposition in $L^2(\Lambda^k T^\star M)$:

$$H^2_k=H^2_{k,d}\oplus_\perp  H^2_{k,d^\star},$$
and the Hodge decomposition

$$L^2(\Lambda^kT^\star M)=H^2_k\oplus_\perp \mathcal{H}^k,$$
where 

$$\mathcal{H}^k=\{\omega\in L^2(\Lambda^k T^\star M)\,:\,\Delta_k\omega=0\}=\ker_{L^2}(D)\cap L^2(\Lambda^k T^\star M)$$ 
is the set of $L^2$ harmonic $k-$forms. There is a description of these Hardy spaces in term of tent spaces, which we describe now. For $\theta \in(0,\frac{\pi}{2})$, set

$$\Sigma_{\theta^+}^0=\{z\in \mathbb{C} \,:\, |\arg(z)|<\theta\},$$
$$ \Sigma_{\theta}^0=\Sigma_{\theta^+}^0\cup (-\Sigma_{\theta^+}^0).$$
Denote by $H^\infty(\Sigma_\theta^0)$ the algebra of holomorphic functions on $\Sigma^0_\theta$. Given $\sigma,\tau>0$, define $\Psi_{\sigma,\tau}(\Sigma_\theta^0)$ to be the set of holomorphic functions $\psi\in H^\infty(\Sigma_\theta^0)$ which satisfy

$$|\psi(z)|\leq C\inf(|z|^\sigma,|z|^{-\tau})$$
for some $C>0$ and all $z\in \Sigma_\theta^0$. Finally, let $\Psi(\sigma_\theta^0)=\bigcup_{\Sigma,\tau>0}\Psi_{\sigma,\tau}(\Sigma_\theta^0)$. For $\psi\in \Psi(\Sigma_\theta^0)$, define

$$\left(\mathcal{Q}_\psi \omega\right)_t=\psi_t(D)\omega,$$
then $\mathcal{Q}_\psi \omega$ belongs to the tent space $T^{2,2}$, and

$$||\mathcal{Q}_\psi\omega||_{T^{2,2}}\simeq ||\omega||_2,$$
for every $\omega\in H^2_k$. More precisely,

$$\mathcal{Q}_\psi : H^2_k\rightarrow T^{2,2}$$
is an isomorphism, with inverse $\mathcal{S}_{\tilde{\psi}}$ defined by

$$\mathcal{S}_{\tilde{\psi}} H=\int_0^\infty \tilde{\psi}_t(D) H_t \frac{dt}{t},$$
where $\tilde{\psi}\in \Psi(\Sigma_\theta^0)$ satisfies $\int_0^\infty \tilde{\psi}(\pm t)\psi(\pm t)\frac{dt}{t}=1$. There is a similar result for $H_{k,d}^2$ and $H_{k,d^\star}^2$: for $\psi\in \Psi_{1,\tau}(\Sigma_\theta^0)$, define 

$$\left(\mathcal{Q}_{d,\psi} \omega\right)_t=td\,\varphi_t(D)\omega,$$
and

$$\left(\mathcal{Q}_{d^\star,\psi} \omega\right)_t=td^\star\,\varphi_t(D)\omega,$$
where $\varphi(z)=\frac{\psi(z)}{z}$. Then for every $\omega\in H_{k,d}^2$ (resp. $H_{k,d^\star}^2$), $\mathcal{Q}_{d^\star,\psi}\omega$ (resp. $\mathcal{Q}_{d,\psi}\omega$) belongs to $T^{2,2}$, and

$$||\mathcal{Q}_{d^\star,\psi}\omega||_{T^{2,2}}\simeq ||\omega||_2\mbox{ (resp. }||\mathcal{Q}_{d,\psi}\omega||_{T^{2,2}}\simeq ||\omega||_2).$$
Moreover, $\mathcal{Q}_{d^\star,\psi} : H_{k,d}\to T^{2,2}$ (resp. $\mathcal{Q}_{d,\psi} : H_{k,d^\star}\to T^{2,2}$) is an isomorphism, with inverse $\mathcal{S}_{d,\tilde{\psi}}$ (resp. $\mathcal{S}_{d^\star,\tilde{\psi}}$) given by

$$\mathcal{S}_{d,\tilde{\psi}}F=\int_0^\infty td\, \tilde{\phi}_t(D)F\frac{dt}{t}\mbox{ (resp. }\mathcal{S}_{d^\star,\tilde{\psi}}F=\int_0^\infty td^\star\, \tilde{\phi}_t(D)F\frac{dt}{t},$$
with $\tilde{\psi}(z)=\frac{\tilde{\phi}(z)}{z}$ satisfying $\int_0^\infty \tilde{\psi}(\pm t)\psi(\pm t)\frac{dt}{t}=1$. An interesting choice for us will be $\varphi(z)=z^{2M}e^{-z^2}$ for some integer $M$: for this choice of $\varphi$, the functionals $\mathcal{Q}_{d,\psi}$ and $\mathcal{Q}_{d^\star,\psi}$ writes

$$\left(\mathcal{Q}_{d,\psi} \omega\right)_t=td\,(t\Delta_k)^Me^{-t\Delta_k}\omega,$$
and

$$\left(\mathcal{Q}_{d^\star,\psi} \omega\right)_t=td^\star\,(t\Delta_k)^Me^{-t\Delta_k}\omega.$$

Now, let us turn to the definition of the Hardy spaces when $p\neq 2$. If $1<p<2$ (resp. $2<p<\infty$), for any $\psi\in\Psi_{1,\beta+1}(\Sigma_\theta^0)$ (resp. $\psi\in\Psi_{\beta,2}(\Sigma_\theta^0)$) the Hardy space $H^p_k$ is defined as the closure of 

$$\{\omega\in H^2_k\,:\,||\mathcal{Q}_{\psi}\omega||_{T^{2,p}}<\infty\}$$
under the norm  $||\mathcal{Q}_{\psi}\omega||_{T^{2,p}}$. It turns out that the above definition of $H^p_k$ is actually independant of the choice of $\psi$ in the considered class of holomorphic functions (see Definition 5.5 and 5.6 in \cite{AMR}). The Hardy spaces $H^p_{k,d}$ and $H^p_{k,d^\star}$ are defined similarly, replacing $\mathcal{Q}_{\psi}$ by $\mathcal{Q}_{d^\star,\psi}$ and $\mathcal{Q}_{d,\psi}$ respectively: for example, if $1<p<2$ (resp. $2<p<\infty$), for any $\psi\in\Psi_{1,\beta+1}(\Sigma_\theta^0)$ (resp. $\psi\in\Psi_{\beta,2}(\Sigma_\theta^0)$) the Hardy space $H^p_{k,d}$ is defined as the closure of 

$$\{\omega\in H^2_{k,d}\,:\,||\mathcal{Q}_{d^\star,\psi}\omega||_{T^{2,p}}<\infty\}$$
under the norm  $||\mathcal{Q}_{d^\star,\psi}\omega||_{T^{2,p}}$.

\section{Our results}

In this section, we explain our results in greater details. Our first result concerns the boundedness of a generalised Lusin area integral. Define $\mathcal{Q}$ by 

$$(\mathcal{Q}f)_t=t\nabla e^{-t^2\Delta}f.$$
Notice that $\mathcal{Q}=\mathcal{Q}_{d,\psi}$ with $\psi(z)=ze^{-z^2}$. It is a classical result that in $\R^n$, for any $1<p<\infty$, 

$$\mathcal{Q} : L^p\rightarrow T^{2,p}$$
is bounded (see \cite{St2}, p. 91). It is claimed in \cite{AMR}, Remark 6.5 that the same result is true if the Hodge Laplacian on $1-$forms $\Delta_1$ satisfies Gaussian estimates, i.e. if \eqref{G_k} holds for $k=1$. The claim relies on an unpublished manuscript of Auscher, Duong and McIntosh. Furthermore, it is a consequence of \cite{AMR}, Corollary 6.3 that with the only assumption \eqref{D} on $M$, if $2\leq p<\infty$ and $\psi\in \Psi_{\beta,2}$ (recall that $\beta=[\frac{\kappa}{2}]+1$), then

$$\mathcal{Q}_{d,\psi} : L^p\rightarrow T^{2,p}$$
is bounded. Notice that in general, the function $\psi(z)=ze^{-z^2}$ does not belong to $\Psi_{\beta,2}$, and therefore Corollary 6.3 in \cite{AMR} does not say anything concerning the $L^p\rightarrow T^{2,p}$ boundedness of $\mathcal{Q}$. Our first result in this paper is that $L^p$ gradient estimates for the heat kernel are essentially enough to imply the $L^p\rightarrow T^{2,p}$ boundedness of $\mathcal{Q}$:

\begin{Thm}\label{area}

Let us assume that $M$ satisfies the doubling property \eqref{D}, the scaled Poincar\'{e} inequalities \eqref{P}, and for some $2<p<\infty$ the gradient estimate \eqref{grad_p} for the heat kernel. Then for every $2<q<p$, 

$$\mathcal{Q} : L^q\rightarrow T^{2,q}$$
is bounded.

\end{Thm}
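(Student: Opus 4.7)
The plan is to use the $\mathcal{C}$-functional characterization of $T^{2,q}$ in Theorem~\ref{CMS} to reduce matters to the pointwise bound
$$\mathcal{C}(\mathcal{Q}f)(x)\leq C\bigl(\mathcal{M}(|f|^{2})(x)\bigr)^{1/2}.$$
Once this is known, the Hardy--Littlewood maximal inequality on $L^{q/2}$ (applicable since $q>2$) yields $\|\mathcal{C}(\mathcal{Q}f)\|_{q}\lesssim \|f\|_{q}$, and Theorem~\ref{CMS} then gives $\|\mathcal{Q}f\|_{T^{2,q}}=\|\mathcal{A}(\mathcal{Q}f)\|_{q}\lesssim\|\mathcal{C}(\mathcal{Q}f)\|_{q}\lesssim\|f\|_{q}$, as required.

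To prove the pointwise bound I would fix $x\in M$ and a ball $B\ni x$ of radius $r$, and use the annular decomposition $f=f_{1}+\sum_{i\geq 2}f_{i}$ with $f_{1}=f\chi_{4B}$, $f_{i}=f\chi_{C_{i}}$ from the setup of Lemma~\ref{ultra}. Since $T(B)\subset (0,r]\times B$, Minkowski's inequality in $L^{2}(T(B),\,dy\tfrac{dt}{t})$ gives
$$\Bigl(\int_{T(B)}|t\nabla e^{-t^{2}\Delta}f|^{2}\,dy\tfrac{dt}{t}\Bigr)^{1/2}\leq \sum_{i}\Bigl(\int_{T(B)}|t\nabla e^{-t^{2}\Delta}f_{i}|^{2}\,dy\tfrac{dt}{t}\Bigr)^{1/2}.$$
For $i=1$ I would invoke the classical $L^{2}$ square-function identity $\int_{0}^{\infty}\|t\nabla e^{-t^{2}\Delta}g\|_{2}^{2}\tfrac{dt}{t}=\tfrac{1}{4}\|g\|_{2}^{2}$, immediate from spectral calculus for the self-adjoint operator $\Delta$, applied with $g=f_{1}$; combined with $\|f_{1}\|_{2}^{2}=\int_{4B}|f|^{2}\leq C|B|\mathcal{M}(|f|^{2})(x)$ from doubling, the near-term contribution is controlled by $C|B|\mathcal{M}(|f|^{2})(x)$.

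For $i\geq 2$ I use the $L^{2}$ version of Lemma~\ref{ultra}, which holds without \eqref{grad_p}, to get
$$\tfrac{1}{|B|}\int_{B}|t\nabla e^{-t^{2}\Delta}f_{i}|^{2}\,dy\leq C e^{-c4^{i}(r/t)^{2}}\cdot\tfrac{1}{|2^{i+1}B|}\int_{C_{i}}|f|^{2}.$$
Integrating against $\tfrac{dt}{t}$ over $(0,r)$ and substituting $u=r/t$ produces an extra factor $\int_{1}^{\infty}e^{-c4^{i}u^{2}}\tfrac{du}{u}\leq Ce^{-c'4^{i}}$, while doubling gives $\tfrac{1}{|2^{i+1}B|}\int_{C_{i}}|f|^{2}\leq \mathcal{M}(|f|^{2})(x)$. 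The square roots of these bounds sum over $i$ thanks to the doubly-exponential decay in $4^{i}$; adding the near-term estimate and taking the supremum over $B\ni x$ produces the desired pointwise inequality. The principal obstacle is the careful bookkeeping in the Minkowski decomposition and extracting the exponential gain from the $t$-integration; the $L^{q_{0}}$ formulation of Lemma~\ref{ultra}, where the hypothesis \eqref{grad_p} truly enters, is not strictly needed for this route but gives an alternative, slightly sharper path through higher $L^{q_{0}}$ local averages in which the restriction $q<p$ becomes more transparent.
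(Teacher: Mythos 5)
Your route is genuinely different from the one in the paper, and I could not find a mathematical error in it. The paper does not attempt any pointwise bound on $\mathcal{C}(\mathcal{Q}f)$: it proves the localized $L^q$ ($q>2$) estimate of Proposition \ref{off-diago} for the far part $g=(1-\chi_{16B})f$, which is where \eqref{P} and \eqref{grad_p} enter (through Lemma \ref{ultra} and Lemma \ref{Lp_off_diago}, i.e.\ through $L^q$ bounds on $t\nabla e^{-t^2\Delta}$), and then runs the Shen-type good-$\lambda$ machinery of Proposition \ref{bound_result} to self-improve the $T^{2,2}$ bound to $T^{2,q}$ for $2<q<p$. You instead exploit that $\mathcal{C}$ is a purely $L^2$-based functional: the near part is handled by the global vertical identity $\int_0^\infty\|t\nabla e^{-t^2\Delta}f_1\|_2^2\,\frac{dt}{t}=\tfrac14\|f_1\|_2^2$ (spectral theorem), the far parts by the Davies--Gaffney estimates of Lemma \ref{L2-off} (the $q=2$ case of Lemma \ref{ultra}; note you only need it for $t\lesssim r$, which is all the tent contains), and doubling absorbs the $2^{i\kappa}$ volume factors into the Gaussian gain. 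This yields $\mathcal{C}(\mathcal{Q}f)\leq C(\mathcal{M}(f^2))^{1/2}$ pointwise, and then Theorem \ref{CMS} together with the maximal theorem finishes. What your approach buys is striking: it never uses \eqref{P} or \eqref{grad_p} and gives the conclusion for every $q\in(2,\infty)$, not just $q<p$; what the paper's approach buys is that it stays within the framework (local higher integrability plus good-$\lambda$) that the author also needs elsewhere, and it does not rely on the special Gaussian-type $L^2$ off-diagonal decay in the way your argument does.

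Two points should be made explicit before this can be accepted as a complete proof, and given that the conclusion is strictly stronger than what the paper (and \cite{AMR}, Remark 6.5, which invokes \eqref{G_k} for $k=1$) claim, they deserve careful writing. First, the identity $\mathcal{Q}f=\sum_i\mathcal{Q}f_i$ on $T(B)$ needs a justification of convergence (for $f\in L^q$, $q>2$, sum the tails using Lemma \ref{L2-off} and the polynomial volume growth from \eqref{D} for each fixed $t\leq 2r$). Second, the inequality $\|\mathcal{A}F\|_q\leq C\|\mathcal{C}F\|_q$ in Theorem \ref{CMS} is stated for $F\in T^{2,q}$, so to conclude $\mathcal{Q}f\in T^{2,q}$ you should apply it to truncations $\chi_{\{1/N<t<N,\ d(y,o)<N\}}\mathcal{Q}f$ (which lie in $T^{2,q}$, with $\mathcal{C}$ of the truncation dominated by $\mathcal{C}(\mathcal{Q}f)$) and let $N\to\infty$ by monotone convergence. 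With these details supplied, your argument stands as a shorter and more general proof of Theorem \ref{area}; I recommend double-checking it against the tent-space literature precisely because it bypasses the hypotheses \eqref{P} and \eqref{grad_p} that the paper treats as essential.
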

The second result of this paper concerns the identification of some Hardy space with $L^p$, for $p\geq2$. The Hardy spaces that we will look at are the ones related to the Riesz transform $d\Delta^{-1/2}$, that is $H^p_{1,d}$. It is shown in \cite{AMR}, Theorem 5.15 that for any $1\leq p\leq \infty$,

$$d\Delta^{-1/2} : H^p_0=H^p_{0,d^\star}\rightarrow H^p_{1,d}$$
is an isomorphism. \\

If $M$ is a connected, complete, non-compact Riemannian manifold satisfying \eqref{D}, then for $1<p<\infty$, the range of $d^\star : \Lambda^1T^\star M\to \Lambda^0 T^\star M$ intersected with $L^p$, is equal to $L^p$: indeed if it is not the case, then there exists a non-zero $f\in L^{q}$, $\frac{1}{p}+\frac{1}{q}=1$ such that for any $\omega\in C_0^\infty(\Lambda^1 T^\star M)$,

$$\langle f, d^\star \omega\rangle =0,$$
which implies that $df=0$ in the weak (and thus, by elliptic regularity, in the strong) sense, therefore $f$ is constant and in $L^q$, and since by \eqref{D} $M$ has infinite volume, $f$ is identically zero, which is a contradiction. Thus, applying \cite{AMR}, Corollary 6.3, for any $2\leq p<\infty$,

$$L^p\subset H_0^p$$
and

$$\overline{\mathcal{R}(d)\cap L^p(\Lambda^1T^\star M)}^{L^p}\subset H^p_{1,d},$$
where $\mathcal{R}(d)$ denotes the range of $d$. We show in the second result of this paper that the $L^p$ gradient estimates of the heat kernel are essentially enough to prove that $H^p_{1,d}=\overline{\mathcal{R}(d)\cap L^p(\Lambda^1T^\star M)}^{L^p}$:

\begin{Thm}\label{Hardy}

Assume that $M$ is a complete Riemannian manifold satisfying the doubling property \eqref{D}, the scaled Poincar\'{e} inequalities \eqref{P} and for some $2<p<\infty$ the gradient estimates \eqref{grad_p} for the heat kernel. Then for every $2<q<p$,

$$H_{1,d}^q=\overline{\mathcal{R}(d)\cap L^q(\Lambda^1T^\star M)}^{L^q}.$$

\end{Thm}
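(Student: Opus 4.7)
The inclusion $\overline{\mathcal{R}(d)\cap L^q(\Lambda^1T^\star M)}^{L^q}\subset H^q_{1,d}$ has already been noted in the introduction as a consequence of \cite{AMR}, Corollary 6.3. We focus on the reverse inclusion. The plan is to produce, for $\omega$ in a dense subset of $H^q_{1,d}$, a potential $v\in L^q$ with $\omega=dv$ via the Calder\'on reproducing formula and tent-space duality, and then to approximate $\omega$ in $L^q$ by the exact forms $dv_\varepsilon:=d(e^{-\varepsilon\Delta}v)$, which lie in $\mathcal{R}(d)\cap L^q$ thanks to the gradient estimate \eqref{grad_p}.

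\textbf{Step 1: Calder\'on reproducing formula.} Pick integers $M,M'$ with $2M+1,2M'\geq\beta$, and set
\[
\psi(z):=z^{2M+1}e^{-z^2}\in\Psi_{\beta,2}(\Sigma_\theta^0),\qquad \tilde\psi(z):=cz^{2M'+1}e^{-z^2},
\]
where $c$ is normalized so that $\int_0^\infty\tilde\psi(\pm t)\psi(\pm t)\frac{dt}{t}=1$. Setting $\tilde\phi(z):=\tilde\psi(z)/z=cz^{2M'}e^{-z^2}$, the reproducing identity $\omega=\mathcal{S}_{d,\tilde\psi}\mathcal{Q}_{d^\star,\psi}\omega$, valid on the dense subspace $H^2_{1,d}\cap H^q_{1,d}$ of $H^q_{1,d}$, rewrites as $\omega=dv$ with
\[
v:=\int_0^\infty\tilde\phi_t(D)\bigl(\mathcal{Q}_{d^\star,\psi}\omega\bigr)_t\,dt,\qquad\bigl(\mathcal{Q}_{d^\star,\psi}\omega\bigr)_t=td^\star(t\Delta_1)^Me^{-t\Delta_1}\omega\in T^{2,q}.
\]

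\textbf{Step 2: $v\in L^q$ by duality.} Let $q':=q/(q-1)\in(1,2)$. For $g\in L^{q'}\cap L^2$ compactly supported, the self-adjointness of $\tilde\phi_t(D)$ on functions gives
\[
\langle v,g\rangle=\int_0^\infty\!\!\int_M\bigl(\mathcal{Q}_{d^\star,\psi}\omega\bigr)_t\cdot\tilde\phi_t(D)g\,dx\,dt=\int_0^\infty\frac{dt}{t}\!\!\int_M F_t\cdot G_t\,dx,
\]
with $F:=\mathcal{Q}_{d^\star,\psi}\omega$ and $G_t:=t\tilde\phi_t(D)g$. The $(T^{2,q})^\star=T^{2,q'}$ tent-space duality (Theorem 4.4 of \cite{AMR}) yields $|\langle v,g\rangle|\leq C\|F\|_{T^{2,q}}\|G\|_{T^{2,q'}}$. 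The map $g\mapsto G$ is a square function of $g$ associated to $z\tilde\phi(z)=cz^{2M'+1}e^{-z^2}\in\Psi_{\beta,2}$, so \cite{AMR}, Corollary 6.3---which needs only doubling in the range $q'\in(1,2]$---gives $\|G\|_{T^{2,q'}}\leq C\|g\|_{q'}$. Taking the supremum over $g$ yields $\|v\|_q\leq C\|\omega\|_{H^q_{1,d}}$, so $v\in L^q$ and $\omega=dv$ in the distributional sense.

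\textbf{Step 3: $L^q$-approximation; the main obstacle.} Set $v_\varepsilon:=e^{-\varepsilon\Delta}v\in L^q$. The gradient estimate \eqref{grad_p} yields $\|dv_\varepsilon\|_q\leq C\varepsilon^{-1/2}\|v\|_q<\infty$, so $dv_\varepsilon\in\mathcal{R}(d)\cap L^q$; the intertwining $de^{-\varepsilon\Delta}=e^{-\varepsilon\Delta_1}d$ gives $dv_\varepsilon=e^{-\varepsilon\Delta_1}\omega$ as distributions. The remaining task, which is the main technical difficulty, is to show that $(dv_\varepsilon)_{\varepsilon>0}$ has a limit in $L^q$ as $\varepsilon\to 0$, and that this limit coincides with $\omega$: this will simultaneously deliver $\omega\in L^q$ and $\omega\in\overline{\mathcal{R}(d)\cap L^q}^{L^q}$. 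Identification of the limit comes from $H^q_{1,d}$-convergence $dv_\varepsilon\to\omega$, which is itself obtained by dominated convergence applied to $\mathcal{Q}_{d^\star,\psi}(dv_\varepsilon)_t=td^\star(t\Delta_1)^Me^{-(t+\varepsilon)\Delta_1}\omega\to(\mathcal{Q}_{d^\star,\psi}\omega)_t$ in $T^{2,q}$. The Cauchy property $dv_\varepsilon-dv_{\varepsilon'}=d(e^{-\varepsilon\Delta}-e^{-\varepsilon'\Delta})v\to 0$ in $L^q$ is where Theorem \ref{area} enters crucially: it provides the square-function bound $t\nabla e^{-t^2\Delta}v\in T^{2,q}$ which, combined with the Davies--Gaffney off-diagonal bounds of Lemma \ref{L2-off} and the $L^q$ pointwise estimates of Lemma \ref{ultra}, is designed to yield uniform $L^q$ control on the differences and close the argument.
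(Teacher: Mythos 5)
There are two genuine gaps, and they sit exactly where the paper has to work hardest. First, Step 2 fails as written. After moving $\tilde\phi_t(D)$ onto $g$ you obtain $G_t=t\,\tilde\phi_t(D)g=c\,t\,(t^2\Delta)^{M'}e^{-t^2\Delta}g$, which carries a \emph{bare} factor $t$ rather than a power of $t\sqrt{\Delta}$: this is not a square function of the type covered by \cite{AMR}, Corollary 6.3, and spectrally one has $||G||_{T^{2,2}}\simeq ||\Delta^{-1/2}g||_{2}$, so the claimed bound $||G||_{T^{2,q'}}\leq C||g||_{q'}$ cannot hold for homogeneity reasons (also, the part of Corollary 6.3 used in this paper under \eqref{D} alone is the $L^p\to T^{2,p}$ boundedness for $2\leq p<\infty$, not the asserted range $q'\in(1,2]$). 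The natural repair is to estimate $||\omega||_q=||dv||_q$ by pairing $\omega$ against $L^{q'}$ one-forms $\eta$, which produces $(\mathcal{Q}_{d^\star,\tilde\psi}\eta)_t=t d^\star\tilde\phi_t(D)\eta$, a genuine square function --- but now attached to the Hodge Laplacian $\Delta_1$, and its $L^{q'}\to T^{2,q'}$ boundedness for $q'<2$ is, by duality, essentially equivalent to the statement you are trying to prove; under \eqref{D} and \eqref{P} alone it is not available. So the duality route relocates the main difficulty rather than avoiding it.

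Second, Step 3, which you yourself flag as the main obstacle, is not carried out, and it is not a routine consequence of the ingredients you list. Knowing $v\in L^q$ and $\mathcal{Q}v\in T^{2,q}$ (Theorem \ref{area}) does not readily give that $\nabla(e^{-\varepsilon\Delta}-e^{-\varepsilon'\Delta})v\to 0$ in $L^q$: passing from a $t$-integrated square-function bound to $L^q$ control of a fixed-time (or limiting) gradient is precisely the kind of estimate that requires the full machinery. In the paper this is the content of Theorem \ref{main_est}: one writes $\omega=c^{-1}\mathcal{S}\mathcal{Q}_N\omega$ by the spectral theorem (so no potential $v$ and no $\varepsilon$-regularization are needed) and proves $||\mathcal{S}\mathcal{Q}_N\omega||_q\leq C||\omega||_{H^q_{1,d}}$ via the approximation $A_r=I-(I-e^{-r^2\Delta})^n$, the two local estimates of Proposition \ref{maxi_est} (Lemma \ref{S_1}, Lemma \ref{est_max_j}, and estimate (3.12) of \cite{ACDH}, which is where \eqref{grad_p} actually enters), and the good-$\lambda$ argument of the Appendix following \cite{AC}. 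Nothing in your proposal replaces this step; saying that Lemma \ref{L2-off} and Lemma \ref{ultra} are ``designed to yield'' the Cauchy property is a statement of intent, not a proof. Note finally that the paper's proof of Theorem \ref{Hardy} does not use Theorem \ref{area} at all, so the latter cannot be expected to carry the weight you assign to it.
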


\begin{Cor}\label{main2}{\em (= Main result)} 

Assume that $M$ is a connected, complete, non-compact Riemannian manifold satisfying the doubling property \eqref{D} and the scaled Poincar\'{e} inequalities \eqref{P}. Then for every $2<p_0<\infty$, the following are equivalent:

\begin{enumerate}

 \item For every $p\in (2,p_0)$, 
 
$$H_{1,d}^p=\overline{\mathcal{R}(d)\cap L^p(\Lambda^1T^\star M)}^{L^p}$$ with equivalent norms.
 
 \item For every $p\in (2,p_0)$, the Riesz transform $d\Delta^{-1/2}$ is bounded on $L^p$.

\end{enumerate}
If one these two equivalent conditions is satisfied, then for every $p\in (2,p_0)$,

$$||u||_p\simeq ||u||_{H^p}\simeq ||d\Delta^{-1/2}u||_p,\qquad \forall u\in C_0^\infty(M).$$

\end{Cor}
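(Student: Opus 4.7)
The plan is to combine Theorem \ref{Hardy} with the Auscher--Coulhon--Duong--Hofmann equivalence from \cite{ACDH} and the Hardy-space isomorphism $d\Delta^{-1/2}\colon H^p_0\to H^p_{1,d}$ from \cite{AMR}, Theorem 5.15. No new analysis is required; the proof is essentially a matter of wiring these ingredients together.

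For the implication $(2)\Rightarrow(1)$, I would proceed as follows. Under \eqref{D} and \eqref{P}, \cite{ACDH} tells us that the boundedness of $d\Delta^{-1/2}$ on $L^q$ for $q\in(2,p_0)$ is equivalent to the gradient estimates $(\nabla_q)$ holding for every such $q$. Thus (2) delivers $(\nabla_q)$ throughout $(2,p_0)$. To conclude (1), fix $q\in(2,p_0)$, choose any $p$ with $q<p<p_0$, and apply Theorem \ref{Hardy} (whose hypotheses are satisfied because $(\nabla_p)$ holds): this gives $H^q_{1,d}=\overline{\mathcal{R}(d)\cap L^q}^{L^q}$. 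Equivalence of norms is automatic from the open mapping theorem once the continuous embedding $\overline{\mathcal{R}(d)\cap L^q}^{L^q}\hookrightarrow H^q_{1,d}$ (noted before the statement of Theorem \ref{Hardy}, via \cite{AMR}, Corollary 6.3) is combined with the set equality.

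For $(1)\Rightarrow(2)$, I would use the isomorphism $d\Delta^{-1/2}\colon H^p_0\to H^p_{1,d}$ from \cite{AMR}, Theorem 5.15, which yields $\|d\Delta^{-1/2}u\|_{H^p_{1,d}}\simeq\|u\|_{H^p_0}$ for $u\in C_0^\infty(M)$. Assumption (1) then gives $\|d\Delta^{-1/2}u\|_p\simeq\|d\Delta^{-1/2}u\|_{H^p_{1,d}}$, so
\[
\|d\Delta^{-1/2}u\|_p\simeq\|u\|_{H^p_0}.
\]
Finally, the continuous embedding $L^p\hookrightarrow H^p_0$ (\cite{AMR}, Corollary 6.3) gives $\|u\|_{H^p_0}\lesssim\|u\|_p$, and hence $\|d\Delta^{-1/2}u\|_p\lesssim\|u\|_p$ for $u\in C_0^\infty(M)$, which is (2) by density.

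The final norm equivalence $\|u\|_p\simeq\|u\|_{H^p}\simeq\|d\Delta^{-1/2}u\|_p$ is obtained by putting together the estimates above with a duality argument. The chain $\|u\|_{H^p_0}\lesssim\|u\|_p$ and $\|d\Delta^{-1/2}u\|_p\simeq\|u\|_{H^p_0}$ gives all inequalities except $\|u\|_p\lesssim\|d\Delta^{-1/2}u\|_p$. For this remaining direction, I would exploit the identity $u=\Delta^{-1/2}d^{\star}(d\Delta^{-1/2}u)$ on $C_0^\infty(M)$, so that
\[
\|u\|_p\le\|\Delta^{-1/2}d^{\star}\|_{p\to p}\,\|d\Delta^{-1/2}u\|_p.
\]
The operator $\Delta^{-1/2}d^{\star}$ is the formal adjoint of $d\Delta^{-1/2}$, so its $L^p$-boundedness for $p>2$ is equivalent, by duality, to the $L^{p'}$-boundedness of the Riesz transform for $p'\in(1,2)$. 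This latter fact is the classical Coulhon--Duong theorem \cite{CD}, which applies because \eqref{D} and \eqref{P} imply the Gaussian upper bound on the heat kernel contained in \eqref{LY}. This closes the loop.

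I do not expect any real obstacle here: the corollary is essentially a bookkeeping statement once Theorem \ref{Hardy} is granted. The only mildly subtle point is to remember that the \emph{reverse} inequality $\|u\|_p\lesssim\|d\Delta^{-1/2}u\|_p$ for $p>2$ is free, coming from the Coulhon--Duong range $p<2$ via adjunction, and does not require any new assumption beyond those already in force.
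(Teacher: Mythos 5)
Your proposal is correct and follows essentially the same route as the paper: $(2)\Rightarrow(1)$ via the easy half of the \cite{ACDH} equivalence (Riesz boundedness gives \eqref{grad_p}) followed by Theorem \ref{Hardy}, and $(1)\Rightarrow(2)$ via the inclusion $L^p\subset H^p_0$ and the isomorphism $d\Delta^{-1/2}\colon H^p_0\to H^p_{1,d}$ of \cite{AMR}. The only difference is that you spell out the reverse inequality $\|u\|_p\lesssim\|d\Delta^{-1/2}u\|_p$ through the adjoint $\Delta^{-1/2}d^\star$ and the Coulhon--Duong theorem on $L^{p'}$, a step the paper leaves implicit; your argument for it is sound.
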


\begin{proof}

Assume that the equality 

$$H_{1,d}^p=\overline{\mathcal{R}(d)\cap L^p(\Lambda^1T^\star M)}^{L^p}$$
holds for every $p\in (2,p_0)$. Since, for every $2\leq p<\infty$, $L^p\subset H^p_0$ and the Riesz transform is bounded from $H^p_0$ to $H^p_{1,d}$, one gets that the Riesz transform is bounded on $L^p$ for every $p\in (2,p_0)$. This shows one implication of the equivalence claimed in Corollary \ref{main2}. For the converse implication: if we assume that the Riesz transform is bounded on $L^p$ for any $p\in (2,p_0)$, then (see \cite{ACDH}, it is the easy part of Theorem 1.3 therein) the gradient estimates for the heat kernel \eqref{grad_p} holds for any $p\in (2,p_0)$. Applying Theorem \ref{Hardy}, we get that the equality 

$$H_{1,d}^p=\overline{\mathcal{R}(d)\cap L^p(\Lambda^1T^\star M)}^{L^p}$$
holds for any $p\in (2,p_0)$. This conclude the proof.

\end{proof}

\begin{Rem}

{\em 

Theorem \ref{Hardy}, together with the the fact that the Riesz transform is $L^p\to H_{1,d}^p$ bounded, allows us to recover the result of Theorem 1.3 in \cite{ACDH}, according to which, in the class of manifolds satisfying \eqref{D} and \eqref{P}, the gradient estimates \eqref{grad_p} for the heat kernel implies the boundedness of the Riesz transform on $L^q$ for every $2<q<p$. Actually, our proof of Theorem \ref{Hardy} relies heavily on techniques developped in \cite{ACDH}.

}

\end{Rem}

\section{Proof of Theorem \ref{area}}

This section is devoted to the proof of Theorem \ref{area}. It relies on the following result, in the spirit of [\cite{ACDH}, ~Theorem 2.1], [\cite{AC}, ~Theorem 2.3] and [\cite{S}, Theorem 3.1]:

\begin{Pro}\label{bound_result}

Let $T=(T_t)_{t>0}$ be a linear operator, bounded from $L^2$ to $T^{2,2}$. Assume that there is $p>2$ and $\alpha>1$ such that for every ball $B$, and for every $f$,

\begin{equation}\label{improve1}
\left(\frac{1}{|B|}\int_B\mathcal{A}(\chi_{T(B)}Tg)^p\right)^{1/p}\leq C\left(\inf_{x\in B}\mathcal{C}(Tf)(x)+\inf_{x\in B}\left(\mathcal{M}\left( f^2\right)\right)^{1/2}(x)\right),
\end{equation}
where we have let $g:=(1-\chi_{\alpha B})f$. Then, for every $2<q<p$, $T$ is bounded from $L^q$ to $T^{2,q}$.

\end{Pro}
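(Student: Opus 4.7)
The approach will follow the Shen-type extrapolation method, paralleling [S, Theorem 3.1] and its use in [ACDH, Theorem 2.1] and [AC, Theorem 2.3]. Consider $\widetilde Tf := \mathcal A(Tf)$ as a sublinear operator; the hypothesis that $T : L^2 \to T^{2,2}$ gives $\widetilde T : L^2 \to L^2$. The plan is to deduce $L^q$-boundedness of $\widetilde T$ for $q \in (2,p)$ from a good-lambda inequality of the form
\begin{equation*}
\bigl|\{\widetilde T f > A\lambda,\ \mathcal M(f^2)^{1/2} \leq \gamma\lambda\}\bigr| \leq \varphi(\gamma) \bigl|\{\mathcal M(|\widetilde T f|^2)^{1/2} > \lambda\}\bigr|, \qquad \lambda > 0,
\end{equation*}
where $A > 1$ is a fixed constant and $\varphi(\gamma) \to 0$ as $\gamma \to 0^+$. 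Multiplying by $q\lambda^{q-1}$, integrating in $\lambda$, and using the strong $(q/2,q/2)$-type of $\mathcal M$ together with the $L^2$-boundedness of $\widetilde T$ then yields $\|\widetilde T f\|_{L^q} \leq C_q \|f\|_{L^q}$ once $\gamma$ is chosen sufficiently small.

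To prove the good-lambda inequality, I will Whitney-decompose $E_\lambda := \{\mathcal M(|\widetilde T f|^2)^{1/2} > \lambda\} = \bigcup_i B_i$ with $r(B_i) \sim d(B_i, E_\lambda^c)$, so that each comparable enlargement $\widetilde B_i$ contains a point $x_i$ with $\mathcal M(|\widetilde T f|^2)^{1/2}(x_i) \leq \lambda$. It suffices to establish, for each $i$,
\begin{equation*}
\bigl|\{y \in B_i : \widetilde T f(y) > A\lambda,\ \mathcal M(f^2)^{1/2}(y) \leq \gamma\lambda\}\bigr| \leq \varphi(\gamma)\,|B_i|.
\end{equation*}
On $B_i$, split $f = h_i + g_i$ with $h_i := \chi_{\alpha \widetilde B_i} f$, and decompose sublinearly
\begin{equation*}
\widetilde T f(y) \leq \mathcal A(\chi_{T(\widetilde B_i)^c} Tf)(y) + \mathcal A(\chi_{T(\widetilde B_i)} T h_i)(y) + \mathcal A(\chi_{T(\widetilde B_i)} T g_i)(y).
\end{equation*}

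For the \emph{tail} $\mathcal A(\chi_{T(\widetilde B_i)^c} Tf)(y)$, $y \in B_i$: by a standard cone/tent geometry argument (the complement $\Gamma(y)\setminus T(\widetilde B_i)$ is covered by tents over dyadic enlargements of $\widetilde B_i$), this is bounded pointwise by $C\,\mathcal C(Tf)(y)$, and thus by $C\,\mathcal M(|\widetilde T f|^2)^{1/2}(y)$ via Theorem~\ref{CMS}; the choice of $x_i$ and doubling turn this into a $C\lambda$ bound on $B_i$. For the \emph{far-field} $\mathcal A(\chi_{T(\widetilde B_i)} T g_i)$, choosing $x_0 \in B_i$ with $\mathcal M(f^2)^{1/2}(x_0) \leq \gamma\lambda$ (else the good-lambda set is empty) and applying the hypothesis on $\widetilde B_i$ gives
\begin{equation*}
\Bigl(\tfrac{1}{|\widetilde B_i|}\int_{\widetilde B_i} \mathcal A(\chi_{T(\widetilde B_i)} T g_i)^p\Bigr)^{1/p} \leq C\,\mathcal C(Tf)(x_i) + C\,\mathcal M(f^2)^{1/2}(x_0) \leq C\lambda,
\end{equation*}
so Chebyshev yields $|\{\mathcal A(\chi_{T(\widetilde B_i)} T g_i) > A\lambda/3\} \cap B_i| \leq C A^{-p} |B_i|$. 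For the \emph{near-field} $\mathcal A(\chi_{T(\widetilde B_i)} T h_i)$, the $L^2 \to T^{2,2}$ boundedness of $T$ gives
\begin{equation*}
\int_{B_i} \mathcal A(\chi_{T(\widetilde B_i)} T h_i)^2 \leq \|T h_i\|_{T^{2,2}}^2 \leq C\|h_i\|_{L^2}^2 \leq C|\widetilde B_i|\gamma^2\lambda^2,
\end{equation*}
so Chebyshev yields $|\{\mathcal A(\chi_{T(\widetilde B_i)} T h_i) > A\lambda/3\} \cap B_i| \leq C A^{-2} \gamma^2 |B_i|$. Adding the three contributions gives $\varphi(\gamma) = C(A^{-p} + A^{-2}\gamma^2)$, which tends to $0$ with $\gamma$ after $A$ is fixed large enough to absorb the $C\lambda$ tail into $A\lambda/3$.

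The main obstacle will be the pointwise estimate $\mathcal A(\chi_{T(\widetilde B_i)^c} Tf)(y) \leq C\,\mathcal C(Tf)(y)$ on $B_i$, which depends on a careful geometric decomposition of the part of the cone $\Gamma(y)$ lying outside the tent $T(\widetilde B_i)$ into pieces localized at dyadic tents; the rest is a now-standard, but delicate, good-lambda bookkeeping in the doubling metric measure setting.
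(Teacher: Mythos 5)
There is a genuine gap, and it sits exactly at the point you yourself flag as ``the main obstacle'': the claimed pointwise bound $\mathcal A\bigl(\chi_{T(\widetilde B_i)^c}Tf\bigr)(y)\leq C\,\mathcal C(Tf)(y)$ for $y\in B_i$ is not available, and in general it is false. If you carry out the dyadic covering you invoke, the part of the cone $\Gamma(y)$ lying above the tent, at heights $t\in(2^kR,2^{k+1}R)$ with $R=r(\widetilde B_i)$, is contained in $T(2^{k+2}\widetilde B_i)$ and, by doubling, the cone weight $1/V(y,t)$ is comparable to $1/|2^{k+2}\widetilde B_i|$; so each dyadic slab is bounded by $\mathcal C(Tf)(y)^2$ \emph{without any decaying factor}, and the sum over $k$ diverges. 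This is not an artifact of the method: for functions concentrated near the lateral boundary of the cone at $N$ different scales, $\mathcal A$ of the tail is of order $\sqrt N\,\mathcal C$, so no constant $C$ works. In other words, your scheme implicitly requires control of the full $\mathcal A(Tg)$ on the ball (the inequality \eqref{improve2} of the paper), whereas the hypothesis \eqref{improve1} only controls the tent--truncated quantity $\mathcal A(\chi_{T(B)}Tg)$; the whole point of Proposition \ref{bound_result}, as the paper explains after Theorem \ref{area}, is that \eqref{improve2} is probably false and one must make do with \eqref{improve1}. The Chebyshev estimates you give for the near-field and far-field pieces are fine, and so is the final integration, but the good-$\lambda$ inequality collapses without a correct treatment of the tail.

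The paper circumvents precisely this difficulty by changing the maximal function on which the good-$\lambda$ argument runs: instead of working with $\mathcal A(Tf)$ against level sets of $\mathcal M(|\mathcal A(Tf)|^2)^{1/2}$, it works with $\mathcal C(Tf)$ throughout (Proposition \ref{good}). One Whitney-decomposes $\{\mathcal C F>\lambda\}$ and uses the localization Lemma \ref{loc}: if some $\bar x\in B$ has $\mathcal C F(\bar x)\leq\lambda$, then $\{\chi_B\,\mathcal C F>K\lambda\}\subset\{\mathcal C(\chi_{T(3B)}F)>K\lambda/K_0\}$. Because $\mathcal C$ is built from averages over tents, this lemma eliminates the tail altogether: only the truncated function $\chi_{T(3cB_i)}F$ ever appears, which is exactly what \eqref{improve1} and the $L^2\to T^{2,2}$ bound (via \eqref{tent_sup2}) can handle, through \eqref{tent_max} and the weak type of $\mathcal M$. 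The $T^{2,q}$ norm is recovered only at the very end via $\|F\|_{T^{2,q}}\simeq\|\mathcal C F\|_q$ for $q>2$ (Theorem \ref{CMS}). The paper's closing Remark makes the same point from another angle: the localization lemma fails for $\mathcal G F=(\mathcal M(|\mathcal A F|^2))^{1/2}$, i.e.\ for the functional your proposal is based on, which is why $\mathcal C$ is the right object here. To repair your argument you would essentially have to reproduce this change of maximal function; the tail estimate cannot be fixed as stated.
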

The proof of Proposition \ref{bound_result} is given in the Appendix. Let us now give the proof of Theorem \ref{area}:\\\\
\textit{Proof of Theorem \ref{area}:}\\\\
We state a preliminary result:

\begin{Pro}\label{off-diago}

Assume that the measure is doubling \eqref{D}, that the scaled Poincar\'{e} inequalities hold \eqref{P}, and the gradient estimate of the heat kernel \eqref{grad_p}. Then for every $2<q<p$, there is a constant $C$ such that for every $f$,

$$\left(\frac{1}{|B|}\int_B\mathcal{A}(\chi_{T(B)}\mathcal{Q}g)^q\right)^{1/q}\leq C\left(\inf_{x\in B}\mathcal{C}(\mathcal{Q}f)(x)+\inf_{x\in B}\left(\mathcal{M}\left( f^2\right)\right)^{1/2}(x)\right),$$
where we have let $g:=(1-\chi_{16 B})f$.

\end{Pro}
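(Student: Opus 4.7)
The plan is to reduce the area-functional estimate to a pointwise-in-$t$ $L^q$-average of $t\nabla e^{-t^2\Delta}g$, then invoke the off-diagonal gradient bounds of Lemma \ref{ultra} on an annular decomposition of $g$, and finally integrate in $t$ using that the Gaussian factor swallows the logarithmic divergence of $dt/t$ near zero.

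To begin, let $r$ denote the radius of $B$. Since $\chi_{T(B)}\mathcal{Q}g$ is supported in $T(B)$, inequality \eqref{tent_sup_p} of Lemma \ref{tent_sup} applied with exponent $q>2$ gives
$$\left(\frac{1}{|B|}\int_B \mathcal{A}(\chi_{T(B)}\mathcal{Q}g)^q\right)^{2/q}\leq C\int_0^r \frac{dt}{t}\left(\frac{1}{|B|}\int_B |t\nabla e^{-t^2\Delta}g|^q\,dy\right)^{2/q}.$$
The task now is to bound the inner average uniformly in $t\in(0,r]$ with enough decay to make the $dt/t$ integral converge.

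Next, using the notation preceding Lemma \ref{ultra}, set $f_i=\chi_{C_i}f$ with $C_1=4B$ and $C_i=2^{i+1}B\setminus 2^iB$ for $i\geq 2$. Since $16B=2^4 B$, the function $g=(1-\chi_{16B})f$ decomposes as $g=\sum_{i\geq 4}f_i$. Applying the triangle inequality in $L^q(B)$ together with the off-diagonal estimate \eqref{i_ultra_p} (which uses \eqref{D}, \eqref{G_k} for $k=0$ coming from \eqref{P}, and \eqref{grad_p}) yields, for each $t\in(0,r]$,
$$\left(\frac{1}{|B|}\int_B|t\nabla e^{-t^2\Delta}g|^q\right)^{1/q}\leq C\sum_{i\geq 4}e^{-c\,4^i(r/t)^2}\left(\frac{1}{|2^{i+1}B|}\int_{C_i}f^2\right)^{1/2}.$$
Since $B\subset 2^{i+1}B$, each annular average is dominated by $\inf_{x\in B}(\mathcal{M}(f^2)(x))^{1/2}$, and since $r/t\geq 1$ the geometric sum collapses into a single exponential, giving
$$\left(\frac{1}{|B|}\int_B|t\nabla e^{-t^2\Delta}g|^q\right)^{1/q}\leq Ce^{-c'(r/t)^2}\inf_{x\in B}\bigl(\mathcal{M}(f^2)(x)\bigr)^{1/2}.$$

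Plugging back into the reduction, the remaining integral $\int_0^r e^{-2c'(r/t)^2}\,dt/t$ is finite after the substitution $u=r/t$, and taking square roots produces
$$\left(\frac{1}{|B|}\int_B\mathcal{A}(\chi_{T(B)}\mathcal{Q}g)^q\right)^{1/q}\leq C\inf_{x\in B}\bigl(\mathcal{M}(f^2)(x)\bigr)^{1/2},$$
which is dominated by the right-hand side of the claimed inequality. In fact the $\mathcal{C}(\mathcal{Q}f)$ term is never needed here; its presence in Proposition \ref{bound_result} is purely for generality. The main technical input is the careful invocation of Lemma \ref{ultra}, since this is where the hypothesis \eqref{grad_p} on the $L^q$ gradient estimates of the heat kernel is used; once that ingredient is in hand, the rest is routine decomposition together with integration of a Gaussian against $dt/t$.
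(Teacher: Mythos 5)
Your proof is correct, but it takes a genuinely different and more direct route than the paper. The paper does not apply the $L^q$ off-diagonal bounds of Lemma \ref{ultra} to the annular pieces of $g$ directly; instead it splits the semigroup as $e^{-t^2\Delta}=e^{-\frac{3t^2}{4}\Delta}e^{-\frac{t^2}{4}\Delta}$, uses conservation ($e^{-t\Delta}\mathbf{1}=\mathbf{1}$) to subtract an average, and then the Poincar\'e inequality \eqref{P} to convert oscillation of $h=e^{-\frac{t^2}{4}\Delta}g$ into $F_{t/2}$ on dilated balls (Lemma \ref{Lp_off_diago}); the price is the extra term $\Lambda_{i,t}$, which is what forces the $\mathcal{C}(\mathcal{Q}f)$ contribution on the right-hand side, and the near piece must be handled separately because $h$, unlike $g$, no longer vanishes near $B$. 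You avoid creating that problem at all: since $g$ vanishes on $16B$, every piece $f_i$ sits at distance $\gtrsim 2^i r\geq r\geq t$ from $B$, so \eqref{tent_sup_p} plus a termwise application of \eqref{i_ultra_p} and summation of the Gaussians already gives the bound, with only the $\left(\mathcal{M}(f^2)\right)^{1/2}$ term — a slightly stronger conclusion, and one in which \eqref{P} enters only through the Gaussian upper bound needed in Lemma \ref{ultra}. Note that your argument leans on \eqref{i_ultra_p} holding with constants uniform over all $0<t\leq r$ for a ball of radius $r$; this is exactly the regime in which the paper itself invokes Lemma \ref{ultra} inside the proof of Lemma \ref{Lp_off_diago}, so your reliance is no stronger than the paper's, and your route is the shorter of the two (essentially it upgrades the paper's ``Step 1'' from the $L^2$ Davies--Gaffney estimates to the $L^q$ estimates of Lemma \ref{ultra}, making Lemma \ref{Lp_off_diago} and ``Step 2'' unnecessary). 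What the paper's longer route buys is mainly structural: it stays closer to the scheme of \cite{ACDH} and to the argument reused in Section 5, where the Carleson-type maximal function genuinely cannot be avoided.
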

Let us assume for the moment the result of Proposition \ref{off-diago}. Proposition \ref{off-diago} implies that for the choice $T=\mathcal{Q}$, the hypotheses of Proposition \ref{bound_result} are satisfied. Therefore, by Proposition \ref{bound_result}, $\mathcal{Q}$ is bounded from $L^q$ to $T^{2,q}$, for every $2<q<p$. This concludes the proof of Theorem \ref{area}.

\cqfd

\begin{Rem}

{\em
Recall (Theorem \ref{CMS}) that the norm in $T^{2,q}$ is given by $||F||_{T^{2,q}}=||\mathcal{A}F||_q\simeq ||\mathcal{C}F||_q$. If the following stronger inequality were true:

\begin{equation}\label{improve2}
\left(\frac{1}{|B|}\int_B\mathcal{A}(\mathcal{Q}g)^p\right)^{1/p}\leq C\left(\inf_{x\in B}\mathcal{C}(\mathcal{Q}f)(x)+\inf_{x\in B}\left(\mathcal{M}\left( f^2\right)\right)^{1/2}(x)\right),
\end{equation}
where $g:=(1-\chi_{\alpha B})f$, then an application of Theorem 2.3 in \cite{AC} -- with the choice $T=\mathcal{A}\mathcal{Q}$ -- would yield directly Theorem \ref{area}. However, we have been unable to prove inequality \eqref{improve2}, and in fact we feel that \eqref{improve2} does not hold. Therefore, we have to adapt the argument of \cite{AC} in order to work with the weaker inequality \eqref{improve1}.

}

\end{Rem}

\noindent\textit{Proof of Proposition \ref{off-diago}:}\\\\
The proof is an elaboration on some arguments from \cite{ACDH}. Define $F=\mathcal{Q}g$. We will denote $F_t=F(t,\cdot)$. Fix a ball $B$ with radius $r$. By inequality \eqref{tent_sup_p} in Lemma \ref{tent_sup} applied to $\chi_{T(B)}F$, there holds:

$$\left(\frac{1}{|B|}\int_B\mathcal{A}(\chi_{T(B)}F)^q\right)^{2/q}\leq C\int_0^r\frac{dt}{t}\left(\frac{1}{|B|}\int_{B} |F(t,y)|^q\,dy\right)^{2/q}.$$
Arguments from \cite{ACDH} (more precisely, Lemma 3.2 and the proof of (3.12) therein), relying on $L^p$ off-diagonal estimates for the gradient of the heat semi-group, lead to the following lemma, whose proof is postponed:

\begin{Lem}\label{Lp_off_diago}
For $t\leq r$, the following inequality holds:


$$\left(\frac{1}{|B|}\int_{B} |F_t|^q\right)^{1/q}\leq C\left(\frac{r}{t}\right)\left(\frac{1}{|4B|}\Bint_{4B}|F_{t/2}|^2\right)^{1/2}+C \sum_{i=2}^\infty e^{-c2^i}\Lambda_{i,t}^{1/2},$$
where

$$\Lambda_{i,t}:=\sup_{l=2,\cdots, i+1}\frac{1}{|2^lB|}\Bint_{2^lB} |F_{t/2}|^2.$$

\end{Lem}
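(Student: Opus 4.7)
The strategy is to use the semigroup property to factor $e^{-t^2\Delta}$, then exploit that $\nabla$ kills constants so that the Poincaré inequality can convert an $L^2$ norm of the intermediate function into an $L^2$ norm of $F_{t/2}$.

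\textbf{Step 1 (semigroup splitting).} Write $e^{-t^2\Delta}=e^{-3t^2\Delta/4}\,e^{-t^2\Delta/4}$ and set $h:=e^{-t^2\Delta/4}g$, so that
$$F_t = t\nabla e^{-3t^2\Delta/4}h.$$
Since $e^{-s\Delta}1=1$ on a complete manifold, $t\nabla e^{-3t^2\Delta/4}$ kills constants; hence for any constant $c$ we may replace $h$ by $h-c$. I will take $c=h_{4B}$, the average on $4B$.

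\textbf{Step 2 (annular decomposition).} Decompose $h-c = \sum_{i\geq 1}\chi_{C_i}(h-c)$ (with $C_1=4B$ and $C_i=2^{i+1}B\setminus 2^iB$ for $i\geq 2$) and apply Lemma~\ref{ultra} to $t\nabla e^{-3t^2\Delta/4}$, which is of the same type as $t\nabla e^{-t^2\Delta}$ up to a harmless constant. For the local piece one gets, via Poincaré \eqref{P} applied to $h$ on $4B$,
$$\Bigl(\tfrac{1}{|4B|}\int_{4B}|h-h_{4B}|^{2}\Bigr)^{1/2}\leq C\,r\,\Bigl(\tfrac{1}{|4B|}\int_{4B}|\nabla h|^{2}\Bigr)^{1/2}=\tfrac{2Cr}{t}\Bigl(\tfrac{1}{|4B|}\int_{4B}|F_{t/2}|^{2}\Bigr)^{1/2},$$
which produces the first term on the right-hand side (the factor $r/t$).

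\textbf{Step 3 (distant annuli via telescoping).} For $i\geq 2$, Lemma~\ref{ultra} gives
$$\Bigl(\tfrac{1}{|B|}\int_{B}|t\nabla e^{-3t^{2}\Delta/4}(\chi_{C_i}(h-h_{4B}))|^{q}\Bigr)^{1/q}\leq C e^{-c4^{i}(r/t)^{2}}\Bigl(\tfrac{1}{|2^{i+1}B|}\int_{C_{i}}|h-h_{4B}|^{2}\Bigr)^{1/2}.$$
The key is now to bound $\bigl(|2^{i+1}B|^{-1}\!\int_{C_i}|h-h_{4B}|^{2}\bigr)^{1/2}$. Writing $h-h_{4B}=(h-h_{2^{i+1}B})+\sum_{l=2}^{i}(h_{2^{l+1}B}-h_{2^l B})$ and estimating each averaged difference by the $L^{2}$ oscillation of $h$ on $2^{l+1}B$, Poincaré on $2^{l+1}B$ yields
$$\Bigl(\tfrac{1}{|2^{l+1}B|}\int_{2^{l+1}B}|h-h_{2^{l+1}B}|^{2}\Bigr)^{1/2}\leq C\tfrac{2^{l}r}{t}\Bigl(\tfrac{1}{|2^{l+1}B|}\int_{2^{l+1}B}|F_{t/2}|^{2}\Bigr)^{1/2}\leq C\tfrac{2^{i}r}{t}\Lambda_{i,t}^{1/2}$$
for $l\leq i$. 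Summing in $l$ gives a bound of the form $Ci\cdot\tfrac{2^{i}r}{t}\,\Lambda_{i,t}^{1/2}$, and using $t\leq r$ the prefactor combined with $e^{-c4^{i}(r/t)^{2}}\leq e^{-c4^{i}}$ absorbs the polynomial factor $i\cdot 2^{i}(r/t)$ into a new exponential $e^{-c2^{i}}$ (after possibly shrinking $c$), which is the form stated in the lemma.

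\textbf{Step 4 (combine).} Summing the estimates of Step 2 and Step 3 in $i$ yields the claimed bound. The main obstacle is Step 3, specifically making the telescoping sum compatible with the exponential factor so that one ends up with $e^{-c2^{i}}\Lambda_{i,t}^{1/2}$ rather than a larger tail; this is where the assumption $t\leq r$ is used, and where one must be careful that the doubling dimension factor coming from $|2^{i+1}B|/|B|$ (which enters when moving from $L^{q}$ to $L^{2}$ averages) is also absorbed by the Gaussian-type decay from Lemma~\ref{ultra}.
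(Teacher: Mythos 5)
Your proposal is correct and follows essentially the same route as the paper: the same semigroup splitting $e^{-t^2\Delta}=e^{-3t^2\Delta/4}e^{-t^2\Delta/4}$ with subtraction of the average $h_{4B}$ (using conservativeness of the heat semigroup), the same annular decomposition handled by Lemma \ref{ultra}, and the same telescoping-plus-Poincar\'e argument converting oscillations of $h$ into averages of $F_{t/2}$, with the polynomial factor $i\,2^{i}(r/t)$ absorbed by $e^{-c4^{i}(r/t)^{2}}$ thanks to $t\leq r$. The only minor nuance is that $e^{-s\Delta}\mathbf{1}=\mathbf{1}$ is justified by the doubling hypothesis rather than completeness alone, which is exactly how the paper invokes it.
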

Assuming for the moment the result of Lemma \ref{Lp_off_diago}, let us finish the proof of Proposition \ref{off-diago}. By Lemma \ref{Lp_off_diago} and Minkowski's inequality, 

$$\begin{array}{rcl}
\left(\frac{1}{|B|}\Bint_B\mathcal{A}(\chi_{T(B)}F)^q\right)^{1/q}&\leq& C\left(\Bint_0^r \frac{dt}{t} \left(\frac{r}{t}\right)^2\frac{1}{|4B|}\Bint_{4B}|F_{t/2}|^2\right)^{1/2}\\\\
&&+ C\sum_{i=2}^\infty e^{-c2^i}\left(\Bint_0^r\frac{dt}{t}\Lambda_{i,t}\right)^{1/2}\\\\
&\leq&I+II
\end{array}$$
We claim that

\begin{equation}\label{I}
I\leq C\inf_{x\in B} \left(\mathcal{M}\left(f^2\right)(x)\right)^{1/2}
\end{equation}
and

\begin{equation}\label{II}
II\leq C\inf_{x\in B}\mathcal{C}(F)(x).
\end{equation}
The validity of the inequalities \eqref{I} and \eqref{II} implies at once that the result of Proposition \ref{off-diago} holds. In order that the proof of Proposition \ref{off-diago} be complete, it remains to prove inequalities \eqref{I} and \eqref{II}.\\\\
\underline{Step 1: estimate of $I$:}\\\\
It is a consequence of the $L^2$ off-diagonal estimates for $\mathcal{Q}$. Let us denote $C_1=16 B$, and for $i\geq2$, $C_i:=2^{i+3}B\setminus 2^{i+2}B$. Recall that $g=(1-\chi_{16B})f$, and write

$$g=\sum_{i=2}^\infty g\chi_{C_i}=\sum_{i=2}^\infty g_i,$$
where we have used the fact that $g$ is zero on $C_1=16 B$.  Then, using the $L^2$ off-diagonal estimates for $\mathcal{Q}_t=t\nabla e^{-t\Delta}$ (Lemma \ref{L2-off}), we get that for every $i\geq2$,

$$\int_{4 B}|\mathcal{Q}_{t/2} g_i|^2\leq C e^{-c4^i\left(\frac{r}{t}\right)^2}\int_{C_i}f^2.$$
Therefore, using \eqref{D},

$$\frac{1}{|4B|}\int_{4B}|\mathcal{Q}_{t/2}(D) g_i|^2\leq C 2^ie^{-c4^i\left(\frac{r}{t}\right)^2}\left(\frac{1}{|2^{i+3}B|}\int_{C_i}f^2\right).$$
By definition of the maximal function $\mathcal{M}$,

$$\frac{1}{|2^{i+3}B|}\int_{C_i}f^2\leq \frac{1}{|2^{i+3}B|}\int_{2^{i+3}B}f^2\leq \inf_{x\in B}\mathcal{M}(f^2)(x).$$
Consequently, 

$$\begin{array}{rcl}
I&\leq& \sum_{i=2}^\infty \left(\Bint_0^r \frac{dt}{t} \left(\frac{r}{t}\right)^2\frac{1}{|4B|}\Bint_{4B}|\mathcal{Q}_{t/2}g_i|^2\right)^{1/2}\\\\
&\leq&\sum_{i=2}^\infty\left( \Bint_0^r 2^i\left(\frac{r}{t}\right)^2 e^{-c4^i\left(\frac{r}{t}\right)^2} \frac{dt}{t}\right)^{1/2} \left(\inf_{x\in B}\left(\mathcal{M}(f^2)(x)\right)^{1/2}\right).
\end{array}$$
Next, for any $t\leq r$,

$$2^i\left(\frac{r}{t}\right)^2 e^{-4^i\left(\frac{r}{t}\right)^2}\leq C e^{-c\left(\frac{r}{t}\right)^2}e^{-c2^i\left(\frac{r}{t}\right)^2}\leq C e^{-c\left(\frac{r}{t}\right)^2}e^{-c2^i},$$
and thus, performing a change of variable,

$$ \begin{array}{rcl}
\Bint_0^r 2^i\left(\frac{r}{t}\right)^2 e^{-4^i\left(\frac{r}{t}\right)^2} \frac{dt}{t}&\leq& Ce^{-c2^i}\Bint_0^1e^{-\frac{c}{u^2}}\frac{du}{u}\\\\
&\leq& Ce^{-c2^i}.
\end{array}$$
Therefore,

$$I\leq C\inf_{x\in B}\left(\mathcal{M}(f^2)(x)\right)^{1/2},$$
which proves \eqref{I}.\\\\
\underline{Step 2: estimate of $II$:}\\\\
Performing a change of variables, we have
$$II\leq C\sum_{i=2}^\infty e^{-c2^i}\left(\int_0^{r/2}\frac{du}{u}\Lambda_{i,2u}\right)^{1/2}.$$
If $t\leq r$ and $y\in 2^{l}B$, then $(t,y)\in T(2.2^{l}B)$. Furthermore, by definition of the maximal function $\mathcal{C}$, we have 

$$\left(\frac{1}{|2.2^{l}B|}\int_{T(2.2^{l}B)} |F(t,y)|^2\frac{dt}{t} dy\right)^{1/2}\leq \inf_{x\in B}\mathcal{C}(F)(x),$$
and therefore, by definition of $\Lambda_{i,t}$ and using (D), for every $i\geq2$, there holds

$$\left(\int_0^{r/2}\frac{du}{u}\Lambda_{i,2u}\right)^{1/2}\leq C\inf_{x\in B}\mathcal{C}(F)(x).$$
Finally,
$$II\leq C\left(\sum_{i=2}^\infty e^{-c2^i} \right)\inf_{x\in B}\mathcal{C}(F)(x)=C'\inf_{x\in B}\mathcal{C}(F)(x).$$
This proves inequality \eqref{II}, and concludes the proof of Proposition \ref{off-diago}.
\cqfd

\noindent\textit{Proof of Lemma \ref{Lp_off_diago}:}\\\\
For the sake of completeness, we now give the proof of Lemma \ref{Lp_off_diago}. As we have already said, it is based on arguments from \cite{ACDH} (more precisely, from Lemma 3.2 and the proof of (3.12) therein). Define $\varphi(z)=e^{-\frac{1}{4}z^2}$, and write 

$$\begin{array}{rcl}
F_t&=&t\nabla e^{-\frac{3t^2}{4}\Delta} \varphi_t(D) g \\\\
&=&t\nabla e^{-\frac{3t^2}{4}\Delta} \left(\varphi_t(D) g- (\varphi_t(D) g)_{4B}\right)
\end{array}$$
where we have used the fact that under \eqref{D}, $e^{-t\Delta}\mathbf{1}=\mathbf{1}$. Let $h:=\varphi_t(D) g$. Notice that $h$ depends on $t$, but we do not write this dependance in order to keep the notation light. Define $C_1=4B$ and for $i\geq 2$, $C_i=2^{i+1}B\setminus 2^iB$. Let finally $h_i=(h-h_{4B})\chi_{C_i}$. We write

$$h-h_{4B}=\sum_{i=1}^\infty h_i.$$
With these notations and applying Minkowski's inequality, one has

\begin{equation}\label{sum_off_diago}
\left(\frac{1}{|B|}\int_{B} |F_t|^q\right)^{1/q}\leq \sum_{i=1}^\infty \left(\frac{1}{|B|}\int_B |t\nabla e^{-\frac{3t^2}{4}\Delta} h_i|^q\right)^{1/q}.\end{equation}
By inequalities \eqref{1_ultra_p} and \eqref{i_ultra_p}, there holds that for $i=1$,

\begin{equation}\label{L_p-g_1}
\left(\frac{1}{|B|}\int_B |t\nabla e^{-\frac{3t^2}{4}\Delta} h_1|^q\right)^{1/q}\leq C \left(\frac{1}{|4B|}\int_{C_1}|h_1|^2\right)^{1/2},
\end{equation}
and for every $i\geq2$,

\begin{equation}\label{L_p-g_i}
\left(\frac{1}{|B|}\int_B |t\nabla e^{-\frac{3t^2}{4}\Delta} h_i|^q\right)^{1/q}\leq C e^{-c4^i\left(\frac{r}{t}\right)^2}\left(\frac{1}{|2^{i+1}B|}\int_{C_i}|h_i|^2\right)^{1/2}.
\end{equation}
But by definition,

$$\int_{C_i} |h_i|^2\leq \int_{2^{i+1}B}|h-h_{4B}|^2.$$
Next, write

$$|h-h_{4B}|\leq |h-h_{2^{i+1}B}|+\sum_{l=2}^i|h_{2^lB}-h_{2^{l+1}B}|.$$
Therefore, by Minkowski's inequality,

$$\begin{array}{rcl}
\left(\frac{1}{|2^{i+1}B|}\Bint_{C_i} |h_i|^2\right)^{1/2}&\leq& \left(\frac{1}{|2^{i+1}B|}\Bint_{2^{i+1}B}|h-h_{2^{i+1}B}|^2\right)^{1/2}\\\\
&&+\sum_{l=2}^i|h_{2^lB}-h_{2^{l+1}B}|.
\end{array}$$
Applying the Poincar\'{e} inequality \eqref{P} on $2^{i+1}B$, we obtain

$$\left(\frac{1}{|2^{i+1}B|}\int_{2^{i+1}B}|h-h_{2^{i+1}B}|^2\right)^{1/2}\leq C (2^{i+1}r) \left(\frac{1}{|2^{i+1}B|}\int_{2^{i+1}B}|\nabla h|^2\right)^{1/2}.$$
Also, observe that, by Cauchy-Schwarz and the Poincar\'{e} inequality \eqref{P} on $2^{l+1}B$, we have

$$|h_{2^lB}-h_{2^{l+1}B}|\leq \left(\frac{1}{|2^{l}B|}\int_{2^{l}B}|h-h_{2^{l+1}B}|^2\right)^{1/2}\leq C(2^lr)\left(\frac{1}{|2^{l}B|}\int_{2^{l}B}|\nabla h|^2\right)^{1/2}.$$
As a consequence, for every $i\geq1$,

$$\left(\frac{1}{|2^{i+1}B|}\int_{C_i} |h_i|^2\right)^{1/2}\leq C \sum_{l=2}^{i+1} 2^lr\left(\frac{1}{|2^lB|}\int_{2^lB} |\nabla h|^2\right)^{1/2}.$$
Now, notice that 

$$\frac{t}{2}\nabla h=\frac{t}{2}\nabla\varphi_t(D)g=F_{t/2},$$
so that for every $i\geq1$,

\begin{equation}\label{g_i}
\left(\frac{1}{|2^{i+1}B|}\int_{C_i} |h_i|^2\right)^{1/2}\leq C \sum_{l=2}^{i+1} 2^l\left(\frac{r}{t}\right)\left(\frac{1}{|2^lB|}\int_{2^lB} |F_{t/2}|^2\right)^{1/2}.
\end{equation}
From \eqref{sum_off_diago}, \eqref{L_p-g_1}, \eqref{L_p-g_i} and \eqref{g_i}, we get that

\begin{equation}
\begin{array}{rcl}
&&\left(\frac{1}{|B|}\int_{B} |F_t|^q\right)^{1/q}\\\\
&&\,\,\,\leq C\left(\frac{1}{|4B|}\Bint_{4B} |F_{t/2}|^2\right)^{1/2}\\\\
&&\,\,\,\,\,\,\,\,+C\sum_{i=2}^\infty e^{-c4^i\left(\frac{r}{t}\right)^2}\sum_{l=2}^{i+1} 2^l\left(\frac{r}{t}\right)\left(\frac{1}{|2^lB|}\Bint_{2^lB} |F_{t/2}|^2\right)^{1/2}.
\end{array}
\end{equation}
Therefore, by definition of $\Lambda_{i,t}$,

$$\left(\frac{1}{|B|}\int_{B} |F_t|^q\right)^{1/q}\leq C\left(\frac{1}{|4B|}\int_{4B} |F_{t/2}|^2\right)^{1/2}+C\sum_{i=2}^\infty i2^{i+1}\left(\frac{r}{t}\right)e^{-c4^i\left(\frac{r}{t}\right)^2}\Lambda_{i,t}.$$
But for $t\leq r$ and $i\geq 1$, one has the elementary inequality

$$i2^{i+1}\left(\frac{r}{t}\right)e^{-c4^i\left(\frac{r}{t}\right)^2}\leq Ce^{-\frac{c}{2}\left(\frac{r}{t}\right)^2}.$$
Thus, for $t\leq r$,

$$\left(\frac{1}{|B|}\int_{B} |F_t|^q\right)^{1/q}\leq C\left(\frac{1}{|4B|}\Bint_{4B} |F_{t/2}|^2\right)^{1/2}+C\sum_{i=2}^\infty e^{-c4^i\left(\frac{r}{t}\right)^2}\Lambda_{i,t},$$
which is precisely the result of Lemma \ref{Lp_off_diago}.

\cqfd

\section{Proof of Theorem \ref{Hardy}}
In this section, we prove Theorem \ref{Hardy}. Let $\mathcal{S}$ be defined by

$$\mathcal{S}H=\int_0^\infty t\nabla e^{-t^2\Delta}H_t\frac{dt}{t}.$$
Define also 

$$\mathcal{Q}_N=\mathcal{Q}_{d^\star,\psi},$$
where $\psi(z)=z^{2N+1}e^{-z^2}$. That is,

$$\mathcal{Q}_N\omega=(t^2\Delta)^Ne^{-t^2\Delta}(td^\star \omega).$$
Theorem \ref{Hardy} is consequence of the following result:

\begin{Thm}\label{main_est}

Assume that $N\geq \frac{3}{4}\kappa$. Then for every $2<q<p$, there exists a constant $C$ such that for every $\omega\in H^2_{1,d}\cap H^q_{1,d}$,

$$||\mathcal{S}\mathcal{Q}_N \omega||_{L^q}\leq C ||\omega||_{H^q_{1,d}}.$$
\end{Thm}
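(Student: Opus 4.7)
The plan is to estimate $\|\mathcal S\mathcal Q_N\omega\|_{L^q}$ by duality, combined with a \emph{shift of derivatives} trick that converts the pairing into one involving a higher-order square function on the dual (co-exact) side, which can be bounded directly on $L^{q'}$ thanks to its strong vanishing at the origin.

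First, the hypothesis $N\ge\tfrac34\kappa$ forces $2N+1\ge\beta$ with a comfortable margin (recall $\beta$ is the smallest integer strictly greater than $\kappa/2$), so $\psi(z)=z^{2N+1}e^{-z^2}\in\Psi_{\beta,2}(\Sigma_\theta^0)$. The Hardy-space characterization recalled in Section~2 then yields the norm equivalence $\|\mathcal Q_N\omega\|_{T^{2,q}}\simeq\|\omega\|_{H^q_{1,d}}$.

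Now pair $\mathcal S\mathcal Q_N\omega$ with $\alpha\in L^{q'}\cap L^2(\Lambda^1T^\star M)$. Using the adjoint identity $\nabla^\star=d^\star$, the intertwining $\Delta\, d^\star=d^\star\,\Delta_1$, and the self-adjointness of the heat semigroups, for every integer $0\le M\le N$ one obtains
$$\langle\mathcal S\mathcal Q_N\omega,\alpha\rangle=\int_0^\infty\int_M (\mathcal Q_{N-M}\omega)_t\cdot\bigl(td^\star(t^2\Delta_1)^Me^{-t^2\Delta_1}\alpha\bigr)\,dx\,\frac{dt}{t},$$
redistributing $M$ of the original $N$ copies of $t^2\Delta$ from the $F$-side to the dual side. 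By the Coifman--Meyer--Stein tent-space duality recalled in Section~2, this is bounded by $\|\mathcal Q_{N-M}\omega\|_{T^{2,q}}\,\|G^{(M)}\|_{T^{2,q'}}$, where $G^{(M)}_t:=td^\star(t^2\Delta_1)^Me^{-t^2\Delta_1}\alpha$.

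The parameter $M$ must then satisfy two competing requirements. On the one hand, the Hardy-space equivalence $\|\mathcal Q_{N-M}\omega\|_{T^{2,q}}\simeq\|\omega\|_{H^q_{1,d}}$ demands $2(N-M)+1\ge\beta$, i.e.\ $M\le N-(\beta-1)/2$. On the other, the bound $\|G^{(M)}\|_{T^{2,q'}}\le C\|\alpha\|_{L^{q'}}$ for $1<q'<2$ requires the vanishing order $2M+1$ of the associated symbol $\tilde\psi_M(z)=z^{2M+1}e^{-z^2}$ to be of order $\kappa$, so that a direct Calder\'on--Zygmund/off-diagonal extrapolation argument can be run \emph{starting from $L^{q'}$}, rather than from the Hardy space $H^{q'}_{1,d^\star}$. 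I expect this second bound to be the main obstacle; its proof should combine the $L^2$ off-diagonal (Davies--Gaffney) estimates for $td^\star(t^2\Delta_1)^Me^{-t^2\Delta_1}$—deduced from Davies--Gaffney for $e^{-t\Delta_1}$ on a complete manifold (via finite speed of propagation) and the $L^2$-boundedness of the full operator—with a molecular/extrapolation argument in the spirit of Corollary~6.3 of \cite{AMR}, where the high vanishing order of $\tilde\psi_M$ supplies the decay needed to close the off-diagonal summations over annuli. The two conditions on $M$ are simultaneously satisfiable precisely when $N\ge\tfrac34\kappa$, which is the origin of the quantitative threshold in the hypothesis.
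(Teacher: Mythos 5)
Your formal manipulations are fine: the duality pairing, the redistribution of the factors $t^2\Delta$ across it using $d^\star\Delta_1=\Delta d^\star$ and self-adjointness, and the tent-space duality $T^{2,q}\times T^{2,q'}$ all work under \eqref{D}. But the entire weight of the theorem is then carried by the single unproven claim that $\|G^{(M)}\|_{T^{2,q'}}\leq C\|\alpha\|_{L^{q'}}$ for $1<q'<2$, where $G^{(M)}_t=td^\star(t^2\Delta_1)^Me^{-t^2\Delta_1}\alpha$, and this claim does not follow from Davies--Gaffney estimates plus a high vanishing order of the symbol, nor from anything ``in the spirit of Corollary 6.3 of \cite{AMR}''. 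Under \eqref{D} the machinery of \cite{AMR} gives the two \emph{easy} directions: $\mathcal{Q}_{d,\psi}\colon L^p\to T^{2,p}$ for $p\geq 2$ (this is Corollary 6.3, and it genuinely uses $p\geq2$) and, dually, $\mathcal{S}_{d,\tilde\psi}\colon T^{2,p}\to L^p$ for $p\leq 2$ via molecules. What you need is the opposite, hard direction: the adjoint of $\alpha\mapsto G^{(M)}$ is $H\mapsto\int_0^\infty t\,d\,(t^2\Delta)^Me^{-t^2\Delta}H_t\,\frac{dt}{t}\colon T^{2,q}\to L^q$ with $q>2$, i.e.\ an operator of exactly the same nature as $\mathcal{S}$ (with harmless extra smoothing inserted) mapping into $L^q$ above $2$ --- which is precisely the estimate the theorem is about. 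Extra vanishing of the symbol at $z=0$ does not help in this regime: the obstruction for $q'<2$ (equivalently $q>2$ on the dual side) is the absence of $L^{q'}\to L^2$, resp.\ $L^2\to L^q$, off-diagonal estimates for $d^\star e^{-t\Delta_1}$, equivalently for $\nabla e^{-t\Delta}$, and this is exactly the information carried by the hypothesis \eqref{grad_p}, which your argument never invokes (nor does it use \eqref{P}, and the restriction $q<p$ never enters).

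That your proposal uses only \eqref{D}, completeness and Davies--Gaffney is in fact a proof that it cannot be correct as stated: if the bound on $G^{(M)}$ held in that generality, your argument would give $\|\omega\|_{L^q}\leq C\|\omega\|_{H^q_{1,d}}$, hence $H^q_{1,d}=\overline{\mathcal{R}(d)\cap L^q(\Lambda^1T^\star M)}^{L^q}$, for \emph{every} $2<q<\infty$, and Corollary \ref{main2} would then make the Riesz transform bounded on $L^q$ for all $q\in(2,\infty)$ on every complete manifold satisfying \eqref{D} and \eqref{P}; this is false, e.g.\ on conical manifolds with suitable compact cross-section, where \eqref{D} and \eqref{P} hold but the Riesz transform is unbounded beyond a finite exponent. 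So the gap cannot be repaired without injecting \eqref{grad_p}, and once you do you are led essentially to the paper's route: set $F=\mathcal{Q}_N\omega$, split $\mathcal{S}F$ with $A_r=I-(I-e^{-r^2\Delta})^n$, prove the two local estimates of Proposition \ref{maxi_est} (the $L^q$ one through \eqref{grad_p} via (3.12) of \cite{ACDH}, the $L^2$ one via Davies--Gaffney and a Taylor expansion), run a good-$\lambda$ argument, and only at the end use $\|F^j\|_{T^{2,q}}\leq C\|\omega\|_{H^q_{1,d}}$, which is where the hypothesis $N\geq\frac{3}{4}\kappa$ actually enters.
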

\noindent Assuming for the moment the result of Theorem \ref{main_est}, let us give the proof of Theorem \ref{Hardy}:\\\\

\noindent{\em Proof of Theorem \ref{Hardy}:}\\\\

\noindent Fix $N\geq \frac{3}{4}\kappa$. Let us define $c>0$ by

$$c=\int_0^\infty t^{2N+2}e^{-t^2}\frac{dt}{t}.$$
Using the fact that for $\omega\in H^2_{1,d}$, 

$$\Delta_1\omega=dd^\star \omega,$$
we get by the Spectral Theorem that for every $\omega\in H^2_{1,d}$, there holds:

$$c^{-1}\mathcal{S}\mathcal{Q}_N\omega=\omega.$$
Therefore, applying Theorem \ref{main_est}, we get for $\omega\in H^2_{1,d}\cap H^q_{1,d}$ that

$$\begin{array}{rcl}
||\omega||_{L^q}&=&c^{-1}||\mathcal{S}\mathcal{Q}_N\omega||_{L^q}\\\\
&\leq& C||\omega||_{H^q_{1,d}}.
\end{array}$$
As a consequence, $H^2_{1,d}\cap H_{1,d}^q\subset\mathcal{R}(d)\cap L^q(\Lambda^1T^\star M)$. Since $H^2_{1,d}\cap H_{1,d}^q$ is dense in $H^q_{1,d}$, we get that

$$H_{1,d}^q\subset\overline{\mathcal{R}(d)\cap L^q(\Lambda^1T^\star M)}^{L^q}.$$
But by [\cite{AMR}, Corollary 6.3], the reverse inclusion $\overline{\mathcal{R}(d)\cap L^q(\Lambda^1T^\star M)}^{L^q}\subset H_{1,d}^q$ always holds. Thus,

$$H_{1,d}^q=\overline{\mathcal{R}(d)\cap L^q(\Lambda^1T^\star M)}^{L^q}.$$
This conclude the proof of Theorem \ref{Hardy}.

\cqfd
In the remaining part of this section, we prove Theorem \ref{main_est}.\\\\
\textit{Strategy of the proof of Theorem \ref{main_est}:}\\\\
We will denote $F=\mathcal{Q}_N\omega$. Our proof is inspired by the proof of [\cite{ACDH}, Theorem 1.3]. Let us explain roughly the strategy. Define the ``regularizing operator'' $A_r$ by

$$A_r=I-(I-e^{-r^2\Delta})^n,$$
where $n$ is an integer which will be chosen big enough later. We will show that for some ``maximal function" $G_\gamma$ (to be specified later), the following pair of inequalities holds for any ball $B$ of radius $r$:

$$\left(\frac{1}{|B|}\int_B |\mathcal{S}(I-A_r)F|^2\right)^{1/2}\leq C\inf_{x\in B} G_\gamma F(x)$$
and

$$\left(\frac{1}{|B|}\int_B |\mathcal{S}A_rF|^q\right)^{1/q}\leq C\inf_{x\in B} \left(\mathcal{M}\left(|\mathcal{S}F|^2\right)(x)\right)^{1/2}.$$
Her, $\mathcal{M}$ is the Hardy-Littlewood maximal function. Once this is done, the proof of Theorem \ref{main_est} follows by arguments similar to [\cite{ACDH}, Theorem 2.1] or [\cite{AC}, Proposition 3.2 and Corollary 3.3]. The rest of this section will be devoted to make the above arguments more precise. First, we define the maximal function $G_\gamma$. For $0\leq j\leq n$, define

$$F^j:=(t^2\Delta)^{N+j}e^{-t^2\Delta}(td^\star \omega).$$
Then we let

$$\mathcal{G}_\gamma H=\left(\mathcal{M}\left(|\mathcal{A}_\gamma H|^2\right)\right)^{1/2},$$
 where we recall that $\mathcal{A}_\gamma$ is the function corresponding to $\mathcal{A}$ when the aperture of the cone is chosen to be $\gamma>0$. Finally, we define

$$G_\gamma F=\sum_{0\leq j\leq n} \mathcal{G}_\gamma F^j.$$
With this settled, we prove:

\begin{Pro}\label{maxi_est}
Assume \eqref{grad_p}, $n\geq \frac{3}{4}\kappa$ and $N\geq n$. Then there is $\gamma>0$ such that for every $2<q<p$ and for every ball $B$ of radius $r$, there holds:

\begin{equation}\label{max_2}
\left(\frac{1}{|B|}\int_B |\mathcal{S}(I-A_r)F|^2\right)^{1/2}\leq C\inf_{x\in B} G_\gamma F(x)
\end{equation}
and

\begin{equation}\label{max_q}
\left(\frac{1}{|B|}\int_B |\mathcal{S}A_rF|^q\right)^{1/q}\leq C\inf_{x\in B} \left(\mathcal{M}\left(|\mathcal{S}F|^2\right)(x)\right)^{1/2}.
\end{equation}

\end{Pro}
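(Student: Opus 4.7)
The plan is to prove the two estimates \eqref{max_2} and \eqref{max_q} separately, both via a dyadic annular decomposition around $B$ combined with suitable off-diagonal bounds. Estimate \eqref{max_q} uses the $L^p$ off-diagonal bounds of Lemma \ref{ultra} (themselves a consequence of the gradient estimate \eqref{grad_p}), while estimate \eqref{max_2} uses only the $L^2$ Davies--Gaffney bounds (Lemma \ref{L2-off}) together with the extra derivatives supplied by $(I-e^{-r^2\Delta})^n$.

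\textbf{Plan for \eqref{max_q}.} The starting point is that $\mathcal{S}F$ is an exact $1$-form: pulling $\nabla$ through the $t$-integral,
\begin{equation*}
\mathcal{S}F \;=\; \int_0^\infty t\nabla e^{-t^2\Delta_0}F_t\,\tfrac{dt}{t} \;=\; \nabla g,\qquad g := \int_0^\infty e^{-t^2\Delta_0}F_t\,dt.
\end{equation*}
The intertwining $d\,e^{-s\Delta_0}=e^{-s\Delta_1}d$ then yields $\mathcal{S}A_rF = A_r^{(1)}\mathcal{S}F = \nabla A_r g = \sum_{k=1}^n c_k\nabla e^{-kr^2\Delta_0}g$ (binomial expansion), with $A_r^{(1)}$ the $1$-form analogue of $A_r$ and $c_k = (-1)^{k+1}\binom{n}{k}$. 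For each $k$, I follow the argument of Lemma \ref{Lp_off_diago}: stochastic completeness (a consequence of \eqref{D}+\eqref{P}) permits replacing $g$ by $g-g_{4B}$; splitting into annular pieces $(g-g_{4B})\chi_{C_i}$ and applying Lemma \ref{ultra} with time $r\sqrt{k}$ bounds the $L^q$-average on $B$ by $(r\sqrt{k})^{-1}e^{-c4^i/k}\,(|2^{i+1}B|^{-1}\int_{C_i}|g-g_{4B}|^2)^{1/2}$. Poincar\'e's inequality \eqref{P} on $2^{i+1}B$ converts $\|g-g_{4B}\|_{L^2(C_i)}$ into $C\cdot 2^i r\,(\mathcal{M}(|\nabla g|^2)(x))^{1/2}$ for any $x\in B$; the $r$ cancels the $1/(r\sqrt{k})$, the Gaussian weights $e^{-c4^i/k}$ make the annular sum converge, and summing over $k$ yields \eqref{max_q} since $|\mathcal{S}F|=|\nabla g|$.

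\textbf{Plan for \eqref{max_2}.} By spectral calculus, $(1-e^{-x})^n = x^n\phi_n(x)$ with $\phi_n$ bounded on $[0,\infty)$, and combined with the identity $(t^2\Delta)^n F_t = F^n_t$ plus the intertwining $d\phi_n(\Delta_0)=\phi_n(\Delta_1)d$, one obtains
\begin{equation*}
t\nabla e^{-t^2\Delta}(I-e^{-r^2\Delta})^n F_t \;=\; \bigl(\tfrac{r}{t}\bigr)^{2n}\phi_n(r^2\Delta_1)\bigl[t\nabla e^{-t^2\Delta}F^n_t\bigr].
\end{equation*}
I dualize the $L^2$-average on $B$ of $\mathcal{S}(I-A_r)F$ against a test function on $B$, split the resulting $t$-integral at $t=r$, and treat each range separately. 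For $t\leq r$, I use the original (non-factored) form together with the spectral bound $\|(I-e^{-r^2\Delta})^n\|_{2\to 2}\leq 1$, getting a bound in terms of $F^0=F$; for $t>r$, I exploit the gain $(r/t)^{2n}$ from the factored form, getting a bound in terms of $F^n$. In each regime, decompose spatially as $\sum_i (\,\cdot\,)\chi_{C_i}$ and apply $L^2$ off-diagonal bounds (Lemma \ref{L2-off}) together with the $L^2$-boundedness of $\phi_n(r^2\Delta_1)$ to extract Gaussian factors $e^{-c4^i(r\wedge t)^2/t^2}$ for far annuli. Under $n\geq\frac{3}{4}\kappa$, the gain $(r/t)^{2n}$ beats the volume growth $V(x,2^i r)\lesssim 2^{i\kappa}V(x,r)$, so the double sum over dyadic scales and annuli converges. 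Finally, the resulting tent-integrals over $T(2^iB)$ are recognized via \eqref{tent_sup2} as averages of $|\mathcal{A}_\gamma F^j|^2$ over $2^i B$ for $\gamma$ large enough, and dominated by $\inf_{x\in B}\mathcal{G}_\gamma F^j(x)\leq \inf_{x\in B}G_\gamma F(x)$.

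\textbf{Main obstacle.} The delicate step is \eqref{max_2}: the multiplier $\phi_n(r^2\Delta_1)$ on $1$-forms is only $L^2$-bounded in general (no $L^p$ regularity is available for the Hodge Laplacian on $1$-forms without Gaussian bounds $(G_1)$), forcing the whole argument to stay at the $L^2$ level; this is exactly why the target maximal function takes the form $\mathcal{G}_\gamma = (\mathcal{M}|\mathcal{A}_\gamma\cdot|^2)^{1/2}$ rather than a pointwise estimate, and why the aperture $\gamma$ has to be chosen large enough to absorb all the annular scales $2^iB$ into cones based at points of $B$. The numerical threshold $n\geq\frac{3}{4}\kappa$ is precisely the balance that makes the geometric series in scale and annulus converge.
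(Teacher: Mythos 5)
Your treatment of \eqref{max_q} is essentially the paper's: the paper also writes $\mathcal{S}A_rF=\sum_k c_k\nabla e^{-kr^2\Delta}g$ with $g=\int_0^\infty e^{-t^2\Delta}F_t\,dt$ and then simply invokes inequality (3.12) of \cite{ACDH}, which is exactly the estimate you re-derive by the annuli/Poincar\'e/Lemma \ref{ultra} argument; this part is fine. For \eqref{max_2} your algebraic starting point, the factorization $(I-e^{-r^2\Delta})^n=(r^2\Delta)^n\phi_n(r^2\Delta)$ with $\phi_n(x)=\bigl((1-e^{-x})/x\bigr)^n$, so that $(r^2\Delta)^nF_t=(r/t)^{2n}F^n_t$, is a genuinely different (and cleaner) way to extract the gain $(r/t)^{2n}$ than the paper's route, which expands $(I-e^{-r^2\Delta})^n$ binomially, changes variables, and uses a Taylor expansion with remainder of $f(s)=t\nabla e^{-t^2\Delta}F_{\sqrt{t^2-s}}/\sqrt{t^2-s}$ together with $\sum_k(-1)^k\binom{n}{k}k^l=0$, producing all the $F^j$, $0\le j\le n$, but no extra multiplier. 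The price of your shortcut is the non-local operator $\phi_n(r^2\Delta)$, and this is where the sketch has genuine gaps.

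Concretely: (a) for the local regime $t\le r$, Minkowski in $t$ plus the operator bound $\|(I-e^{-r^2\Delta})^n\|_{2\to2}\le1$ leaves you with $\int_0^r\|F_t\|_{L^2(\mathrm{near}\,B)}\frac{dt}{t}$, and $\frac{dt}{t}$ is not integrable at $t=0$; the paper avoids this by keeping the square-function structure, i.e. using the $T^{2,2}\to L^2$ boundedness of $\mathcal{S}(I-A_r)$ applied to $\chi_{T((A+4)B)}F$ for the near part, and by exploiting the Gaussian factor $e^{-c4^i(r/t)^2}$ (integrable against $\frac{dt}{t}$ near $0$) for the far annuli; your ``dualize against a test function'' could supply the needed quadratic estimate, but as written this step is missing. (b) In the regime $t>r$, invoking only the global $L^2$-boundedness of $\phi_n(r^2\Delta_1)$ destroys the spatial localization you just created: $\|\chi_B\phi_n(r^2\Delta_1)G\|_2\le\|G\|_{L^2(M)}$ carries no decay in the annulus index $i$, while the comparison with averages of $|\mathcal{A}_\gamma F^n|^2$ costs a factor of order $2^{i\kappa/2}$ by doubling, so the annulus sum is not shown to converge. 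You need off-diagonal (Gaffney) bounds for $\phi_n(r^2\Delta_1)$ — true, since $\phi_n(r^2\Delta_1)$ is an average of $e^{-s\Delta_1}$, $s\le nr^2$, but not contained in Lemma \ref{L2-off} and nowhere justified in your plan — or, better, do not commute $\phi_n$ past $\nabla$ and use off-diagonal bounds for the scalar composition $t\nabla e^{-t^2\Delta}\phi_n(r^2\Delta)$. (c) The final step, recognizing the output as ``tent-integrals over $T(2^iB)$ via \eqref{tent_sup2}'', cannot work as stated: the heights $t>r$ go to infinity while $T(2^iB)$ only reaches height $2^ir$. The paper instead compares $\frac1{|B|}\int_B$ with averages over balls $B_t$ of radius $t$ (costing $(t/r)^{\kappa/2}$) and captures the resulting regions by cones of fixed large aperture based in $B$ and in $2^{i+2}B$ (Lemma \ref{est_max_j}); it is precisely in this bookkeeping, via the weight $(r/t)^{\alpha}$ with $\alpha=2n-\kappa/2\ge\kappa$ and $\int_r^\infty(r/t)^\alpha e^{-c4^i(r/t)^2}\frac{dt}{t}\lesssim 4^{-i\alpha/2}$, that the hypothesis $n\ge\frac34\kappa$ enters. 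Your heuristic ``$(r/t)^{2n}$ beats $2^{i\kappa}$'' conflates these two distinct losses (radius change in $t$ versus annulus index $i$), so the convergence of the double sum, which is the crux of \eqref{max_2}, is asserted rather than proved.
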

Let us first show how Theorem \ref{main_est} follows, assuming the result of Proposition \ref{maxi_est}:\\

\noindent\textit{End of the proof of Theorem \ref{main_est}:}\\\\
\noindent Let us fix $2<q<p$. Proposition \ref{maxi_est} allows us to apply Proposition 3.2 and Corollary 3.3 from \cite{AC} with the choice $\mathbf{F}=|\mathcal{S}F|^2$, $\mathbf{G}_B=2|\mathcal{S}(I-A_r)F|^2$, $\mathbf{H}_B=2|\mathcal{S}A_rF|^2$ and $G=G_\gamma F $, in order to obtain

$$||\mathbf{F}||_{\frac{q}{2}}\leq ||\mathcal{M}\mathbf{F}||_{\frac{q}{2}}\leq C||G||_{\frac{q}{2}}.$$
By definition, 

$$||\mathbf{F}||_{\frac{q}{2}}=||\mathcal{S}\mathcal{Q}_N\omega||^2_{q}.$$
Using the strong $(\frac{q}{2},\frac{q}{2})$ type of the maximal operator $\mathcal{M}$, we have

$$\begin{array}{rcl}
||G||_{\frac{q}{2}}&\leq& \sum_{0\leq j\leq n} ||\mathcal{M} |\mathcal{A}_\gamma F^j|^2||_{\frac{q}{2}}\\\\
&\leq& C\sum_{0\leq j\leq n} ||\mathcal{A}_\gamma F^j||^2_{q}\\\\
&\leq& C\sum_{0\leq j\leq n}  ||F^j||_{T^{2,q}}^2,
\end{array}$$
where we have used the fact that under \eqref{D}, different angles in the definition of the tent space $T^{2,q}$ give rise to equivalent norms (see Proposition 4 in \cite{CMS}). Recall that $F^j=(t^2\Delta)^{N+j}e^{-t^2\Delta}(td^\star \omega)$. According to the results of \cite{AMR} (see Definition 5.6 therein), for every $M\geq \frac{\beta}{2}$ (recall that $\beta=[\frac{\kappa}{2}]+1$),

$$||\omega||_{H^q_{1,d}}\simeq ||(t^2\Delta)^Me^{-t^2\Delta}(td^\star \omega)||_{T^{2,q}}.$$
Since for every $0\leq j\leq n$, there holds that $N+j\geq \frac{3}{4}\kappa\geq \frac{\beta}{2}$, we get by definition of $F^j$ that 

$$\sum_{0\leq j\leq n}  ||F^j||_{T^{2,q}}^2\leq C||\omega||^2_{H^q_{1,d}}.$$
Therefore,

$$||\mathcal{S}\mathcal{Q}_N\omega||_{q}\leq C ||\omega||_{H^q_{1,d}}.$$
The proof of Theorem \ref{main_est} is complete.

\cqfd

\noindent\textit{Proof of Proposition \ref{maxi_est}:}\\\\
\noindent Let us begin by establishing \eqref{max_q}, which follows directly from results in \cite{ACDH}. Notice that $A_r$ is a sum of terms of the form $C_k e^{-kr^2\Delta}$, for $1\leq k\leq n$. It is thus enough to prove that for any $1\leq k\leq n$,

\begin{equation}\label{eq_q}
\left(\frac{1}{|B|}\int_B |\mathcal{S}e^{-kr^2\Delta}F|^q\right)^{1/q}\leq C\inf_{x\in B} \left(\mathcal{M}\left(|\mathcal{S}F|^2\right)(x)\right)^{1/2}.
\end{equation}
By definition of $\mathcal{S}$,

$$\left(\frac{1}{|B|}\int_B |\mathcal{S}e^{-kr^2\Delta}F|^q\right)^{1/q}=\left(\frac{1}{|B|}\int_B |\nabla e^{-kr^2\Delta}g|^q\right)^{1/q},$$
where

$$g:=\int_0^\infty te^{-t^2\Delta}F_t\frac{dt}{t}.$$
According to Equation (3.12) in \cite{ACDH}, there holds:

$$\left(\frac{1}{|B|}\int_B |\nabla e^{-kr^2\Delta}g|^q\right)^{1/q}\leq C \inf_{x\in B}\left(\mathcal{M}\left(|\nabla g|^2\right)(x)\right)^{1/2}.$$
But

$$\nabla g=\int_0^\infty t\nabla e^{-t^2\Delta}F_t\frac{dt}{t}=\mathcal{S}F,$$
hence \eqref{eq_q}. \\\\

Now, we will be concerned with the more difficult task of establishing \eqref{max_2}. Again, this relies on ideas developped in \cite{ACDH}. In the proof, $C$ and $c$ will design generic constants, whose value can change from a line to another.  Expanding the term $I-A_r=(I-e^{-r^2\Delta})^n$ and performing a change of variable for each term appearing in the sum, we get

\begin{equation}\label{dvp_sum}
\begin{array}{rcl}
\mathcal{S}(I-A_r)F&=&\Bint_0^\infty \left(\sum_{k=0}^n (-1)^k \binom{n}{k} \chi_{\{t>\sqrt{k}r\}}\,t\nabla e^{-t^2\Delta} \frac{F_{\sqrt{t^2-kr^2}}}{\sqrt{t^2-kr^2}}\right)\,dt\\\\
&=&\mathcal{S}_1+\mathcal{S}_2,
\end{array}
\end{equation}
where $\mathcal{S}_1$ (resp. $\mathcal{S}_2$) corresponds to the integral being taken from $0$ to $Ar$ (resp. from $Ar$ to $\infty$), where the value of $A\geq \sqrt{n+1}$ will be precised later. We will separately estimate $\left(\frac{1}{|B|}\int_B|\mathcal{S}_1|^2\right)^{1/2}$ and $\left(\frac{1}{|B|}\int_B|\mathcal{S}_2|^2\right)^{1/2}$.\\\\

\noindent \underline{Step 1:} estimate of $\left(\frac{1}{|B|}\int_B|\mathcal{S}_1|^2\right)^{1/2}$ :\\\\
We claim that

\begin{Lem}\label{S_1}

The following inequality takes place:

$$\left(\frac{1}{|B|}\int_B|\mathcal{S}_1|^2\right)^{1/2}\leq C \inf_{x\in B} \mathcal{C}F(x).$$

\end{Lem}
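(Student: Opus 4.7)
The strategy is to expand $\mathcal{S}_1=\sum_{k=0}^n(-1)^k\binom{n}{k}I_k$, perform in each term the change of variable $s=\sqrt{t^2-kr^2}$, split $F_s$ spatially into a piece near $B$ and annular far pieces, and apply the Davies--Gaffney off-diagonal estimate of Lemma~\ref{L2-off}. After the change of variable,
$$I_k=\int_0^{\sqrt{A^2-k}\,r}\nabla e^{-t_k(s)^2\Delta}F_s\,ds,\qquad t_k(s):=\sqrt{s^2+kr^2},$$
so that the singular $1/\sqrt{t^2-kr^2}$ disappears in favour of $ds$. I would then write $F_s=F_s\chi_{B^{\ast}}+\sum_{i\geq 1}F_s\chi_{D_i}$ with $B^{\ast}:=2AB$ and $D_i:=2^{i+2}AB\setminus 2^{i+1}AB$, so that $d(B,D_i)\gtrsim 2^iAr$.

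For the near piece and $k\geq 1$, the bound $\|\nabla e^{-t_k(s)^2\Delta}\|_{2,2}\leq C/t_k(s)\leq C/(\sqrt{k}\,r)$ together with Cauchy--Schwarz in $s$ and the tent-space inclusion $(0,Ar)\times B^{\ast}\subset T(4AB)$ (which combined with doubling yields $\int_0^{Ar}\!\!\int_{B^{\ast}}|F|^2\,dy\,ds/s\leq C|B|\,\mathcal{C}F(x)^2$ for any $x\in B$) produces a contribution bounded by $(CA/\sqrt{k})\,|B|^{1/2}\mathcal{C}F(x)$. For $k=0$ the $L^2$-norm of $\nabla e^{-s^2\Delta}$ blows up at $s=0$; I would instead observe that $I_0=\mathcal{S}(\chi_{(0,Ar)}F)$ and use the $T^{2,2}\to L^2$ boundedness of $\mathcal{S}$, a consequence of the standard quadratic estimate $\int_0^\infty\|t\nabla e^{-t^2\Delta}g\|_{L^2}^2\,dt/t\leq C\|g\|_{L^2}^2$ by duality (equivalent to the $H^2_{0,d^{\star}}\simeq T^{2,2}$ isomorphism recalled in Section~2.3). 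Applied to $\chi_{(0,Ar)}F\chi_{B^{\ast}}$ together with Lemma~\ref{tent_sup}, this again gives the bound $\lesssim|B|^{1/2}\mathcal{C}F(x)$.

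For the far pieces, Lemma~\ref{L2-off} gives $\|\chi_B\,t\nabla e^{-t^2\Delta}\chi_{D_i}\|_{2,2}\leq Ce^{-c\,d(B,D_i)^2/t^2}$. When $k\geq 1$, $t_k(s)\in[\sqrt{k}\,r,\sqrt{2}\,Ar]$ so the Gaussian factor is uniformly controlled by $e^{-c\,4^iA^2/2}$, and Cauchy--Schwarz together with the tent-space inclusion $(0,Ar)\times D_i\subset T(2^{i+3}AB)$ (which costs a doubling factor $(2^iA)^{\kappa/2}$) produces a contribution geometrically summable in $i$ once $A$ is fixed large enough.

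The delicate case is $k=0$, $i\geq 1$: here $t_k(s)=s$ can be arbitrarily small, so the $1/t$ factor in $\|\chi_B\nabla e^{-t^2\Delta}\chi_{D_i}\|_{2,2}\leq (C/t)\,e^{-cd^2/t^2}$ would make $\int_0^{Ar}\tfrac{1}{t}\|F_t\|_{L^2(D_i)}\,dt$ \emph{a priori} divergent. The main obstacle is to extract both the convergence in $t$ and the majorisation by $\mathcal{C}F$ from the single Gaussian weight. I would split $e^{-cd^2/t^2}=e^{-cd^2/(2t^2)}\cdot e^{-cd^2/(2t^2)}$ and apply Cauchy--Schwarz,
$$\int_0^{Ar}\tfrac{1}{t}\,e^{-cd^2/t^2}\|F_t\|_{L^2(D_i)}\,dt\leq\Bigl(\int_0^{Ar}\tfrac{dt}{t}\,e^{-cd^2/t^2}\Bigr)^{\!1/2}\Bigl(\int_0^{Ar}\tfrac{\|F_t\|_{L^2(D_i)}^2}{t}\,e^{-cd^2/t^2}\,dt\Bigr)^{\!1/2}.$$
The first factor, via the substitution $u=d^2/t^2$, equals $\bigl(\int_{4^i}^{\infty}e^{-cu}\,du/(2u)\bigr)^{1/2}\leq Ce^{-c'4^i}$, while the second is majorised by $\bigl(\int_{T(2^{i+3}AB)}|F|^2\,dy\,dt/t\bigr)^{1/2}\leq C(2^iA)^{\kappa/2}|B|^{1/2}\mathcal{C}F(x)$. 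Summing the resulting bounds $Ce^{-c'4^i}(2^iA)^{\kappa/2}|B|^{1/2}\mathcal{C}F(x)$ over $i\geq 1$ converges thanks to the super-exponential Gaussian decay, and combining with the $n+1$ contributions in $k$ gives the desired estimate $\|\mathcal{S}_1\|_{L^2(B)}\leq C|B|^{1/2}\inf_{x\in B}\mathcal{C}F(x)$.
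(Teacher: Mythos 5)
Your argument is correct (up to the harmless indexing slip that $B^{*}=2AB$ together with $D_i=2^{i+2}AB\setminus 2^{i+1}AB$ leaves the annulus $4AB\setminus 2AB$ uncovered; take $B^{*}=4AB$) and it follows essentially the same route as the paper: a spatial near/far decomposition of $F$, the Davies--Gaffney estimates of Lemma \ref{L2-off}, Cauchy--Schwarz against the Gaussian weight to handle the $k=0$ far terms, and tent-space inclusions plus doubling to dominate everything by $\inf_{x\in B}\mathcal{C}F(x)$. The only cosmetic differences are that you perform the change of variables $s=\sqrt{t^2-kr^2}$ at the outset and treat the $k\geq 1$ near pieces with the crude bound $\|\nabla e^{-t_k(s)^2\Delta}\|_{2,2}\leq C/(\sqrt{k}\,r)$, whereas the paper handles the whole near piece for all $k$ at once via the $T^{2,2}\to L^2$ boundedness of $\mathcal{S}(I-A_r)$.
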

The proof of Lemma \ref{S_1} is technical, and is postponed until the end of this section.\\\\

\noindent \underline{Step 2:} estimate of $\left(\frac{1}{|B|}\int_B|\mathcal{S}_2|^2\right)^{1/2}$ :\\\\
Let us define

$$f(s)=t\nabla e^{-t^2\Delta}\frac{F_{\sqrt{t^2-s}}}{\sqrt{t^2-s}}.$$
Then, $f(kr^2)=t\nabla e^{-t^2\Delta}\frac{F_{\sqrt{t^2-kr^2}}}{\sqrt{t^2-kr^2}}$ is the term appearing in the integrand of \eqref{dvp_sum}. We have, by the Taylor formula for $f$ around $0$:

\begin{equation}\label{Taylor}
f(kr^2)=\sum_{l=0}^{n-1}\frac{f^{(l)}(0)}{l!}k^lr^{2l}+\frac{k^nr^{2n}}{(n-1)!}\int_0^1(1-u)^{n-1}f^{(n)}(ukr^2)\,du
\end{equation}
Replacing $f(kr^2)$ by its expression given by \eqref{Taylor} in the sum $\sum_{k=0}^n(-1)^k\binom{n}{k}f(kr^2)$, and using the fact that for every integer $0\leq l\leq n-1$,

$$\sum_{k=0}^n(-1)^k\binom{n}{k} k^l=0,$$
(see [\cite{F}, Problem 16, p. 65]), we obtain

$$\left(\frac{1}{|B|}\int_B|\mathcal{S}_2|^2\right)^{1/2}\leq C\int_{Ar}^\infty r^{2n}\sup_{s\in [0, nr^2]}\left(\frac{1}{|B|}\int_B|f^{(n)}(s)|^2\right)^{1/2}\,dt.$$
Let us now compute $f^{(n)}(s)$: recalling that $F_t=(t^2\Delta)^Ne^{-t^2\Delta} td^\star \omega$, we have

$$f(s)=(t^2-s)^{-1/2}t\nabla e^{-t^2\Delta} \left((t^2-s)\Delta\right)^Ne^{-(t^2-s)\Delta} d^\star \omega,$$
and thus, assuming that $N>n$,

\begin{equation}\label{derivee}
f^{(n)}(s)=\frac{1}{(t^2-s)^{n+\frac{1}{2}}}\sum_{j\leq n} C(j,n,N) t\nabla e^{-t^2\Delta} F^j_{\sqrt{t^2-s}},
\end{equation}
where we recall that by definition, $F^j_t=(t^2\Delta)^{N+j}e^{-t^2\Delta} td^\star \omega$. We claim:

\begin{Lem}\label{est_max_j}

There exists $\gamma>1$ such that for every $H\in T^{2,2}$,

$$
\int_{Ar}^\infty r^{2n}\sup_{s\in [0, nr^2]}\frac{1}{|B|^{1/2}}\left|\left|\frac{1}{(t^2-s)^{n+\frac{1}{2}}}t\nabla e^{-t^2\Delta} H_{\sqrt{t^2-s}}\right|\right|_{L^2(B)}dt\leq C \inf_{x\in B} \mathcal{G}_\gamma H(x),
$$
(recall that $\mathcal{G}_\gamma H:=\left(\mathcal{M}\left(|\mathcal{A}_\gamma H|^2\right)\right)^{1/2}$).

\end{Lem}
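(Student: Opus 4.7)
The plan is to combine the $L^2$ off-diagonal estimates of Lemma \ref{L2-off} for the heat-semigroup gradient with an annular decomposition of $H_\tau$ around $B$, and then pass from local $L^2$ averages of $H_\tau$ to the maximal area function $\mathcal{G}_\gamma H$ via a Fubini-type argument.

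\textbf{Reduction and annular decomposition.} Choosing the constant $A$ large enough (say $A\geq\sqrt{2n}$), for $t\geq Ar$ and $s\in[0,nr^2]$ one has $\tau:=\sqrt{t^2-s}\in[t/\sqrt{2},t]$ and $(t^2-s)^{-(n+1/2)}\simeq t^{-(2n+1)}$, so it suffices to estimate
$$\int_{Ar}^\infty\frac{r^{2n}}{t^{2n+1}}\sup_{\tau\in [t/\sqrt{2},t]}\frac{1}{|B|^{1/2}}\|t\nabla e^{-t^2\Delta}H_\tau\|_{L^2(B)}\,dt.$$
With $x_B$ the center of $B$, $\tilde B_t:=B(x_B,t)$, $D_1(t):=4\tilde B_t$ and $D_i(t):=2^{i+1}\tilde B_t\setminus 2^i\tilde B_t$ for $i\geq 2$, I would decompose $H_\tau=\sum_{i\geq 1}H_\tau\chi_{D_i(t)}$. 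Since $d(B,D_i(t))\geq c\, 2^i t$ (using $t\geq Ar$), Lemma \ref{L2-off} yields
$$\|t\nabla e^{-t^2\Delta}(H_\tau\chi_{D_i(t)})\|_{L^2(B)}\leq Ce^{-c4^i}\|H_\tau\|_{L^2(D_i(t))},$$
and then the doubling property \eqref{D} bounds $|B|^{-1/2}\|H_\tau\|_{L^2(D_i(t))}$ by $C(2^i t/r)^{\kappa/2}\bigl(|2^{i+1}\tilde B_t|^{-1}\int_{2^{i+1}\tilde B_t}|H_\tau|^2\bigr)^{1/2}$.

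\textbf{Comparison with the area function.} The key step is to bound the $t$-integral of the $\tau$-sup of these normalized averages by $\inf_{x\in B}\mathcal{G}_\gamma H(x)$. Taking $\gamma\geq C\,2^i$ for the $i$-th annulus, every $x\in B$, $\tau'\in[t/\sqrt{2},t]$, and $y\in 2^{i+1}\tilde B_t$ satisfy $(\tau',y)\in\Gamma_\gamma(x)$, and Fubini together with Lemma \ref{equiv_vol} yields
$$\int_{B(x_B,2^{i+2}t)}|\mathcal{A}_\gamma H|^2\,dx\;\geq\; c\int_{t/\sqrt{2}}^t\frac{d\tau'}{\tau'}\int_{2^{i+1}\tilde B_t}|H_{\tau'}|^2\,dy.$$
Dividing by $|B(x_B,2^{i+2}t)|$ and majorizing the left side by $|B(x_B,2^{i+2}t)|\inf_{x\in B}\mathcal{M}(|\mathcal{A}_\gamma H|^2)(x)$ controls the $\tau'$-average of the normalized spatial averages by $\inf_{x\in B}\mathcal{G}_\gamma H(x)^2$. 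To trade the pointwise-in-$\tau$ sup in the integrand for this average, I would use the outer $t$-integration: the change of variables $t\leftrightarrow\tau$, which is bi-Lipschitz on the relevant intervals, allows one to dominate $\int dt\sup_\tau$ by an integral in $\tau$ of comparable measure, to which the Fubini comparison then applies. The $2^{O(i)}$ losses from the aperture $\gamma\simeq 2^i$ and from the doubling factor are absorbed by the Gaussian decay $e^{-c4^i}$.

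\textbf{Conclusion and main obstacle.} The geometric series $\sum_i 2^{O(i\kappa)}e^{-c4^i}$ converges, and the remaining $t$-integral $\int_{Ar}^\infty r^{2n-\kappa/2}t^{-2n-1+\kappa/2}\,dt\simeq 1$ is finite whenever $n>\kappa/4$, which is comfortably weaker than the standing hypothesis $n\geq\frac{3}{4}\kappa$ of Proposition \ref{maxi_est}. The main obstacle is the sup-versus-average mismatch between the integrand and the area function: the latter naturally integrates in $\tau$, while the integrand has a pointwise sup. My plan resolves this by trading the sup for an integral through the outer $t$-integration after a bi-Lipschitz change of variables, at the cost of polynomial-in-$2^i$ factors that are then absorbed by the Gaussian decay coming from the Davies–Gaffney estimate.
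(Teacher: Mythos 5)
Your decomposition is not the paper's, and it runs into a genuine obstruction precisely at the point you wave away. You cut $H_\tau$ into annuli $D_i(t)=2^{i+1}\tilde B_t\setminus 2^i\tilde B_t$ whose width scales with $t$; this does give the strong pointwise Davies--Gaffney factor $e^{-c4^i}$, but then, to see the piece living in $2^{i+1}\tilde B_t$ at height $\tau\simeq t$ from cones with vertex in $B$, you are forced to take aperture $\gamma_i\simeq 2^i$ \emph{growing with the annulus index}. The lemma, however, asserts a bound by $\inf_B\mathcal{G}_\gamma H$ for one fixed $\gamma$, and there is no pointwise change-of-aperture inequality of the form $\mathcal{G}_{\gamma_i}H(x)\leq C\,2^{O(i)}\mathcal{G}_{\gamma_0}H(x)$ (nor $\mathcal{A}_{\gamma_i}H(x)\lesssim 2^{O(i)}\bigl(\mathcal{M}(|\mathcal{A}_{\gamma_0}H|^2)(x)\bigr)^{1/2}$): by placing mass of $H$ at larger and larger heights, at angle between $\gamma_0$ and $\gamma_i$ from $x$, the ratio of the two sides can be made arbitrarily large, since the averages needed to capture high-altitude mass live on balls of unbounded radius and cannot be summed against a single maximal function. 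Only the $L^p$-norm comparison between apertures is available, so your statement that the ``$2^{O(i)}$ losses from the aperture'' are ``absorbed by the Gaussian decay'' has no pointwise meaning, and the sum you end up with is $\sum_i e^{-c4^i}2^{O(i\kappa)}\inf_B\mathcal{G}_{\gamma_i}H$, which is not the conclusion of the lemma. This is exactly the difficulty the paper's proof is engineered to avoid: there the annuli $C^i_t$ have thickness $2^ir$ (not $2^it$) and hug $4B_t$, so a \emph{fixed} aperture ($\mathcal{A}_4$, enlarged to $\mathcal{A}_{16}$ for the near piece) suffices; the price is the much weaker off-diagonal factor $e^{-c4^i(r/t)^2}$, and the geometric decay in $i$ is recovered only after integrating it against $(r/t)^\alpha\,\frac{dt}{t}$ with $\alpha=2n-\frac{\kappa}{2}$, which is where the standing hypothesis $n\geq\frac34\kappa$ (so that $\alpha\geq\kappa$ in \eqref{max_est_i}) is genuinely used. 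Your claim that $n>\kappa/4$ would suffice is a symptom of bypassing this mechanism rather than an improvement.

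A second, smaller gap is your treatment of the supremum in $s$ (equivalently in $\tau=\sqrt{t^2-s}$): a supremum over an interval is not dominated by an average over it, and the ``bi-Lipschitz change of variables'' does not repair this, because the maximizing $\tau(t)$ can concentrate on sets of tiny measure (take $H$ supported on a very short $\tau$-interval: your right-hand side degenerates while the left-hand side does not). The paper handles this differently: it fixes $s\in[0,nr^2]$ at the outset and proves bounds that are uniform in $s$ (equivalently, one keeps the Taylor-remainder integral in $u$ instead of a sup), which is what the application in Proposition \ref{maxi_est} actually requires. So while your use of Lemma \ref{L2-off} and of doubling is fine as far as it goes, the two steps you identify as the crux --- returning to a single aperture and trading the sup for an average --- are precisely the ones that are not justified, and the first of them cannot be justified pointwise at all with your choice of annuli.
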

The proof of Lemma \ref{est_max_j} is given below. Assuming it for the moment, and applying it to $H=F^j$ for every $0\leq j\leq n$, we obtain that by definition of $G_\gamma$,

\begin{equation}\label{S_2}
\left(\frac{1}{|B|}\int_B |\mathcal{S}_2|^2\right)^{1/2}\leq C\inf_{x\in B}\sum \mathcal{G}_\gamma F^j=C\inf_{x\in B}G_\gamma F(x).
\end{equation}
From Lemma \ref{S_1} and inequality \eqref{tent_max}, we get

\begin{equation}\label{S_12}
\left(\frac{1}{|B|}\int_B |\mathcal{S}_1|^2\right)^{1/2}\leq C\inf_{x\in B}\left(\mathcal{M}\left(|\mathcal{A}F|^2\right)(x)\right)^{1/2}.
\end{equation}
As consequence of \eqref{D}, since $\gamma>1$, there is a constant $C$ such that for every $F$,

$$\mathcal{A}F\leq C\mathcal{A}_\gamma F,$$
By \eqref{S_2} and \eqref{S_12}, we thus obtain \eqref{max_2}, and the proof of Proposition \ref{maxi_est} is complete.

\cqfd

\noindent\textit{Proof of Lemma \ref{S_1}:}\\

\noindent Define $C_1=4B$, and for $i\geq2$, $C_i=2^{i+1}B\setminus 2^iB$. Decomposing

$$F_t=\sum_{i=0}^\infty \chi_{C_i}F_t=\sum_{i=0}^\infty F_{i,t},$$
we have by Minkowski's inequality (with an obvious definition of $\mathcal{S}_{1,i}$ in which $F_i$ appears instead of $F$)

$$\left(\frac{1}{|B|}\int_B|\mathcal{S}_1|^2\right)^{1/2}\leq \sum_{i=1}^\infty\left(\frac{1}{|B|}\int_B|\mathcal{S}_{1,i}|^2\right)^{1/2}.$$
Our task is thus to estimate each  term of this sum. We first consider the case $i=1$. First, notice that

$$\left(\frac{1}{|B|}\int_B|\mathcal{S}_{1,i}|^2\right)^{1/2}\leq \left(\frac{1}{|B|}\int_B \left|\mathcal{S}(I-A_r)\left(\chi_{(0,Ar)\times 4B}F\right)\right|^2\right)^{1/2}.$$
It is easy to see that $(0,Ar)\times 4B\subset T((A+4)B)$, and thus

$$\left(\frac{1}{|B|}\int_B|\mathcal{S}_{1,i}|^2\right)^{1/2}\leq \left(\frac{1}{|B|}\int_B |\mathcal{S}(I-A_r)\left(\chi_{T((A+4)B)}F\right)|^2\right)^{1/2}.$$
We claim that $\mathcal{S}(I-A_r)$ is bounded from $T^{2,2}$ to $L^2$. Indeed, $\mathcal{S}$ is bounded from $T^{2,2}$ to $L^2$, and it is enough to see that $(I-A_r)$ is bounded on $T^{2,2}$. Since $I-A_r$ is a sum of terms of the form $C_ke^{-kr^2\Delta}$ for $1\leq k\leq n$, this follows from the fact that for any $1\leq k\leq n$, and every $H\in T^{2,2}$, using \eqref{T22_equiv},

$$\begin{array}{rcl}
||e^{-kr^2\Delta}H||_{T^{2,2}}&\simeq&\left(\Bint_0^\infty\Bint_M |e^{-kr^2\Delta} H_t(x)|^2\,dx\frac{dt}{t}\right)^{1/2}\\\\
&=&\left(\Bint_0^\infty ||e^{-kr^2\Delta}H_t||_2^2\frac{dt}{t}\right)^{1/2}\\\\
&\leq&\left(\Bint_0^\infty ||H_t||_2^2\frac{dt}{t}\right)^{1/2}\simeq ||H||_{T^{2,2}},
\end{array}$$
where we have used the fact that the semi-group $e^{-t\Delta}$ is contractive on $L^2$. Therefore,

$$||\mathcal{S}(I-A_r)\left(\chi_{T((A+4)B)}F\right)||_{L^2(B)}\leq C||\chi_{T((A+4)B)}F||_{T^{2,2}}.$$
Using \eqref{D}, we obtain by definition of the maximal function $\mathcal{C}$ that

$$\frac{1}{|B|^{1/2}}||\chi_{T((A+4)B)}F||_{T^{2,2}}\leq C \inf_{x\in B}\mathcal{C}F(x).$$
This implies that

$$\left(\frac{1}{|B|}\int_B|\mathcal{S}_{1,1}|^2\right)^{1/2}\leq C \inf_{x\in B}\mathcal{C}F(x).$$
Let us now turn to the case $i\geq 2$. By \eqref{dvp_sum} and Minkowski's integral inequality, we have

$$\left(\frac{1}{|B|}\int_B|\mathcal{S}_{1,i}|^2\right)^{1/2}\leq C\sum_{k=0}^n\int_{r\sqrt{k}}^{Ar}\left(\frac{1}{|B|}\int_B \left|t\nabla e^{-t^2\Delta} F_{i,{\sqrt{t^2-kr^2}}}\right|^2\right)^{1/2}\,\frac{dt}{\sqrt{t^2-kr^2}}.$$
Using the $L^2$ off-diagonal estimates for $t\nabla e^{-t^2\Delta}$ (Lemma \ref{L2-off}), we get

$$\begin{array}{rcl}
\left(\frac{1}{|B|}\Bint_B|\mathcal{S}_{1,i}|^2\right)^{1/2}&\leq& C\sum_{k=0}^n\Bint_{r\sqrt{k}}^{Ar} e^{-c4^i\left(\frac{r}{t}\right)^2} \frac{1}{|B|^{1/2}}\left|\left| F_{\sqrt{t^2-kr^2}}\right|\right|_{L^2(C_i)}\frac{dt}{\sqrt{t^2-kr^2}}\\\\
&\leq&C\Bint_0^{Ar}e^{-c4^i\left(\frac{r}{t}\right)^2}\frac{1}{|B|^{1/2}}\left|\left| F_t\right|\right|_{L^2(C_i)}\frac{dt}{t}\\\\
&&+Ce^{-c4^i}\sum_{k=1}^n\Bint_{r\sqrt{k}}^{Ar} \frac{1}{|B|^{1/2}}\left|\left| F_{\sqrt{t^2-kr^2}}\right|\right|_{L^2(C_i)}\frac{dt}{\sqrt{t^2-kr^2}}.
\end{array}$$
For $k\geq1$, we have by a change of variables and using the fact that $\frac{1}{t}\geq \frac{C}{r}$ on $(r,Ar)$:

$$\begin{array}{rcl}
\Bint_{r\sqrt{k}}^{Ar} \frac{1}{|B|^{1/2}}\left|\left| F_{\sqrt{t^2-kr^2}}\right|\right|_{L^2(C_i)}\frac{dt}{\sqrt{t^2-kr^2}}&\leq& \frac{C}{r}\Bint_0^{r\sqrt{A^2-k}}\frac{1}{|B|^{1/2}}||F_t||_{L^2(C_i)}dt\\\\
\leq\frac{C}{r}\left(\Bint_0^{Ar} tdt\right)^{1/2}&\times&\left(\Bint_0^{Ar}\frac{1}{|B|}||F_t||^2_{L^2(C_i)}\frac{dt}{t}\right)^{1/2}.
\end{array}$$
But it is easy to see that $C_i\times (0,Ar)\subset T\left((2^{i+1}+A)B\right)\subset T(A2^{i+1}B)$, and thus using (D),

\begin{equation}\label{est_CS}
\begin{array}{rcl}
\Bint_0^{Ar}\left(\frac{1}{|B|}||F_t||^2_{L^2(C_i)}\frac{dt}{t}\right)^{1/2}&\leq& \left(\frac{|A\,2^{i+1}B|}{|B|}\right)^{1/2}\\\\
&&\times\left(\frac{1}{|A2^{i+1}B|}\Bint_{T(A2^{i+1}B)}|F(t,x)|^2\,dx\frac{dt}{t}\right)^{1/2}\\\\
&\leq& C 2^{ i\kappa/2}\inf_{x\in B} \mathcal{C}F(x).
\end{array}
\end{equation}
Therefore,

$$\sum_{k=1}^n\Bint_{\sqrt{k}r}^{Ar} \frac{1}{|B|^{1/2}}\left|\left| F_{\sqrt{t^2-kr^2}}\right|\right|_{L^2(C_i)}\frac{dt}{\sqrt{t^2-kr^2}}\leq C 2^{i\kappa/2}\inf_{x\in B} \mathcal{C}F(x).$$
For $k=0$, we use successively Cauchy-Schwarz inequality, the inequality $e^{c4^i\left(\frac{r}{t}\right)^2}\leq e^{-C4^i}e^{-C4^i\left(\frac{r}{t}\right)^2}$ which holds for every $t\leq Ar$, and inequality \eqref{est_CS} in order to get:

$$\begin{array}{rcl}
\Bint_0^{Ar}e^{-c4^i\left(\frac{r}{t}\right)^2}\frac{1}{|B|^{1/2}}||F_t||_{L^2(C_i)}\frac{dt}{t}&\leq&\left(\int_0^{Ar}e^{-c4^i\left(\frac{r}{t}\right)^2}\frac{dt}{t}\right)^{1/2}\\\\
&&\times\left(\Bint_0^{Ar}\frac{1}{|B|}||F_t||^2_{L^2(C_i)}\frac{dt}{t}\right)^{1/2}\\\\
&\leq& C e^{-C4^i} 2^{i\kappa /2}\inf_{x\in B} \mathcal{C}F(x).
\end{array}$$
Finally, we obtain

$$\left(\frac{1}{|B|}\int_B|\mathcal{S}_{1,i}|^2\right)^{1/2}\leq Ce^{-C4^i}2^{i\kappa/2}\inf_{x\in B}\mathcal{C}F(x).$$
Summing on $i$, we get

$$\begin{array}{rcl}
\left(\frac{1}{|B|}\Bint_B|\mathcal{S}_1|^2\right)^{1/2}&\leq&\sum_{i=1}^\infty \left(\frac{1}{|B|}\Bint_B|\mathcal{S}_{1,i}|^2\right)^{1/2}\\\\
&\leq& C\left(\sum_{i=1}^\infty e^{-C4^i}2^{i\kappa/2}\right)\inf_{x\in B}\mathcal{C}F(x)\\\\
&\leq& C\inf_{x\in B}\mathcal{C}F(x).
\end{array}$$
This concludes the proof of Lemma \ref{S_1}.

\cqfd

\noindent\textit{Proof of Lemma \ref{est_max_j}:}\\

\noindent First, since $\frac{1}{t^2-nr^2}\leq C\frac{1}{t^2}$ for every $t>Ar$, we get

$$\begin{array}{rcl}
&\Bint_{Ar}^\infty& r^{2n}\sup_{s\in [0, nr^2]}\frac{1}{|B|^{1/2}}\left|\left|\frac{1}{(t^2-s)^{n+\frac{1}{2}}}t\nabla e^{-t^2\Delta} H_{\sqrt{t^2-s}}\right|\right|_{L^2(B)}dt\\\\
&\leq& C\Bint_{Ar}^\infty \left(\frac{r}{t}\right)^{2n} \sup_{s\in [0,nr^2]}\frac{1}{|B|^{1/2}}\left|\left|t\nabla e^{-t^2\Delta}H_{\sqrt{t^2-s}}\right|\right|_{L^2(B)}\frac{dt}{t}.
\end{array}$$
From now on, we fix $s\in [0,nr^2]$. Let $\lambda\in(0,1)$ be such that $A\lambda\geq 1$ (the value of $\lambda$ will be precised later). Denote $B_t:=\frac{t}{r} B=B(x_B,t)$, then for every $t\geq Ar$ and for every real function $\varphi$, there holds, using \eqref{D}, 

\begin{equation}\label{B_t}
\begin{array}{rcl}
\frac{1}{|B|^{1/2}}||\varphi||_{L^2(B)}&\leq& \left(\frac{|\lambda B_t|}{|B|}\right)^{1/2}\frac{1}{|\lambda B_t|^{1/2}}||\varphi||_{L^2(\lambda B_t)}\\\\
&\leq& C\left(\frac{t}{r}\right)^{\kappa/2}\frac{1}{|\lambda B_t|^{1/2}}||\varphi||_{L^2(\lambda B_t)}.
\end{array}
\end{equation}
Applying inequality \eqref{B_t} with $\varphi=t\nabla e^{-t^2\Delta}H_{\sqrt{t^2-s}}$, we obtain, denoting $\alpha=2n-\frac{\kappa}{2}$,

$$\begin{array}{rcl}
&\Bint_{Ar}^\infty& \left(\frac{r}{t}\right)^{2n} \sup_{s\in [0,nr^2]}\frac{1}{|B|^{1/2}}\left|\left|t\nabla e^{-t^2\Delta}H_{\sqrt{t^2-s}}\right|\right|_{L^2(B)}\frac{dt}{t}\\\\
&\leq& C\Bint_{Ar}^\infty\left(\frac{r}{t}\right)^\alpha\frac{1}{|\lambda B_t|^{1/2}}\left|\left|t\nabla e^{-t^2\Delta}H_{\sqrt{t^2-s}}\right|\right|_{L^2(\lambda B_t)}\frac{dt}{t}
\end{array}.$$
Denote $C^1_t=B(x_B,4t+4r)$ and for $i\geq 2$, 

$$C^i_t=B(x_B,4t+2^{i+1}r)\setminus B(x_B,4t+2^{i}r).$$
Notice that for every $i\geq 2$,

$$d(C_t^i,4B_t)=2^ir.$$
We decompose

$$H_t=\sum_{i=1}^\infty\chi_{C_t^i} H_t=\sum_{i=1}^\infty H_{i,t}.$$
We have by Minkowski's inequality

$$\begin{array}{rcl}
&\Bint_{Ar}^\infty&\left(\frac{r}{t}\right)^\alpha\frac{1}{|\lambda B_t|^{1/2}}\left|\left|t\nabla e^{-t^2\Delta}H_{\sqrt{t^2-s}}\right|\right|_{L^2(\lambda B_t)}\frac{dt}{t}\\\\
&\leq&\sum_{i=1}^\infty  \Bint_{Ar}^\infty\left(\frac{r}{t}\right)^\alpha\frac{1}{|\lambda B_t|^{1/2}}\left|\left|t\nabla e^{-t^2\Delta}H_{i,\sqrt{t^2-s}}\right|\right|_{L^2(\lambda B_t)}\frac{dt}{t}
\end{array}$$
It is enough to estimate each term in the above sum. We begin with the case $i=1$, for which the choice of $\lambda$ has no importance. Using the fact that for $t\geq r$, $H_{1,t}$ is supported in $C_t^1\subset 8B_t$ and the uniform boundedness on $L^2$ of $t\nabla e^{-t^2\Delta}$, we get

$$\begin{array}{rcl}
\left|\left|t \nabla e^{-t^2\Delta}H_{1,\sqrt{t^2-s}}\right|\right|_{L^2(\lambda B_t)}&\leq& \left|\left|t \nabla e^{-t^2\Delta}H_{1,\sqrt{t^2-s}}\right|\right|_2\\\\
&\leq &C \left|\left| H_{1,\sqrt{t^2-s}}\right|\right|_2=C \left|\left| H_{\sqrt{t^2-s}}\right|\right|_{L^2(8B_{\sqrt{t^2-s}})}.
\end{array}$$
By a change of variable and \eqref{D}, we obtain

$$\begin{array}{rcl}
&\Bint_{Ar}^{\infty}&\left(\frac{r}{t}\right)^\alpha\frac{1}{| \lambda B_t|^{1/2}}\left|\left|t\nabla e^{-t^2\Delta}H_{1,\sqrt{t^2-s}}\right|\right|_2\frac{dt}{t}\\\\
&\leq& C\Bint_{\sqrt{A^2r^2-s}}^\infty \left(\frac{r}{t}\right)^\alpha \frac{1}{|8B_t|^{1/2}}||H_t||_{L^2(8B_t)}\frac{dt}{t}\\\\
&\leq&C\Bint_r^\infty \left(\frac{r}{t}\right)^\alpha \frac{1}{|8B_t|^{1/2}}||H_t||_{L^2(8B_t)}\frac{dt}{t}
\end{array}$$
Applying Cauchy-Schwarz's inequality,

$$\begin{array}{rcl}
&\Bint_{Ar}^{\infty}&\left(\frac{r}{t}\right)^\alpha\frac{1}{| \lambda B_t|^{1/2}}\left|\left|t\nabla e^{-t^2\Delta}H_{1,\sqrt{t^2-s}}\right|\right|_2\frac{dt}{t}\\\\
&\leq& \left(\Bint_r^\infty \left(\frac{r}{t}\right)^{2\alpha}\frac{dt}{t}\right)^{1/2}\left(\Bint_r^\infty \frac{1}{|8B_t|}||H_t||^2_{L^2(8B_t)}\frac{dt}{t}\right)^{1/2}\\\\
&\leq &C\left(\Bint_r^\infty \frac{1}{|8B_t|}||H_t||^2_{L^2(8B_t)}\frac{dt}{t}\right)^{1/2}
\end{array}$$
We claim that 

\begin{equation}\label{est_max_0}
\int_r^\infty \frac{1}{|8B_t|}||H_t||^2_{L^2(8B_t)}\frac{dt}{t}\leq C\inf_{x\in B}\left(\mathcal{M}\left(|\mathcal{A}_{16}H|^2\right)(x)\right)^{1/2}.
\end{equation}
Indeed, let us compute

$$\frac{1}{|B|}\int_B |\mathcal{A}_{16}H|^2=\frac{1}{|B|}\int_B\int_0^\infty\frac{dt}{t}\frac{1}{V(x,16t)}\int_{B(x,16t)}|H|^2.$$
Notice that for every $x\in B$ and every $t>r$, $8B_t\subset B(x,16t)$. Also, by Lemma \ref{equiv_vol}, $V(x,16t)\simeq |8B_t|$. Therefore, 

$$\inf_{x\in B}\mathcal{M}\left(|\mathcal{A}_{16}H|^2\right)(x)\geq \frac{1}{|B|}\int_B |\mathcal{A}_{16}H|^2\geq C\int_r^\infty \frac{dt}{t}\frac{1}{|8B_t|}\int_{8B_t}|H(t,y)|^2\,dy,$$
which is what has been claimed. Consequently,

$$\int_{Ar}^\infty\left(\frac{r}{t}\right)^\alpha\frac{1}{| \lambda B_t|^{1/2}}\left|\left|t\nabla e^{-t^2\Delta}H_{1,\sqrt{t^2-s}}\right|\right|_{L^2(\lambda B_t)}\frac{dt}{t}\leq C \inf_{x\in B}\left(\mathcal{M}\left(|\mathcal{A}_{16}H|^2\right)(x)\right)^{1/2}.$$
Let us now turn to the case $i\geq2$. By a change of variables, we get

$$\begin{array}{rcl}
&\Bint_{Ar}^\infty&\left(\frac{r}{t}\right)^\alpha\frac{1}{|\lambda B_t|^{1/2}}\left|\left|t\nabla e^{-t^2\Delta}H_{i,\sqrt{t^2-s}}\right|\right|_{L^2(\lambda B_t)}\frac{dt}{t}\\\\
&\leq C\Bint_{Ar-\sqrt{s}}^\infty& \left(\frac{r}{t}\right)^\alpha\frac{1}{|\lambda B_{\sqrt{t^2+s}}|^{1/2}}\left|\left|\sqrt{t^2+s}\nabla e^{-(t^2+s)\Delta}H_{i,t}\right|\right|_{L^2\left(\lambda B_{\sqrt{t^2+s}}\right)}\frac{dt}{t}
\end{array}$$
But there is a constant $c$ such that for every $t\geq r$ and every $s\in[0,nr^2]$, $B_{\sqrt{t^2+s}}\subset B_{ct}$. Choosing $\lambda\leq c^{-1}$ (and $A$ big enough so that $A\lambda\geq1$), and using \eqref{D}, we get

$$\begin{array}{rcl}
&\Bint_{Ar}^\infty&\left(\frac{r}{t}\right)^\alpha\frac{1}{|\lambda B_t|^{1/2}}\left|\left|t\nabla e^{-t^2\Delta}H_{i,\sqrt{t^2-s}}\right|\right|_{L^2(\lambda B_t)}\frac{dt}{t}\\\\
&\leq& C\Bint_{r}^\infty \left(\frac{r}{t}\right)^\alpha\frac{1}{| B_t|^{1/2}}\left|\left|\sqrt{t^2+s}\nabla e^{-(t^2+s)\Delta}H_{i,t}\right|\right|_{L^2\left( B_t\right)}\frac{dt}{t}
\end{array}$$
Using the $L^2$ off-diagonal estimates (Lemma \ref{L2-off}) and Cauchy-Schwarz, we obtain

$$\begin{array}{rcl}

&\Bint_{Ar}^\infty&\left(\frac{r}{t}\right)^\alpha\frac{1}{|\lambda B_t|^{1/2}}\left|\left|t\nabla e^{-t^2\Delta}H_{i,\sqrt{t^2-s}}\right|\right|_{L^2(\lambda B_t)}\frac{dt}{t}\\\\
&\leq& \Bint_r^\infty \left(\frac{r}{t}\right)^\alpha\frac{1}{| B_t|^{1/2}} ||H_{t}||_{L^2\left(C_t^i\right)}e^{-c4^i\left(\frac{r}{t}\right)^2}\frac{dt}{t}\\\\
&\leq& \left(\Bint_r^\infty \left(\frac{r}{t}\right)^\alpha\frac{1}{| B_t|} ||H_{t}||^2_{L^2\left(C_t^i\right)}\frac{dt}{t}\right)^{1/2}\left(\int_r^\infty \left(\frac{r}{t}\right)^\alpha e^{-c4^i\left(\frac{r}{t}\right)^2}\frac{dt}{t}\right)^{1/2}\\\\
&\leq& C4^{-\alpha i} \left(\Bint_r^\infty \left(\frac{r}{t}\right)^\alpha\frac{1}{| B_t|} ||H_{t}||^2_{L^2\left(C_t^i\right)}\frac{dt}{t}\right)^{1/2}
\end{array}$$
We claim that for $\alpha\geq\kappa$,

\begin{equation}\label{max_est_i}
\left(\int_r^\infty \left(\frac{r}{t}\right)^\alpha\frac{1}{| B_t|} ||H_t||^2_{L^2\left(C_t^i\right)}\frac{dt}{t}\right)^{1/2}\leq C 2^{i\kappa/2} \inf_{x\in B}\left(\mathcal{M}\left(|\mathcal{A}_4H|^2\right)(x)\right)^{1/2}.
\end{equation}
Indeed, we compute that

$$\int_{2^{i+2}B}|\mathcal{A}_4H|^2\geq \int_r^\infty \frac{dt}{t}\int_{2^{i+2}B} dx\int_{B(x,4t)}|H(t,y)|^2\frac{dy}{V(y,4t)}.$$
For every $(t,y)\in C_t^i$, there exists $y'\in B(y,4t)$ such that $B\left(y',\frac{r}{2}\right)\subset 2^{i+2}B$ and for every $z\in B\left(y',\frac{r}{2}\right)$, $y\in B(z,4t)$: just take $y'$ lying on a minimizing geodesic going from $x_B$ to $y$, such that $d(y,y')=4t-\frac{r}{2}$. Therefore,

$$\int_{2^{i+2}B}|\mathcal{A}_4H|^2\geq\int_r^\infty \frac{dt}{t}\int_{C^i_t} |H(t,y)|^2\frac{V\left(y',\frac{r}{2}\right)}{V(y,4t)}\,dy.$$
But by Lemma \ref{equiv_vol}, $V(y',4t)\simeq V(y,4t)$, and therefore using \eqref{D}, we get

$$\int_{2^{i+2}B}|\mathcal{A}_4H|^2\geq C\int_r^\infty  \left(\frac{r}{t}\right)^\kappa||H_t||^2_{L^2(C_t^i)}\frac{dt}{t}.$$
By \eqref{D}, if $t\geq r$,

$$\begin{array}{rcl}
\frac{1}{|2^{i+2}B|}&=&\frac{|B_t|}{|2^{i+2}B|}\frac{1}{|B_t|}\\\\
&\geq&\min\left(1,\left(\frac{t}{2^{i+2}r}\right)^\kappa\right)\frac{1}{|B_t|}\\\\
&\geq& C\frac{2^{-i\kappa}}{|B_t|}
\end{array}$$
Therefore, if $\alpha\geq \kappa$,

$$\begin{array}{rcl}
\left(\Bint_r^\infty \left(\frac{r}{t}\right)^\alpha\frac{1}{| B_t|} ||H_t^i||^2_{L^2\left(C_t^i\right)}\frac{dt}{t}\right)^{1/2}&\leq& 2^{i\kappa/2} \left(\frac{1}{|2^{i+2}B|}\Bint_{2^{i+2}B}|\mathcal{A}_4H|^2\right)^{1/2}\\\\
&\leq & C 2^{i\kappa/2} \inf_{x\in B}\left(\mathcal{M}\left(|\mathcal{A}_4H|^2\right)(x)\right)^{1/2},
\end{array}$$
which ends the proof of inequality \eqref{max_est_i}. Now, we can finally prove Lemma \ref{est_max_j}. Indeed, by \eqref{est_max_0} and \eqref{max_est_i}, we have (for our choosen values of $\lambda$ and $A$)

$$\begin{array}{rcl}
&\sum_{i=1}^\infty&  \int_{Ar}^\infty\left(\frac{r}{t}\right)^\alpha\frac{1}{|\lambda B_t|^{1/2}}\left|\left|t\nabla e^{-t^2\Delta}H^i_{\sqrt{t^2-s}}\right|\right|_{L^2(\lambda B_t)}\frac{dt}{t}\\\\
&\leq& C\inf_{x\in B}\left(\mathcal{M}\left(|\mathcal{A}_{16}H|^2\right)(x)\right)^{1/2}\\\\
&&+\left(\sum_{i=2}^\infty 2^{i\kappa/2}4^{-\alpha i} \right)\inf_{x\in B}\left(\mathcal{M}\left(|\mathcal{A}_4H|^2\right)(x)\right)^{1/2}
\end{array}$$
Thus, since $\alpha>\frac{\kappa}{4}$, we get

$$\begin{array}{rcl}
&\sum_{i=1}^\infty&  \int_{Ar}^\infty\left(\frac{r}{t}\right)^\alpha\frac{1}{|\lambda B_t|^{1/2}}\left|\left|t\nabla e^{-t^2\Delta}H^i_{\sqrt{t^2-s}}\right|\right|_{L^2(\lambda B_t)}\frac{dt}{t}\\\\
&\leq& C\inf_{x\in B}\left(\mathcal{M}\left(|\mathcal{A}_{16}H|^2\right)(x)\right)^{1/2}+C\inf_{x\in B}\left(\mathcal{M}\left(|\mathcal{A}_4H|^2\right)(x)\right)^{1/2}
\end{array}$$
By an easy consequence of \eqref{D}, there is a constant $C$ such that for every $H$,

$$\mathcal{A}_4H\leq C\mathcal{A}_{16}H.$$
Therefore,

$$\sum_{i=1}^\infty  \int_{Ar}^\infty\left(\frac{r}{t}\right)^\alpha\frac{1}{|\lambda B_t|^{1/2}}\left|\left|t\nabla e^{-t^2\Delta}H^i_{\sqrt{t^2-s}}\right|\right|_{L^2(\lambda B_t)}\frac{dt}{t}\leq C\inf_{x\in B}\left(\mathcal{M}\left(|\mathcal{A}_{16}H|^2\right)(x)\right)^{1/2}.$$
This shows Lemma \ref{est_max_j}, with $\gamma=16$.

\cqfd

\section{Appendix}
In this Appendix, we prove Proposition \ref{bound_result}. The proof is an adaptation of the proof of [\cite{AC}, Theorem 2.3]. It relies on the following:

\begin{Pro}\label{good}

Let $F$ and $G$ be given, such that for every ball $B$, one can find non-negative, measurable functions $G_B$ and $H_B$ such that

$$F= G_B+H_B\qquad \mbox{ a.e. on }T(B),$$
and satisfying

$$\left(\frac{1}{|B|}\int_B\left\{\mathcal{A}(\chi_{T(B)}H_B)\right\}^p\right)^{1/p}\leq C\left(\inf_{x\in B}\mathcal{C}F(x)+\inf_{x\in B}G(x)\right),$$
and

$$\left(\frac{1}{|B|}\int_{B}\left\{\mathcal{A}\left(\chi_{T(B)}G_B\right)\right\}^2\right)^{1/2}\leq C\inf_{x\in B}G(x).$$
For $2<q<p$, there is a constant $C$ independant of the choice of $F$ and $G$ such that if $||G||_q<\infty$ and $||F||_{T^{2,2}}<\infty$, then

$$||F||_{T^{2,q}}\leq C||G||_q.$$

\end{Pro}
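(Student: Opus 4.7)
The plan is to derive a good-$\lambda$ inequality comparing $\mathcal{A}F$ with $G$ and the auxiliary maximal function $\mathcal{C}F$ via a Whitney/Calder\'{o}n--Zygmund decomposition, integrate it to control $||\mathcal{A}F||_q$, and finally use Theorem \ref{CMS} to absorb the resulting $\mathcal{C}F$-contribution. The scheme follows the general approach of \cite{AC}, Theorem 2.3, adapted to the tent-space framework.

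Fix $\lambda>0$ and consider the open set $O_\lambda:=\{\mathcal{M}(|\mathcal{A}F|^2)>\lambda^2\}$, which has finite measure thanks to $F\in T^{2,2}$ and the weak-$(1,1)$ bound for $\mathcal{M}$. A Whitney-type covering $O_\lambda=\bigcup_i B_i$ with bounded overlap and with a fixed dilate $\alpha B_i$ meeting $O_\lambda^c$ yields, for each $i$, the average control $\frac{1}{|\alpha B_i|}\int_{\alpha B_i}|\mathcal{A}F|^2\leq C\lambda^2$. Apply the hypothesis to the enlargement $\tilde B_i:=\alpha B_i$ to write $F=G_{\tilde B_i}+H_{\tilde B_i}$ on $T(\tilde B_i)$. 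For $x\in B_i$ one decomposes
$$\mathcal{A}F(x)\leq \mathcal{A}(\chi_{T(\tilde B_i)}G_{\tilde B_i})(x)+\mathcal{A}(\chi_{T(\tilde B_i)}H_{\tilde B_i})(x)+\mathcal{A}(\chi_{T(\tilde B_i)^c}F)(x),$$
where the third term is bounded by $C\,\mathcal{C}F(x)$ since any $(t,y)\in\Gamma(x)$ with $(t,y)\notin T(\tilde B_i)$ must have $t$ at least comparable to the radius of $\tilde B_i$, so that the corresponding cone segment lies in a tent over a ball containing $x$.

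On the set $\{x\in B_i:G(x)+\mathcal{C}F(x)\leq \gamma\lambda\}$ the infima on the right of the two hypothesis bounds are $\leq C\gamma\lambda$, so Chebyshev's inequality in $L^2$ (for the $G_{\tilde B_i}$-piece) and in $L^p$ (for the $H_{\tilde B_i}$-piece), followed by summation over the bounded-overlap family $\{B_i\}$ and the observation $\{\mathcal{A}F>K\lambda\}\subset O_\lambda$ valid for $K\geq 1$, yields the good-$\lambda$ inequality
$$|\{\mathcal{A}F>K\lambda,\,G+\mathcal{C}F\leq \gamma\lambda\}|\leq \epsilon(K,\gamma)\,|\{\mathcal{M}(|\mathcal{A}F|^2)^{1/2}>\lambda\}|,$$
with $\epsilon(K,\gamma)\to 0$ as $\gamma\to 0$ for fixed $K$ large. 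Multiplying by $\lambda^{q-1}d\lambda$, integrating, and using the strong $(q/2,q/2)$ boundedness of $\mathcal{M}$ to replace $\mathcal{M}(|\mathcal{A}F|^2)^{1/2}$ by $\mathcal{A}F$ on the right, one arrives at
$$||\mathcal{A}F||_q\leq C_\gamma\bigl(||G||_q+||\mathcal{C}F||_q\bigr)+\eta(\gamma)\,||\mathcal{A}F||_q,\qquad \eta(\gamma)\to 0.$$

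Invoking Theorem \ref{CMS} to write $||\mathcal{C}F||_q\simeq ||\mathcal{A}F||_q$, this becomes $||\mathcal{A}F||_q\leq C_\gamma||G||_q+\eta'(\gamma)||\mathcal{A}F||_q$, and absorbing the last term for $\gamma$ small enough concludes the proof. The main obstacle is exactly this absorption step: since the hypotheses force $\mathcal{C}F$ into the good-$\lambda$ inequality, and $||\mathcal{C}F||_q$ is $L^q$-equivalent to the target $||\mathcal{A}F||_q$, the Chebyshev/Whitney step must be set up so that the coefficient of $||\mathcal{A}F||_q$ on the right can be made arbitrarily small by shrinking $\gamma$ (while $C_\gamma$ is allowed to blow up). A standard truncation -- carrying out the argument on $\min(\mathcal{A}F,N)$ or equivalently restricting the $\lambda$-integration to $[0,N]$, then letting $N\to\infty$ -- handles the \emph{a priori} finiteness of $||\mathcal{A}F||_q$ required for the absorption, using only $\mathcal{A}F\in L^2$ and $q>2$.
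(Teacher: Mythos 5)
Your good-$\lambda$ scheme is in the right family of ideas, but two of its steps fail, and they are precisely the points the paper's proof is organised to avoid. The first and most serious gap is the pointwise bound $\mathcal{A}(\chi_{T(\tilde B_i)^c}F)(x)\leq C\,\mathcal{C}F(x)$ for $x\in B_i$. It is true that $(t,y)\in\Gamma(x)\setminus T(\tilde B_i)$ forces $t\gtrsim r_i$, but the truncated cone $\{(t,y):y\in B(x,t),\ t\geq c\,r_i\}$ is unbounded in $t$ and is not contained in any tent (nor in a bounded number of them), so it cannot be compared pointwise with tent averages. Already on $\R^n$: take $F(t,y)=\chi_{B(x,t)}(y)$ for $t$ in $N$ consecutive dyadic slabs $[2^k,2^{k+1}]$ above height $r_i$, and $F=0$ otherwise. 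Then $\mathcal{A}(\chi_{t\geq c r_i}F)(x)\simeq\sqrt N$, while for any ball $B'\ni x$ of radius $R$ one has $\frac{1}{|B'|}\int_{T(B')}|F|^2\,dy\frac{dt}{t}\lesssim R^{-n}\sum_{2^k\leq R}2^{kn}\lesssim 1$, so $\mathcal{C}F(x)\simeq 1$ uniformly in $N$. This is exactly the ``localization'' difficulty: no pointwise control of the far part of $\mathcal{A}F$ (or of $\mathcal{G}F=(\mathcal{M}(|\mathcal{A}F|^2))^{1/2}$) by $\mathcal{C}F$ is available. The paper circumvents it by running the entire good-$\lambda$ argument on level sets of $\mathcal{C}F$ itself and proving the Localization Lemma \ref{loc} (valid for $\mathcal{C}$, and, as the closing Remark of the Appendix stresses, false for the $\mathcal{A}$-- or $\mathcal{G}$--based functionals): if some $\bar x\in B$ satisfies $\mathcal{C}F(\bar x)\leq\lambda$, then $\{\chi_B\,\mathcal{C}F>K\lambda\}\subset\{\mathcal{C}(\chi_{T(3B)}F)>K\lambda/K_0\}$, so the far part of $F$ is discarded without any pointwise domination.

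The second gap is the absorption step, which you flag but do not resolve. Since you place $\mathcal{C}F\leq\gamma\lambda$ in the good set, the layer-cake integration of the complementary event produces a term $\gamma^{-q}\|\mathcal{C}F\|_q^q$, i.e.\ a constant that blows up as $\gamma\to0$; as $\|\mathcal{C}F\|_q\simeq\|\mathcal{A}F\|_q$ by Theorem \ref{CMS}, the passage from $C_\gamma(\|G\|_q+\|\mathcal{C}F\|_q)$ to $C_\gamma\|G\|_q+\eta'(\gamma)\|\mathcal{A}F\|_q$ is a non sequitur, and the term cannot be absorbed. The paper's set-up removes this circularity structurally: the Whitney decomposition is taken for $E_\lambda=\{\mathcal{C}F>\lambda\}$, so each dilated Whitney ball contains a point $\bar x_i$ with $\mathcal{C}F(\bar x_i)\leq\lambda$; hence the $\inf_B\mathcal{C}F$ occurring in the hypothesis on $H_B$ is automatically $\leq\lambda$ and enters only through the harmless factor $K^{-p}$, the good set involves $G$ alone, and the good-$\lambda$ inequality $|\{\mathcal{C}F>K\lambda,\,G\leq\gamma\lambda\}|\leq C(K^{-p}+\gamma^2K^{-2})|\{\mathcal{C}F>\lambda\}|$ integrates (with the truncation $\Phi(t)$, finite thanks to $\|F\|_{T^{2,2}}<\infty$) to $\|\mathcal{C}F\|_q\leq C\|G\|_q$, the equivalence $\|F\|_{T^{2,q}}\simeq\|\mathcal{C}F\|_q$ being invoked only at the very end. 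To repair your argument you would essentially have to reproduce this $\mathcal{C}$-based localization rather than work with $\mathcal{A}F$ directly.
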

Assuming for the moment the result of Proposition \ref{good}, let us conclude the proof of Proposition \ref{bound_result}. \\\\
\textit{Proof of Proposition \ref{bound_result}:}\\\\
Define $F=T f$, $G_B=T(\chi_{\alpha B}f)$ and $H_B=T\left((1-\chi_{\alpha B})f\right)$. Define finally $G=\left(\mathcal{M}(f^2)\right)^{1/2}$. By linearity of $T$, one has 

$$F= G_B+H_B.$$
By hypothesis,

$$\left(\frac{1}{|B|}\int_B\left\{\mathcal{A}(\chi_{T(B)}H_B)\right\}^p\right)^{1/p}\leq C\left(\inf_{x\in B}\mathcal{C}F(x)+\inf_{x\in B}G(x)\right).$$
We claim that there also holds that

\begin{equation}\label{G_B}
\left(\frac{1}{|B|}\int_{B}\left\{\mathcal{A}\left(\chi_{T(B)}G_B\right)\right\}^2\right)^{1/2}\leq C\inf_{x\in B}G(x).
\end{equation}
Once \eqref{G_B} is shown, the proof of Proposition \ref{bound_result} is completed by applying Proposition \ref{good} to our choice of $F$, $G_B$, $H_B$ and $G$. Indeed, Proposition \ref{good} then gives

$$||F||_{T^{2,q}}\leq C||G||_q.$$
Using the strong $(\frac{q}{2},\frac{q}{2})$ type of the Hardy-Littlewood maximal function, one has

$$||G||_q=\left|\left|\left(\mathcal{M}(f^2)\right)^{1/2}\right|\right|_q\leq C||f||_q,$$
and thus

$$||F||_{T^{2,q}}\leq C||f||_q.$$
By definition of $F$ we obtain

$$||Tf||_{T^{2,q}}\leq C||f||_q.$$
This concludes the proof of Proposition \ref{bound_result}, upon proving inequality \eqref{G_B}. We now proves \eqref{G_B}. Applying successively \eqref{tent_sup2} and \eqref{T22_equiv}, one has

$$\begin{array}{rcl}
\left(\frac{1}{|B|}\int_B\mathcal{A}\left(\chi_{T(B)}G_B\right)^2\right)^{1/2}&\leq& C\left(\frac{1}{|B|}\int_{T(B)} |G_B(t,y)|^2\,dy\frac{dt}{t}\right)^{1/2}\\\\
&\leq& \frac{C}{|B|^{1/2}}||G_B||_{T^{2,2}}.
\end{array}$$
By definition of $G_B$, the fact that $T$ is bounded from $L^2$ to $T^{2,2}$, and \eqref{D}, one has

$$\begin{array}{rcl}
\left(\frac{1}{|B|}\int_B\mathcal{A}\left(\chi_{T(B)}G_B\right)^2\right)^{1/2}&\leq& \frac{C}{|B|^{1/2}} ||\chi_{\alpha B} f||_2\\\\
&\leq&C\left(\frac{1}{|\alpha B|}\int_{\alpha B} |f|^2\right)^{1/2}.
\end{array}$$
Recalling that $G=\left(\mathcal{M}\left(f^2\right)\right)^{1/2}$, we obtain

$$\left(\frac{1}{|B|}\int_B\mathcal{A}\left(\chi_{T(B)}G_B\right)^2\right)^{1/2}\leq \inf_{x\in B} G(x).$$
This completes the proof of \eqref{G_B}, and of Proposition \ref{bound_result}.

\cqfd
It only remains to prove Proposition \ref{good}. We split the proof in a sequence of lemmas. First, we begin by a localization lemma:

\begin{Lem}[Localization]\label{loc}
There exists $K_0$ depending only on the doubling constant, with the following property: if $\bar{x}\in B$ is such that $\mathcal{C}F(\bar{x})\leq \lambda$, then for every $K\geq K_0$,

$$\left\{\chi_B\,\mathcal{C}F>K\lambda\right\}\subset \left\{\mathcal{C}\left(\chi_{T(3B)}F\right)>\frac{K}{K_0}\lambda\right\}.$$

\end{Lem}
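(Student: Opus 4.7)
The plan is to prove the inclusion pointwise by a geometric dichotomy on the tent that realises the largeness of $\mathcal{C}F$. Fix $x$ in the left-hand set, so $x\in B$ and $\mathcal{C}F(x)>K\lambda$; by definition of $\mathcal{C}$ there exists a ball $B'=B(x',r')$ containing $x$ with
\[
\frac{1}{|B'|}\iint_{T(B')}|F(t,y)|^2\,dy\frac{dt}{t} \;>\; K^2\lambda^2.
\]
First I would split on whether $B'\subset 3B$. In that good case, the tent inclusion $T(B')\subset T(3B)$ gives $\chi_{T(3B)}F=F$ on $T(B')$, so using $B'$ itself as a test tent in the definition of $\mathcal{C}(\chi_{T(3B)}F)(x)$ immediately yields $\mathcal{C}(\chi_{T(3B)}F)(x)>K\lambda$, which is strictly stronger than what the lemma asks for.

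The whole content of the lemma is therefore to rule out the bad case $B'\not\subset 3B$ once $K$ is large. Here the key observation is a short triangle-inequality computation: picking $y\in B'\setminus 3B$ and writing $r$ for the radius of $B$, the inequalities $d(y,x_B)\geq 3r$, $d(x,x_B)<r$ and $d(y,x)<2r'$ force $r'>r$, and then $d(z,x')<2r+r'<3r'$ for every $z\in B$ forces $B\subset 3B'$. In particular $\bar x\in 3B'$, so the hypothesis $\mathcal{C}F(\bar x)\leq \lambda$ applied with the tent $T(3B')$, together with $T(B')\subset T(3B')$ and the doubling estimate $|3B'|\leq C_d|B'|$, gives
\[
\frac{1}{|B'|}\iint_{T(B')}|F|^2\,dy\frac{dt}{t} \;\leq\; \frac{|3B'|}{|B'|}\cdot\frac{1}{|3B'|}\iint_{T(3B')}|F|^2\,dy\frac{dt}{t} \;\leq\; C_d\,\lambda^2,
\]
where $C_d$ depends only on the doubling constant of the measure. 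Taking $K_0:=\sqrt{C_d}$, this contradicts the lower bound $K^2\lambda^2$ as soon as $K\geq K_0$, so the bad case cannot occur.

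I do not anticipate any real obstacle: the argument reduces to the dichotomy above together with one triangle inequality and one application of doubling. In fact the proof delivers the stronger pointwise inequality $\mathcal{C}(\chi_{T(3B)}F)(x)>K\lambda$, so the factor $1/K_0$ on the right-hand side of the statement is only a convenient slack built into the formulation — presumably so that the same $K_0$ can be reused in later good-$\lambda$ inequalities in the Appendix.
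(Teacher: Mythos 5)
Your proof is correct, but it is organized differently from the paper's. The paper first establishes the pointwise comparison $\mathcal{C}F\leq K_0\,\mathcal{C}_cF$ with the \emph{centered} maximal function $\mathcal{C}_cF(x)=\sup_{t>0}\frac{1}{V(x,t)}\int_{T(B(x,t))}|F|^2\,dy\frac{dt}{t}$ (this is where doubling and the constant $K_0$ enter); from $\mathcal{C}F(x)>K\lambda$ it then extracts a ball $B(x,t)$ \emph{centered at} $x$ with large average, observes that $\bar{x}\notin B(x,t)$ (otherwise $\mathcal{C}F(\bar{x})$ would exceed $\lambda$), hence $t<2r$ and $B(x,t)\subset 3B$, which gives the conclusion with the loss $K/K_0$. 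You instead keep an arbitrary ball $B'\ni x$ with average $>K^2\lambda^2$ and eliminate the case $B'\not\subset 3B$ by contradiction: your triangle-inequality computation gives $r<r'$ and $B\subset 3B'$, so $\bar{x}\in 3B'$, and then $\mathcal{C}F(\bar{x})\leq\lambda$ applied to the ball $3B'$, together with one application of doubling ($|3B'|\leq C_d|B'|$) and the monotonicity $T(B')\subset T(3B')$, caps the average over $T(B')$ by $C_d\lambda^2$, contradicting the largeness once $K\geq\sqrt{C_d}=:K_0$. Both arguments use the same two ingredients (doubling after tripling a ball, and the control of $\mathcal{C}F$ at the good point $\bar{x}$), but yours bypasses the centered maximal function and delivers the stronger conclusion $\mathcal{C}(\chi_{T(3B)}F)(x)>K\lambda$, with $K_0$ appearing only as a threshold on $K$ rather than as a loss in the output level; the paper's route isolates the reusable comparison $\mathcal{C}\leq K_0\mathcal{C}_c$ at the price of the factor $1/K_0$ in the stated inclusion. (Both you and the paper implicitly use the normalized form $\mathcal{C}F(x)=\sup_{B\ni x}\bigl(\frac{1}{|B|}\int_{T(B)}|F|^2\,dy\frac{dt}{t}\bigr)^{1/2}$, the $\frac{1}{|B|}$ being inadvertently dropped in the displayed definition; this is consistent with how $\mathcal{C}$ is used throughout.)
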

\begin{proof}
We first prove that

\begin{equation}\label{C_c}
\mathcal{C}F\leq K_0 \mathcal{C}_cF,
\end{equation}
where $\mathcal{C}_c$ is the centered maximal function defined by

$$\mathcal{C}_cF(x)=\sup_{t>0}\frac{1}{V(x,t)}\int_{T(B(x,t))}|F(t,y)|^2\,dy\frac{dt}{t}.$$
For this, we notice that if $x\in B(x_0,t)$, then $B(x_0,t)\subset B(x,3t)$. Thus, using \eqref{D},

$$\begin{array}{rcl}
\frac{1}{|B(x_0,t)|}\int_{T(B(x_0,t))} |F(t,y)|^2 \,dy\frac{dt}{t}&\leq& \frac{1}{|B(x_0,t)|}\int_{T(B(x,3t))} |F(t,y)|^2 \,dy\frac{dt}{t}\\\\
&\leq& K_0 \frac{1}{|B(x,3t)|}\int_{T(B(x,3t))} |F(t,y)|^2 \,dy\frac{dt}{t}.\\\\
&\leq& K_0 \mathcal{C}_cF(x).
\end{array}$$
Taking the supremum with respect to the set of balls $B(x_0,t)$ containing $x$, we obtain \eqref{C_c}. Now, let $x\in B$ such that $\mathcal{C}F(x)>K\lambda$. By \eqref{C_c}, there is $t>0$ such that

\begin{equation}\label{CF_lambda}
\frac{1}{V(x,t)}\int_{T(B(x,t))}|F(t,y)|^2\,dy\frac{dt}{t}>\frac{K}{K_0}\lambda.
\end{equation}
Furthermore, $\bar{x}\notin B(x,t)$ since $\mathcal{C}F(\bar{x})\leq \lambda\leq \frac{K}{K_0}\lambda$. Therefore, $B(x,t)\subset 3B$. Consequently,

$$\frac{1}{V(x,t)}\int_{T(B(x,t))} |F(t,y)|^2\,dy\frac{dt}{t}=\frac{1}{V(x,t)}\int_{T(B(x,t))} |\chi_{T(3B)}F(t,y)|^2\,dy\frac{dt}{t},$$
which, together with \eqref{CF_lambda}, yields that $\mathcal{C}(\chi_{T(3B)}F)(x)>\frac{K}{K_0}\lambda$.

\end{proof}
We continue with the following good-$\lambda$ inequalities, analogue of Proposition 3.2 in \cite{AC} :

\begin{Lem}[Good-$\lambda$ inequalities]\label{good-lam}

Assume that for every ball $B$, one can find $G_B$ and $H_B$ such that

$$F= G_B+H_B\qquad \mbox{ a.e. on }T(B),$$
and satisfying

$$\left(\frac{1}{|B|}\int_B\left\{\mathcal{A}(\chi_{T(B)}H_B)\right\}^p\right)^{1/p}\leq C\left(\inf_{x\in B}\mathcal{C}F(x)+\inf_{x\in B}G(x)\right),$$
and

$$\left(\frac{1}{|B|}\int_{B}\left\{\mathcal{A}\left(\chi_{T(B)}G_B\right)\right\}^2\right)^{1/2}\leq C\inf_{x\in B}G(x).$$
Then for every $\lambda>0$, $K>K_0$ and $\gamma\leq 1$,

$$\left|\left\{\mathcal{C}F>K\lambda,\,G\leq\gamma \lambda\right\}\right|\leq C\left(\frac{1}{K^p}+\frac{\gamma^2}{K^2}\right)\left|\left\{\mathcal{C}F>\lambda\right\}\right|,$$
provided that $\left\{\mathcal{C}F>\lambda\right\}$ is a proper subset of $M$.

\end{Lem}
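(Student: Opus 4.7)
The plan is to follow the classical good-$\lambda$ mechanism applied to a Whitney-type covering of the open proper level set $\Omega_\lambda:=\{\mathcal{C}F>\lambda\}$ (open by lower semi-continuity of $\mathcal{C}F$, which is a supremum over balls). Since $\Omega_\lambda\neq M$, I can pick a family $\{B_i\}$ of balls with bounded overlap, contained in $\Omega_\lambda$, such that $\Omega_\lambda=\bigcup_iB_i$ and $3B_i\cap\{\mathcal{C}F\leq\lambda\}\neq\emptyset$ for every $i$; such a decomposition is standard in any space of homogeneous type. I then set $\mathcal{B}_i:=3B_i$ and $B'_i:=3\mathcal{B}_i=9B_i$; by doubling, both enlarged families still have bounded overlap, and $\sum_i|B_i|\leq C|\Omega_\lambda|$.

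Fix $i$. If $\mathcal{B}_i\cap\{G\leq\gamma\lambda\}=\emptyset$, then $\mathcal{B}_i\cap\{\mathcal{C}F>K\lambda,\,G\leq\gamma\lambda\}=\emptyset$ and there is nothing to estimate; otherwise $\inf_{B'_i}G\leq\gamma\lambda$, while the Whitney construction also yields $\inf_{B'_i}\mathcal{C}F\leq\lambda$. I then apply the hypothesis of the lemma with the ball $B'_i$ to obtain a splitting $F=G_{B'_i}+H_{B'_i}$ on $T(B'_i)$ with
$$\left(\frac{1}{|B'_i|}\int_{B'_i}\mathcal{A}(\chi_{T(B'_i)}H_{B'_i})^p\right)^{1/p}\leq C\lambda,\qquad\left(\frac{1}{|B'_i|}\int_{B'_i}\mathcal{A}(\chi_{T(B'_i)}G_{B'_i})^2\right)^{1/2}\leq C\gamma\lambda.$$

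Next I invoke Lemma \ref{loc} on $\mathcal{B}_i$, using a point $\bar{x}\in\mathcal{B}_i$ with $\mathcal{C}F(\bar{x})\leq\lambda$ supplied by the Whitney construction: for $K\geq K_0$,
$$\{\chi_{\mathcal{B}_i}\mathcal{C}F>K\lambda\}\subset\{\mathcal{C}(\chi_{T(B'_i)}F)>(K/K_0)\lambda\}.$$
Subadditivity of $\mathcal{C}$ and the splitting put this inside the union $\{\mathcal{C}(\chi_{T(B'_i)}G_{B'_i})>K\lambda/(2K_0)\}\cup\{\mathcal{C}(\chi_{T(B'_i)}H_{B'_i})>K\lambda/(2K_0)\}$. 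Applying the pointwise estimate $\mathcal{C}(\cdot)\leq C(\mathcal{M}|\mathcal{A}(\cdot)|^2)^{1/2}$ from Theorem \ref{CMS}, and noting that $\mathcal{A}(\chi_{T(B'_i)}(\cdot))$ is supported in $B'_i$, Chebyshev combined with the weak $(1,1)$ bound for $\mathcal{M}$ on the $G$-term (which converts the $L^2$ estimate above into an $L^1$ bound on its square) and the strong $(p/2,p/2)$ bound for $\mathcal{M}$ on the $H$-term (valid since $p>2$) gives, after using $|B'_i|\leq C|B_i|$ by doubling,
$$|\mathcal{B}_i\cap\{\mathcal{C}F>K\lambda,\,G\leq\gamma\lambda\}|\leq C|B_i|\left(\frac{1}{K^p}+\frac{\gamma^2}{K^2}\right).$$

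Summing over $i$ and using $\{\mathcal{C}F>K\lambda\}\subset\Omega_\lambda\subset\bigcup_i\mathcal{B}_i$ together with $\sum_i|B_i|\leq C|\Omega_\lambda|$ yields the claimed good-$\lambda$ inequality. The main technical point is the scale-bookkeeping between the three rings $B_i\subset\mathcal{B}_i\subset B'_i$: the localization lemma forces the enlargement $B\mapsto 3B$, and the splitting hypothesis must in turn be applied to that further enlargement, so the Whitney balls must be inflated twice before both tools can be combined; all these enlargements are absorbed by doubling and by the bounded-overlap property inherited from the Whitney family.
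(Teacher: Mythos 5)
Your proposal is correct and follows essentially the same route as the paper's proof: a Whitney covering of $\{\mathcal{C}F>\lambda\}$, the localization Lemma \ref{loc} applied on the enlarged Whitney balls, the splitting hypothesis invoked on the further enlargement $T(3\cdot)$ with the infima controlled by $\lambda$ and $\gamma\lambda$, and then Chebyshev via \eqref{tent_max} with the weak $(1,1)$ bound for the $G$-part and the $(p/2,p/2)$ bound for the $H$-part, all absorbed by doubling and bounded overlap. The only cosmetic differences (fixing the Whitney dilation constant to $3$, and using the strong rather than weak $(p/2,p/2)$ maximal bound) do not affect the argument.
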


\begin{proof}
Let $E_\lambda=\{\mathcal{C}F>\lambda\}$. It is a proper, open subset of $M$. We write a Whitney covering for $E_\lambda$:

$$E_\lambda=\bigcup_i B_i,$$
that is the $B_i$ are balls with the finite intersection property, and there is a constant $c>1$ with the following property: for every $i$, there is $\bar{x}_i\in cB_i$ such that $\bar{x}_i\notin E_\lambda$. Define

$$B_\lambda=\{\mathcal{C}F>K\lambda,\,G\leq \gamma \lambda\}.$$
Then

$$|B_\lambda|\leq \sum_i|B_\lambda\cap B_i|\leq \sum_i|B_\lambda\cap cB_i|.$$
Assume that $B_\lambda\cap cB_i\neq \emptyset$, and let $\bar{y}_i\in B_\lambda\cap cB_i$. By the localisation lemma (Lemma \ref{loc}), 

$$|B_\lambda\cap cB_i|\leq \left|\left\{\mathcal{C}(\chi_{T(3cB_i)}F)>\frac{K}{K_0}\lambda\right\}\right|.$$
Denote $G_i:=G_{3cB_i}$ and $H_i:=H_{3cB_i}$. Then, since $F=G_i+H_i$ on $T(3cB_i)$, there holds

$$\begin{array}{rcl}
\left|\left\{\mathcal{C}(\chi_{T(3cB_i)}F)>\frac{K}{K_0}\lambda\right\}\right|&\leq& \left|\left\{\mathcal{C}(\chi_{T(3cB_i)}G_i)>\frac{K}{2K_0}\lambda\right\}\right|\\\\
&&+\left|\left\{\mathcal{C}(\chi_{T(3cB_i)}H_i)>\frac{K}{2K_0}\lambda\right\}\right|.
\end{array}$$
Let us first estimate the term $\left|\left\{\mathcal{C}(\chi_{T(3cB_i)}G_i)>\frac{K}{2K_0}\lambda\right\}\right|$: using inequality \eqref{tent_max} and the weak $(1,1)$ type of the Hardy-Littlewood maximal function, there holds:

$$\begin{array}{rcl}
\left|\left\{\mathcal{C}(\chi_{T(3cB_i)}G_i)>\frac{K}{2K_0}\lambda\right\}\right|&\leq& \left|\left\{\left(\mathcal{M}\left(\mathcal{A}(\chi_{T(3cB_i)}G_i)\right)^2\right)^{1/2}>C\frac{K}{2K_0}\lambda\right\}\right|\\\\
&\leq& \frac{C}{(K\lambda)^2}||\mathcal{A}(\chi_{T(3cB_i)}G_i)||_2^2 
\end{array}$$
Using the hypothesis on $G_i$, the doubling property \eqref{D}  and the fact that $G(\bar{y}_i)\leq \gamma \lambda$, we get

$$\begin{array}{rcl}
\left|\left\{\mathcal{C}(\chi_{T(3cB_i)}G_i)>\frac{K}{2K_0}\lambda\right\}\right|&\leq& \frac{C}{(K\lambda^2)}|3cB_i|\inf_{x\in 3cB_i} G^2(x)\\\\
&\leq& C\frac{\gamma^2}{K^2}|B_i|.
\end{array}$$
Now, let us estimate $\left|\left\{\mathcal{C}(\chi_{T(3cB_i)}H_i)>\frac{K}{2K_0}\lambda\right\}\right|$: using successively inequality \eqref{tent_max}, the weak $(\frac{p}{2},\frac{p}{2})$ type of the Hardy-Littlewood maximal function $\mathcal{M}$, and the hypothesis on $H_i$, we obtain:

$$\begin{array}{rcl}
\left|\left\{\mathcal{C}(\chi_{T(3cB_i)}H_i)>\frac{K}{2K_0}\lambda\right\}\right|&\leq& \left|\left\{\left(\mathcal{M}\left(\mathcal{A}(\chi_{T(3cB_i)}H_i)\right)^2\right)^{1/2}> C\frac{K}{2K_0}\lambda\right\}\right|\\\\
&\leq& \frac{C}{(K\lambda)^p}\Bint_{T(3cB_i)}\left|\mathcal{A}(\chi_{T(3cB_i)}H_i)\right|^p\\\\
&\leq&\frac{C}{(K\lambda)^p}|B_i|\left(\frac{1}{|3cB_i|}\Bint_{T(3cB_i)}\left|\mathcal{A}(\chi_{T(3cB_i)}H_i)\right|^p\right)\\\\
&\leq&\frac{C}{(K\lambda)^p}|B_i|\left(\inf_{x\in 3cB_i}\mathcal{C}F(x)+\inf_{x\in 3cB_i}G(x)\right)^{p/2}\\\\
&\leq& \frac{C}{(K\lambda)^p}|B_i|\left(\mathcal{C}F(\bar{x}_i)+G(\bar{y}_i)\right)^p\\\\
&\leq& \frac{C}{K^p}|B_i|.
\end{array}$$
Finally, we get, using the finite intersection property of the balls $B_i$,

$$|B_\lambda|\leq C\left(\frac{\gamma^2}{K^2}+\frac{1}{K^p}\right)\sum_i|B_i|\leq C\left(\frac{\gamma^2}{K^2}+\frac{1}{K^p}\right) |E_\lambda|,$$
and the proof of Lemma \ref{good-lam} is complete.

\end{proof}
\noindent\textit{End of the proof of Proposition \ref{good}:}\\\\
Define

$$\Phi(t)=q\int_0^t \lambda^{q-1}|\{\mathcal{C}F>\lambda\}|\,d\lambda.$$
By \eqref{tent_max}, 

$$|\{\mathcal{C}F>\lambda\}|\leq |\{\left(\mathcal{M}(\mathcal{A}F^2\right)^{1/2}>C\lambda\}|,$$
and by the weak $(1,1)$ type of $\mathcal{M}$,

$$\begin{array}{rcl}
|\{\mathcal{C}F>\lambda\}|&\leq& \frac{C}{\lambda^2}||\mathcal{A}F||_2^2\\\\
&\leq& \frac{C}{\lambda^2}||F||_{T^{2,2}}^2\\\\
&<&\infty
\end{array}$$
Since \eqref{D} implies that $M$ has infinite volume, necessarily $\{\mathcal{C}F>\lambda\}$ is a proper subset of $M$. Therefore, we can apply Lemma \ref{good-lam}, and obtain by integration that

$$\Phi(Kt)\leq C K^q \left(\frac{1}{K^p}+\frac{\gamma^2}{K^2}\right)\Phi(t)+\left(\frac{K}{\gamma}\right)^q||G||_q^q.$$
Since $q<p$, one can choose $K$ large enough and $\gamma$ small enough so that

$$K^q \left(\frac{1}{K^p}+\frac{\gamma^2}{K^2}\right)\leq \frac{1}{2}.$$
Hence, for this choice, we get that for all $t\geq0$,

$$\Phi(Kt)\leq \frac{1}{2}\Phi(t)+\left(\frac{K}{\gamma}\right)^q||G||_q^q.$$
By an easy iteration, this implies that for every $t\geq0$,

$$\Phi(t)\leq C ||G||_q^q.$$
Since $\lim_{t\to\infty}\Phi(t)=||\mathcal{C}F||_q^q$, we get that

$$||\mathcal{C}F||_q\leq C ||G||_q.$$
But by Theorem \ref{CMS}, since $2<q$ there holds that

$$||\mathcal{C}F||_q\simeq ||F||_{T^{2,q}},$$
hence the result of Proposition \ref{good}.

\cqfd

\begin{Rem}

{\em
On $T^{2,q}$ for $q>2$, there are two ``maximal" functions, namely $\mathcal{C}$ and $\mathcal{G}:F\mapsto \left(\mathcal{M}(|\mathcal{A}F|^2)\right)^{1/2}$, with the property that

$$||F||_{T^{2,q}}\simeq ||\mathcal{C}F||_q\simeq ||\mathcal{G}F||_{q/2}.$$
The proof of Proposition \ref{good} with $\mathcal{G}$ instead of $\mathcal{C}$ works fine, except at one (crucial) place: the localisation lemma (Lemma \ref{loc}), which is not true for $\mathcal{G}$. This explains our choice of $\mathcal{C}$ in the proof of Proposition \ref{good}.
}

\end{Rem}

\end{document}